\journalname{Information Geometry}
\newtheorem{condition}[theorem]{Condition}
\begin{document}

\title{Logarithmic divergences from optimal transport and R\'{e}nyi geometry%\thanks{Grants or other notes
%about the article that should go on the front page should be
%placed here. General acknowledgments should be placed at the end of the article.}
}
%\subtitle{Do you have a subtitle?\\ If so, write it here}

\titlerunning{Logarithmic divergences and R\'{e}nyi geometry}        % if too long for running head

\author{Ting-Kam Leonard Wong
}

%\authorrunning{Short form of author list} % if too long for running head

\institute{Ting-Kam Leonard Wong \at
              University of Toronto\\
              \email{tkl.wong@utoronto.ca}           %  \\
%             \emph{Present address:} of F. Author  %  if needed
}

\date{Received: date / Accepted: date}
% The correct dates will be entered by the editor

\maketitle

\begin{abstract}
Divergences, also known as contrast functions, are distance-like quantities defined on manifolds of non-negative or probability measures. Using the duality in optimal transport, we introduce and study the one-parameter family of $L^{(\pm \alpha)}$-divergences. It includes the Bregman divergence corresponding to the Euclidean quadratic cost, and the $L$-divergence introduced by Pal and the author in connection with portfolio theory and a logarithmic cost function. They admit natural generalizations of exponential family that are closely related to the $\alpha$-family and $q$-exponential family. In particular, the $L^{(\pm \alpha)}$-divergences of the corresponding potential functions are R\'{e}nyi divergences. Using this unified framework we prove that the induced geometries are dually projectively flat with constant sectional curvatures, and a generalized Pythagorean theorem holds true. Conversely, we show that if a statistical manifold is dually projectively flat with constant curvature $\pm \alpha$ with $\alpha > 0$, then it is locally induced by an $L^{(\mp \alpha)}$-divergence. We define in this context a canonical divergence which extends the one for dually flat manifolds.

\keywords{Information geometry \and Statistical manifold \and Optimal transport \and Exponential concavity and convexity \and Spaces of constant curvature \and Projective flatness \and R\'{e}nyi divergence, $\alpha$-divergence}
% \PACS{PACS code1 \and PACS code2 \and more}
% \subclass{MSC code1 \and MSC code2 \and more}
\end{abstract}

\section{Introduction} \label{sec:intro}
In this paper we study geometric properties of statistical manifolds defined by solutions to some optimal transport problems. We show that they characterize dually projectively flat manifolds with constant curvatures and provide natural geometries to generalizations of exponential family. Before stating our main results let us describe the background and motivations of our study.

\subsection{Background}
Information geometry studies manifolds of probability distributions and measures using geometric ideas. For modern introductions to its theory and applications we refer the reader to the recent monographs \cite{A16} and \cite{AJLS17}. Typically the geometry is induced by a divergence which is a distance-like quantity $\mathbf{D}\left[q : p \right] \geq 0$ defined on the underlying manifold $\mathcal{M}$. Of particular importance is the Bregman divergence. Given a differentiable concave function $\varphi(\xi)$ defined on a convex domain $\Omega$ in $\mathbb{R}^d$, the Bregman divergence of $\varphi$ is defined for $\xi, \xi' \in \Omega$ by
\begin{equation} \label{eqn:Bregman.divergence}
\mathbf{D}^{(0+)}\left[ \xi : \xi'\right] = D \varphi(\xi') \cdot ( \xi - \xi' ) - \left( \varphi(\xi) - \varphi(\xi')\right),
\end{equation}
where $D$ is the Euclidean gradient and $a \cdot b$ is the dot product. (The meaning of the superscript $0+$ will become clear in Section \ref{sec:summary}.) For example, if $\mathcal{M}$ is an exponential family of probability densities where $\xi$ is the natural parameter, the relative entropy, also known as the Kullback-Leibler divergence,
%\[
%D\left[ \theta : \theta'\right] = \int p (x ; \theta') \log \frac{ p(x ; \theta')}{p(x ; \theta)} dx
%\]
can be expressed as the Bregman divergence of the cumulant generating function $\varphi(\xi)$ (which is convex, so in \eqref{eqn:Bregman.divergence} we may consider $-\varphi$ or use $\mathbf{D}^{(0-)}$ to be defined in Section \ref{sec:log.divergence}). The differential geometry of Bregman divergence was first studied by Nagaoka and Amari in \cite{NA82}.

Following the general framework established by Eguchi \cite{E83, E92}, the geometry induced on $\mathcal{M}$ by a given divergence $\mathbf{D}\left[ \cdot : \cdot\right]$ consists of a Riemannian metric $g$ and a dual pair $(\nabla, \nabla^*)$ of torsion-free affine connections (see Definition \ref{def:dualistic.structure}). We call the quadruplet $(\mathcal{M}, g, \nabla, \nabla^*)$ a dualistic structure or a statistical manifold. The Riemannian metric captures the quadratic approximation of $\mathbf{D}\left[ q : p \right]$ when $q \approx p$, whereas the two affine connections capture higher order local properties and allow us to define primal and dual geodesics that are in some sense compatible with the divergence. In contrast with usual Riemannian geometry where the Levi-Civita connection is the canonical connection defined by the metric, in information geometry we usually use a pair of connections $(\nabla, \nabla^*)$ to quantify the  asymmetry of the divergence: generally we have $\mathbf{D}\left[p : q\right] \neq \mathbf{D}\left[q : p \right]$. When the divergence is symmetric (for example when $\mathbf{D}\left[q : p\right] = \frac{1}{2} d(p, q)^2$ where $d$ is a metric) both $\nabla$ and $\nabla^*$ coincide with the Levi-Civita connection. From this geometric perspective, the Bregman divergence is fundamental in the sense that it is the canonical divergence which generates a dually flat geometry, i.e., both the primal and dual connections $\nabla$ and $\nabla^*$ have zero curvature (see for example \cite[Section 6.6]{A16} and \cite[Section 4.2]{AJLS17}; this is also a limiting case of Theorem \ref{thm:main3}). Moreover, the Bregman divergence satisfies a generalized Pythagorean theorem which enables explicit computation and geometric interpretation of projections based on the divergence. By now these results are well-known and numerous applications can be found in the two monographs cited above.

Motivated by portfolio theory, in a series of papers \cite{PW13, PW14, W15, PW16, W17, PW18} we introduced and studied the $L$-divergence ($L$ stands for logarithmic) defined by
\begin{equation} \label{eqn:L.1.divergence}
\mathbf{D}^{(1)}\left[ \xi : \xi'\right] = \log (1 +  D \varphi(\xi') \cdot ( \xi - \xi' )) - \left( \varphi(\xi) - \varphi(\xi')\right),
\end{equation}
where now the potential function $\varphi$ is exponentially concave, i.e, $\Phi = e^{\varphi}$ is concave. Being the logarithm of the concave function $\Phi$, $\varphi$ is itself concave and the $L$-divergence has a logarithmic correction term that captures the extra concavity. An important example of $L$-divergence is the ``excess growth rate" defined for probability vectors $p, q$ by
\begin{equation} \label{eqn:excess.growth.rate}
\mathbf{D}\left[ q : p \right] = \log \left( \sum_{i = 0}^d \frac{1}{d + 1} \frac{q_i}{p_i} \right) - \sum_{i = 0}^d \frac{1}{d + 1} \log \frac{q_i}{p_i},
\end{equation}
where $\varphi(p) = \sum_{i = 0}^{d} \frac{1}{d + 1} \log p_i$. Exponentially concave functions appear naturally in many recent applications in analysis, probability and statistics. In particular, let us mention the paper \cite{EKS15} on Bochner's inequality on metric measure spaces. Other examples can be found in the references of \cite{PW16}.

In \cite{PW16} we established the following results where the underlying manifold is the open unit simplex:
\begin{itemize}
\item[(i)] The dualistic structure induced by the $L$-divergence \eqref{eqn:L.1.divergence} is dually projectively flat with constant curvature $-1$, and the generalized Pythagorean theorem (which has a financial interpretation in terms of optimal trading frequency) holds true.
\item[(ii)] The $L$-divergence can be expressed in terms of the optimal transport map with respect to a logarithmic cost function, whereas the Bregman divergence corresponds to the classical quadratic cost. Furthermore, displacement interpolations for these transport problems are related to the dual geodesics of the induced dualistic structure.
\end{itemize}
It is natural to ask if there is a unified framework that covers both Bregman and $L$-divergences. Moreover, can we characterize statistical manifolds that satisfy these (very strong) geometric properties? On the other hand, Amari and Nagaoka \cite{AN02} considered $\alpha$-families of probability distributions and the corresponding $\alpha$-divergences that generalize the exponential family and the Kullback-Leibler divergence. We will see that these questions are intimately related.

The connection between optimal transport and information geometry is a recent topic that has started to receive serious attention. For example, the recent paper \cite{AKO17} studies the manifold of entropy-relaxed optimal transport plans and defines a divergence which interpolates between the Kullback-Leibler divergence and the Wasserstein metric (optimal transport cost). Dynamic approach are considered in \cite{CPSV15} and the recent papers \cite{LM18, CL18}. Also see \cite{P17} which builds on the ideas of \cite{PW16} and embeds a large family of optimal transport problems in a generalized simplex.

\subsection{Summary of main results} \label{sec:summary}
The key objects of study in this paper are the $L^{(\pm \alpha)}$-divergences defined for $\alpha > 0$. The $L^{(\alpha)}$-divergence is defined by
\begin{equation} \label{eqn:L.alpha.divergence}
\mathbf{D}^{(\alpha)} \left[ \xi : \xi'\right] = \frac{1}{\alpha} \log (1 +  \alpha \nabla \varphi(\xi') \cdot ( \xi - \xi' )) - \left( \varphi(\xi) - \varphi(\xi')\right),
\end{equation}
where the function $\varphi$ is $\alpha$-exponentially concave, i.e., $e^{\alpha \varphi}$ is concave. To cover the case of positive curvature, we will also consider a local $L^{(-\alpha)}$-divergence $\mathbf{D}^{(-\alpha)}$ where $\varphi$ is $\alpha$-exponentially convex, i.e., $e^{\alpha \varphi}$ is convex. When $\alpha = 1$ we have the $L^{(1)}$-divergence \eqref{eqn:L.1.divergence}, and when $\alpha \rightarrow 0^+$ it reduces to the Bregman divergence \eqref{eqn:Bregman.divergence} of a concave function ($\mathbf{D}^{(0-)}$ is the Bregman divergence of a convex function). Based on financial arguments, the $L^{(\alpha)}$-divergence was introduced recently in \cite{W17} as the canonical interpolation between the Bregman and $L$-divergences. In this paper we consider also the $L^{(-\alpha)}$-divergence and show that they are of fundamental importance. We note that there exists other natural extrapolations of the Bregman divergence.\footnote{The author thanks an anonymous referee for pointing this out.} For example, Zhang \cite{Z04} (also see \cite{Z05, Z15, NZ18}) introduced a parameterized family $\mathcal{D}^{(\alpha)}_{\Phi}$ of divergences for a given convex function $\Phi$, which includes the Bregman and $\alpha$-divergences among others, and introduced the concepts of referential duality and representational duality. The biduality of divergence also plays a role in this paper (see Corollary \ref{cor:c.divergence.duality}).

While we worked on the open unit simplex in our previous paper \cite{PW16}, here we consider a general open convex domain $\Omega$ in $\mathbb{R}^d$. This answers a question asked in \cite[Section 1]{PW16} about extending the results to general domains. By a translation if necessary, we may assume that $0 \in \overline{\Omega}$ (this is used for the normalization of the function $c^{(\alpha)}$ in \eqref{eqn:cost.intro}). Nevertheless, we note that exponential concavity/convexity impose certain restrictions on the domains of $\varphi$ and the $L^{(-\alpha)}$-divergence.

In Sections \ref{sec:c.divergence} and \ref{sec:log.divergence} we prove the following result which connects the $L^{(\pm \alpha)}$-divergence with optimal transport and generalizes the classical Legendre duality for Bregman divergence.

\begin{theorem} \label{thm:main1}
For $\alpha > 0$, consider the function
\begin{equation} \label{eqn:cost.intro}
c^{(\alpha)}(x, y) = \frac{1}{\alpha} \log \left(1 + \alpha x \cdot y \right),
\end{equation}
where $x \cdot y$ is the Euclidean dot product. Then the $L^{(\alpha)}$-divergence \eqref{eqn:L.alpha.divergence} admits the self-dual representation
\begin{equation} \label{eqn:canonical.divergence.intro}
\mathbf{D}^{(\alpha)} \left[ \xi : \xi'\right] = c^{(\alpha)}(\xi, \eta') - \varphi(\xi) - \psi(\eta'),
\end{equation}
where $\psi$ is the $\alpha$-conjugate of $\varphi$ and is (locally) $\alpha$-exponentially concave, and $\eta = D^{(\alpha)} \varphi(\xi)$ is the $\alpha$-gradient which generalizes the Legendre transformation.

Similarly, when $\varphi$ is $\alpha$-exponentially convex, the $L^{(-\alpha)}$-divergence has the self-dual representation
\begin{equation} \label{eqn:canonical.divergence.intro2}
\mathbf{D}^{(-\alpha)} \left[ \xi : \xi'\right] = \varphi(\xi) + \psi(\eta') -  c^{(\alpha)}(\xi, \eta'),
\end{equation}
where now $\varphi$ and $\psi$ are $\alpha$-exponentially convex.
\end{theorem}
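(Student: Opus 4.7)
The plan is to mimic the Legendre-duality proof of the self-dual formula for the Bregman divergence, with the Euclidean inner product replaced throughout by the cost $c^{(\alpha)}$. First I would define the $\alpha$-gradient $\eta = D^{(\alpha)}\varphi(\xi)$ as the stationary point of the generalized Legendre/$c^{(\alpha)}$-transform: differentiating the map $\xi\mapsto c^{(\alpha)}(\xi,\eta)-\varphi(\xi)$ gives the first-order relation $\nabla\varphi(\xi)=\eta/(1+\alpha\,\xi\cdot\eta)$, which solves to
\begin{equation*}
D^{(\alpha)}\varphi(\xi)\;=\;\frac{\nabla\varphi(\xi)}{1-\alpha\,\xi\cdot\nabla\varphi(\xi)}.
\end{equation*}
I would then set the $\alpha$-conjugate to be $\psi(\eta):=c^{(\alpha)}(\xi,\eta)-\varphi(\xi)$ with $\xi=(D^{(\alpha)}\varphi)^{-1}(\eta)$. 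Once these definitions are installed, \eqref{eqn:canonical.divergence.intro} becomes an algebraic identity.

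The computational core is short. Put $\eta'=D^{(\alpha)}\varphi(\xi')$; by construction $\psi(\eta')=\tfrac{1}{\alpha}\log(1+\alpha\,\xi'\cdot\eta')-\varphi(\xi')$. Substituting this into the right-hand side of \eqref{eqn:canonical.divergence.intro} and comparing with \eqref{eqn:L.alpha.divergence} reduces the claim to the single multiplicative identity
\begin{equation*}
\frac{1+\alpha\,\xi\cdot\eta'}{1+\alpha\,\xi'\cdot\eta'}\;=\;1+\alpha\,\nabla\varphi(\xi')\cdot(\xi-\xi').
\end{equation*}
Cross-multiplying and using the stationarity relation $\eta'=\nabla\varphi(\xi')(1+\alpha\,\xi'\cdot\eta')$ collapses the right-hand side to $1+\alpha\,\xi\cdot\eta'$, verifying the identity. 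Setting $\xi=\xi'$ also produces the normalization $e^{\alpha(\varphi(\xi')+\psi(\eta'))}=1+\alpha\,\xi'\cdot\eta'$, which plays the role of the Fenchel--Young equality at the Legendre point.

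It remains to justify the regularity claims on $\psi$. Under strict $\alpha$-exponential concavity of $\varphi$ I would compute the Jacobian of $\xi\mapsto D^{(\alpha)}\varphi(\xi)$ in closed form (it factors through the Hessian of $e^{\alpha\varphi}$) and verify invertibility, so that the map is a local diffeomorphism and $\psi$ is smooth on its image. To prove $\psi$ is \emph{locally} $\alpha$-exponentially concave, I would start from the identity $e^{\alpha\psi(\eta)}=(1+\alpha\,\xi(\eta)\cdot\eta)\,e^{-\alpha\varphi(\xi(\eta))}$, differentiate twice in $\eta$, and use the envelope/stationarity condition to eliminate the derivatives of $\xi(\eta)$, arriving at a Hessian expression that is negative semidefinite; the qualifier ``locally'' is forced because the image of $D^{(\alpha)}\varphi$ need not be convex.

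The $L^{(-\alpha)}$ case is the sign-reversed mirror of the above: the same stationary equation produces the same $\alpha$-gradient, and the same cross-multiplication yields \eqref{eqn:canonical.divergence.intro2} once the signs in the definition of $\psi$ are adjusted. The main obstacle, as I see it, is not the algebraic identity but the regularity bookkeeping in the convex case: exponential convexity places no \emph{a priori} bound on $\alpha\,\xi\cdot\nabla\varphi(\xi)$, the denominator $1-\alpha\,\xi\cdot\nabla\varphi$ need not be well-signed globally, and one must restrict to a neighborhood where $D^{(\alpha)}\varphi$ is a genuine diffeomorphism and the conjugate $\psi$ inherits $\alpha$-exponential convexity.
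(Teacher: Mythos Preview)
Your proposal is correct and the core algebra matches the paper's: both reduce the self-dual formula to the ratio identity
\[
\frac{1+\alpha\,\xi\cdot\eta'}{1+\alpha\,\xi'\cdot\eta'}=1+\alpha\,D\varphi(\xi')\cdot(\xi-\xi'),
\]
verified via the stationarity relation $\eta'=(1+\alpha\,\xi'\cdot\eta')\,D\varphi(\xi')$ (the paper records this as the ``nice identity'' $1+\alpha\,\xi'\cdot\eta'=(1-\alpha\,D\varphi(\xi')\cdot\xi')^{-1}$).

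Where you and the paper differ is in the regularity bookkeeping. For the local $\alpha$-exponential concavity of $\psi$ you propose a direct second-derivative computation of $e^{\alpha\psi}$ with envelope cancellations; the paper instead notes that the self-dual formula, read backwards, exhibits $\mathbf{D}^{(\alpha)}[\xi:\xi']$ as the $L^{(\alpha)}$-divergence of $\psi$ with arguments swapped, so $-D^2\psi-\alpha(D\psi)(D\psi)^\top$ is simply the induced Riemannian metric expressed in the dual coordinates $\eta$, hence automatically positive definite. Your route is computational; theirs recycles the information-geometric interpretation and avoids the Hessian calculation entirely. The paper also upgrades your local-diffeomorphism statement for $D^{(\alpha)}\varphi$ to \emph{global} injectivity on $\Omega$ by a short divergence argument: if $D^{(\alpha)}\varphi(\xi)=D^{(\alpha)}\varphi(\xi')$ with $\xi\neq\xi'$, one manipulates to $\mathbf{D}^{(\alpha)}[\xi':\xi]+\mathbf{D}^{(\alpha)}[\xi:\xi']=0$, contradicting strict positivity. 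This is worth incorporating, since the local inverse-function-theorem argument you sketch does not by itself give an honest coordinate system on all of $\Omega$.
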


Precise statements corresponding to Theorem \ref{thm:main1} are given in Theorems \ref{thm:L.divergence.as.c.divergence}, \ref{prop:alpha.duality}, \ref{thm:alpha.duality} and \ref{thm:duality.minus.alpha}. The self-dual expressions \eqref{eqn:canonical.divergence.intro} and \eqref{eqn:canonical.divergence.intro2} are motivated by optimal transport which identifies the dual coordinate system $\eta$ as the image of $\xi$ under the optimal transport map. This idea comes from our previous paper \cite{PW16}. %The connection with optimal transport is described in detail in Section \ref{sec:c.divergence}.

Motivated by the relationships between Bregman divergence and exponential family (see \cite{BMDG05}), in Section \ref{sec:q.exponential} we consider generalizations of exponential family that are closely related to the $\alpha$-family and $q$-exponential family \cite{AN02, N10, AO11}. We show that the analogue of the cumulant generating function is $\alpha$-exponentially concave/convex (Propositions \ref{prop:check.exp.concave} and \ref{prop:check.exp.concave2}), and the corresponding $L^{(\pm \alpha)}$-divergence is the R\'{e}nyi divergence (Theorem \ref{thm:renyi}). Moreover, the dual function is the R\'{e}nyi entropy. Our results thus provide a new approach to the geometry of R\'{e}nyi  and $\alpha$-divergences. When $\alpha \rightarrow 0$ we recover the dually flat geometry of exponential family. The case of finite simplex is discussed in Section \ref{sec:alpha.divergence}.

In Sections \ref{sec:L.geometry} and \ref{sec:geometric.consequences} we study the statistical manifold $(\mathcal{M} = \Omega, g, \nabla, \nabla^*)$ induced by the $L^{(\pm \alpha)}$-diverence. Our approach, based on the duality in Theorem \ref{thm:main1}, gives a unified treatment covering both the Bregman and $L$-divergences and simplifies the proofs in \cite{PW16}. The following result summarizes Theorems \ref{prop:projective.flat}, \ref{thm:constant.curvature} and \ref{thm:pyth}.

\begin{theorem} \label{thm:main2}
For $\alpha > 0$, the dualistic structure induced by the $L^{(\pm \alpha)}$-divergence defined on an open convex set $\Omega \subset \mathbb{R}^d$ with $d \geq 2$ is dually projectively flat, has constant sectional curvature $\mp \alpha$, and the generalized Pythagorean theorem holds. (When $\alpha \rightarrow 0^+$ we reduce to the dually flat case.)
\end{theorem}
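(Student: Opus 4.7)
The plan is to reduce all three assertions---dual projective flatness, constant curvature, and the Pythagorean relation---to direct computation with the self-dual representation in Theorem \ref{thm:main1}. Writing $\mathbf{D}^{(\alpha)}[\xi:\xi'] = c^{(\alpha)}(\xi, \eta') - \varphi(\xi) - \psi(\eta')$ with $\eta' = D^{(\alpha)}\varphi(\xi')$, all Eguchi data $g, \Gamma, \Gamma^*$ can be read off from derivatives of the single cost $c^{(\alpha)}(x,y) = \frac{1}{\alpha}\log(1 + \alpha x\cdot y)$. The symmetry $(\xi,\varphi)\leftrightarrow(\eta,\psi)$ built into this form will automatically turn each primal statement into its dual, which is the main payoff of the optimal-transport representation; the $L^{(-\alpha)}$ case runs in parallel with the roles of concavity and convexity (and the overall signs) exchanged.

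For dual projective flatness I would apply Eguchi's formula $\Gamma_{ij,k} = -\partial_i\partial_j\partial'_k \mathbf{D}[\xi:\xi']|_{\xi'=\xi}$ to the self-dual form, differentiate through the map $\xi \mapsto \eta$, and show that the primal Christoffel symbols in $\xi$ take the projective shape $\Gamma^k_{ij} = \delta^k_i a_j + \delta^k_j a_i$ for a $1$-form $a$ given explicitly by a logarithmic derivative of $1 + \alpha\xi\cdot\eta$. By the classical characterization of projectively flat connections this says $\nabla$-geodesics are Euclidean line segments in $\xi$ up to reparameterization, and the self-dual symmetry then gives the analogous statement for $\nabla^*$ in $\eta$. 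The constant-curvature claim is obtained by feeding these projective Christoffel symbols into the curvature tensor of $\nabla$; the expected outcome is $R_{ijkl} = \mp\alpha(g_{ik}g_{jl} - g_{il}g_{jk})$, and sending $\alpha \to 0^+$ cleanly recovers the dually flat Bregman case.

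For the generalized Pythagorean theorem, applied to three points $\xi_1,\xi_2,\xi_3 \in \Omega$, the self-dual representation together with the point-diagonal identity $\varphi(\xi_2) + \psi(\eta_2) = c^{(\alpha)}(\xi_2,\eta_2)$ (which follows from $\mathbf{D}^{(\alpha)}[\xi_2:\xi_2] = 0$) yields
\[
\mathbf{D}^{(\alpha)}[\xi_1:\xi_2] + \mathbf{D}^{(\alpha)}[\xi_2:\xi_3] - \mathbf{D}^{(\alpha)}[\xi_1:\xi_3] = c^{(\alpha)}(\xi_1,\eta_2) + c^{(\alpha)}(\xi_2,\eta_3) - c^{(\alpha)}(\xi_1,\eta_3) - c^{(\alpha)}(\xi_2,\eta_2),
\]
and the logarithmic structure of $c^{(\alpha)}$ collapses the right-hand side to a single $\log$ that vanishes exactly when $(\xi_1-\xi_2)$ and $(\eta_3-\eta_2)$ satisfy an $\alpha$-weighted inner-product relation at $\xi_2$. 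Unwinding this relation through $g$ and the $\alpha$-gradient identifies it with $g$-orthogonality between the $\nabla$-geodesic $\xi_2\to\xi_1$ (a $\xi$-line) and the $\nabla^*$-geodesic $\xi_2\to\xi_3$ (an $\eta$-line) at $\xi_2$. I expect the main technical burden to be the constant-curvature step: the Christoffel symbols are only projectively zero, the metric depends nontrivially on $\xi$ through $\eta(\xi)$, and one must carefully orchestrate the $\alpha$-exponential concavity/convexity of $\varphi$ together with the precise logarithmic form of $c^{(\alpha)}$ so that the many $\alpha$-dependent contributions in the curvature expansion collapse into the clean constant-curvature identity.
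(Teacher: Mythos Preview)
Your proposal is correct and follows essentially the same route as the paper: explicit Eguchi computations from the self-dual representation yield $\Gamma_{ij}{}^k = -\alpha(\partial_i\varphi\,\delta_j^k + \partial_j\varphi\,\delta_i^k)$ (the paper's cleaner form of your ``logarithmic derivative'' $1$-form), from which projective flatness and the constant-curvature identity follow by direct substitution, while the Pythagorean theorem is reduced to exactly the four-term cost identity you wrote and matched against orthogonality via the mixed inner product $\langle \partial/\partial\xi^i,\partial/\partial\eta^j\rangle$. The only ingredients you leave implicit are the explicit inversion of the metric (needed to pass from $\Gamma_{ijk}$ to $\Gamma_{ij}{}^k$) and the closed form of that mixed inner product, both of which the paper supplies as standalone propositions before assembling the three conclusions.
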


By dual projective flatness, we mean that there exist `affine' coordinate systems ($\xi$ and $\eta$) under which the primal and dual geodesics are straight lines up to time reparameterizations (for the precise technical statement see Definition \ref{def:projective.flatness}). Since the properties in Theorem \ref{thm:main2} are very strong and spaces of constant curvatures play fundamental roles in differential geometry, it is natural to ask if Theorem \ref{thm:main2} has a converse. Indeed there is one and it will be proved in Section \ref{sec:characterize.geometry} (see Theorems \ref{thm:projectively.flat.characterize} and \ref{thm:canonical.divergence}).

\begin{theorem}  \label{thm:main3}
Consider a dualistic structure $(\mathcal{M}, g, \nabla, \nabla^*)$ which is dually projectively flat and has constant curvature $-\alpha < 0$. Then locally there exist affine coordinate systems $\xi$ and $\eta$ for $\nabla$ and $\nabla^*$ respectively, and $\alpha$-exponentially concave functions $\varphi(\xi)$ and $\psi(\eta)$ which satisfy the generalized Fenchel identity
\[
\varphi(\xi) + \psi(\eta) \equiv c^{(\alpha)}(\xi, \eta).
\]
Moreover, the self-dual representation \eqref{eqn:canonical.divergence.intro} defines locally a canonical divergence which induces the given dualistic structure. Analogous statements hold when the curvature is $\alpha > 0$.
\end{theorem}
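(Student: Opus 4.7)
The plan is to invert the construction of Theorem \ref{thm:main2}: given the dualistic data, recover an $\alpha$-exponentially concave potential $\varphi$ whose $L^{(\alpha)}$-divergence reproduces $(g,\nabla,\nabla^*)$ locally. First I would exploit the projective flatness of $\nabla$ and $\nabla^*$ to obtain, locally, coordinate charts $\xi$ and $\eta$ in which $\nabla$- and $\nabla^*$-geodesics are straight lines up to reparameterization (``affine'' in the sense of Definition \ref{def:projective.flatness}). These play the roles of the primal and dual coordinates in Theorem \ref{thm:main1}; the dualistic relation $Xg(Y,Z) = g(\nabla_X Y,Z) + g(Y,\nabla_X^* Z)$ then determines a smooth transition map $\xi \mapsto \eta(\xi)$.

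Differentiating the desired identity \eqref{eqn:canonical.divergence.intro} in $\xi$ at $\xi = \xi'$ and using $\mathbf{D}^{(\alpha)}[\xi:\xi]=0$ yields $D\varphi(\xi) = D_x c^{(\alpha)}(\xi,\eta(\xi)) = \eta/(1+\alpha\,\xi\cdot\eta)$. Accordingly I define $\varphi$ as a path integral of the $1$-form
\[
\omega \;=\; \frac{\eta \cdot d\xi}{1 + \alpha\,\xi\cdot\eta},
\]
which requires verifying $d\omega = 0$. This is where the constant sectional curvature $-\alpha$ enters: writing the curvature identity $R(X,Y)Z = -\alpha(g(Y,Z)X - g(X,Z)Y)$ in the affine $\xi$-chart, in which the Christoffel symbols of $\nabla$ take the projective form $\Gamma_{ij}^k = \delta_i^k \rho_j + \delta_j^k \rho_i$, should collapse to exactly the $(i,j)$-symmetry of $\partial_j\bigl(\eta_i/(1+\alpha\,\xi\cdot\eta)\bigr)$. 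Performing the symmetric construction from the $\eta$-side produces $\psi(\eta)$; the Fenchel identity $\varphi(\xi)+\psi(\eta) = c^{(\alpha)}(\xi,\eta)$ then follows because both sides have the same exterior derivative along the graph of the transition map, with the additive constant fixed by normalization at a chosen base point.

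To check that $\Phi = e^{\alpha\varphi}$ is concave I compute $D^2\Phi = \alpha\Phi(D^2\varphi + \alpha\,D\varphi \otimes D\varphi)$; substituting $D\varphi = \eta/(1+\alpha\,\xi\cdot\eta)$ expresses $D^2\Phi$ in terms of the Jacobian $\partial\eta/\partial\xi$, which up to the positive scalar factor $(1+\alpha\,\xi\cdot\eta)^{-1}$ is the Gram matrix of $g$ read in the $\xi$-chart. Positive-definiteness of $g$ then gives $D^2\Phi \preceq 0$, so $\varphi$ is $\alpha$-exponentially concave. With $\varphi$ and $\psi$ in hand, Theorem \ref{thm:main1} provides the self-dual representation, which I adopt as the local canonical divergence $\mathbf{D}^{(\alpha)}$. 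By Theorem \ref{thm:main2}, this divergence induces a dually projectively flat structure of constant curvature $-\alpha$, and matching the affine coordinates together with the metric tensor at the base point via Eguchi's formulas $g_{ij} = -\partial_i\partial_{j'}\mathbf{D}[\xi:\xi']|_{\xi=\xi'}$ identifies the induced structure with the original one. The curvature $+\alpha$ case proceeds in parallel, using $\alpha$-exponentially convex potentials and \eqref{eqn:canonical.divergence.intro2}.

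The main obstacle I expect is the closedness step $d\omega = 0$. In the dually flat limit $\alpha \to 0^+$ it reduces to the symmetry $\partial\eta_i/\partial\xi^j = \partial\eta_j/\partial\xi^i$, which is immediate from genuine flatness of $\nabla$. The extra denominator $1+\alpha\,\xi\cdot\eta$ prevents such a direct reduction, forcing full use of the constant-curvature equation rather than mere projective flatness of the two connections individually. One must also restrict to a chart on which $1+\alpha\,\xi\cdot\eta$ stays positive and bounded away from zero, which is precisely why the conclusion is stated only locally.
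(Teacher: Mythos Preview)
There is a genuine gap in the closedness step, and it is exactly the one you flag as the main obstacle. You choose $\eta$ merely as \emph{some} affine chart for $\nabla^*$, but such a chart is only determined up to an affine change $\eta \mapsto A\eta + b$, and your $1$-form $\omega = \eta\cdot d\xi/(1+\alpha\,\xi\cdot\eta)$ is not invariant under that freedom. The curvature identity $R(X,Y)Z = -\alpha\bigl(g(Y,Z)X - g(X,Z)Y\bigr)$, written in the $\xi$-chart with $\Gamma_{ij}^{\ k} = \rho_i\delta_j^k + \rho_j\delta_i^k$, yields two things: the symmetry $\partial_i\rho_j = \partial_j\rho_i$ (closedness of $\rho_i\,d\xi^i$, not of your $\omega$) and the relation $\alpha\,g_{ij} = \partial_i\rho_j - \rho_i\rho_j$. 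For your argument to go through you would need $\rho_i = -\alpha\,\eta_i/(1+\alpha\,\xi\cdot\eta)$, i.e.\ that the dual chart you happened to pick is already the $\alpha$-gradient of the (as yet unconstructed) potential. Nothing in ``projective flatness of $\nabla^*$ plus the duality relation'' forces that normalization of $\eta$; the transition map $\xi\mapsto\eta(\xi)$ is just a change of coordinates and is not pinned down by \eqref{eqn:connection.duality} alone.

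The paper's proof avoids this circularity by not choosing $\eta$ in advance. It integrates the projective $1$-form $\tau = \rho_i\,d\xi^i$ (which \emph{is} canonical once $\xi$ is fixed) to obtain $\varphi$ with $\rho_i = -\alpha\,\partial_i\varphi$; the second relation $\alpha g_{ij} = \partial_i\rho_j - \rho_i\rho_j$ then reads $g_{ij} = -\partial_i\partial_j\varphi - \alpha\,\partial_i\varphi\,\partial_j\varphi$, and positive-definiteness of $g$ gives $\alpha$-exponential concavity directly. Only afterwards is $\eta$ \emph{defined} as $D^{(\alpha)}\varphi(\xi)$, with $\psi$ given by the Fenchel identity, and one checks a posteriori that this $\eta$ is affine for $\nabla^*$. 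Your plan can be repaired by making exactly this switch: integrate $\rho$ rather than $\omega$, build $\varphi$ first, and let $\eta$ be derived rather than chosen.
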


Our results establish the $L^{(\pm \alpha)}$-divergence as the canonical divergence for a dually projectively flat statistical manifold with constant curvature, thus generalizing the Bregman divergence for dually flat manifolds (see \cite{K90} for another characterization based on affine differential geometry). Recently, Ay and Amari \cite{AA15} (also see \cite{FA18}) defined a canonical divergence for an arbitrary statistical manifold and showed that on a dually flat manifold (i.e., $\alpha = 0$) it reduces to the Bregman divergence. It is interesting to know if their canonical divergence is consistent with our $L^{(\pm \alpha)}$-divergences. Since the $L^{(\pm \alpha)}$-divergences have properties  analogous to those of the Bregman divergence, explicit computations are tractable and a natural question is to study how they perform in applied problems such as clustering \cite{BMDG05} and statistical estimation. Projectively flat connections, statistical manifolds with constant curvatures and relations with affine differential geometry have been studied in the literature; see in particular \cite{K90, DNV90, K94, M99, CU14}. Using ideas of optimal transport and the $L^{(\pm \alpha)}$-divergence, we consider these properties as a whole and are able to give a new and elegant characterization of the geometry.

In optimal transport, it is well known that the quadratic cost, which corresponds in our setting to the Bregman divergence, is intimately related to Brownian motion and the heat equation (via the Wasserstein gradient flow of entropy, see \cite{JKO98, AGS08}) as well as large deviations \cite{ADPZ11} and the  Schr{\"o}dinger problem \cite{L12}. A programme suggested in \cite{PW16} is to study the analogues for our logarithmic cost functions, and in \cite{PW18} we took a first step by studying a multiplicative Schr{\"o}dinger problem. We plan to address some of these questions in future research. Another intriguing problem is to connect other information-geometric structures (such as curvature and the $\rho$-$\tau$ embedding, see \cite{Z04}) with optimal transport.

\section{Divergences induced by optimal transport maps} \label{sec:c.divergence}
A major theme of this paper is that the $L^{(\pm \alpha)}$-divergences are intimated related to optimal transport maps. As a motivating example we first consider the classical Bregman divergence which corresponds to the quadratic cost. Then we introduce the general framework of $c$-divergence following the ideas of \cite{PW16}. %This motivates our treatment of the $L^{(\alpha)}$-divergence in Section \ref{sec:log.divergence}.

\subsection{The Monge-Kantorovich optimal transport problem}
We begin by introducing some basic terminologies of optimal transport. For further details we refer the reader to standard references such as \cite{AG13, S15, V03, V08}. Let $\mathcal{X}$ and $\mathcal{Y}$ be Polish spaces (i.e., complete and separable metric spaces) interpreted respectively as the source and target spaces of the optimal transport problem. Let $c: \mathcal{X} \times \mathcal{Y} \rightarrow \mathbb{R}$ be a continuous cost function. Given Borel probability measures $\mu \in \mathcal{P}(\mathcal{X})$ and $\nu \in \mathcal{P}(\mathcal{Y})$, the Monge-Kantorovich optimal transport problem is
\begin{equation} \label{eqn:monge.kantorovich}
\inf_{\gamma} \int_{\mathcal{X} \times \mathcal{Y}} c(x, y) d\gamma(x, y),
\end{equation}
where the infimum is taken over joint distributions $\gamma \in \mathcal{P}(\mathcal{X} \times \mathcal{Y})$ whose marginals are $\mu$ and $\nu$ respectively. We call $\gamma$ a coupling of the pair $(\mu, \nu)$ and write $\gamma \in \Pi(\mu, \nu)$. If $\gamma$ attains the infimum in \eqref{eqn:monge.kantorovich}, we say that it is an optimal coupling. For certain cost functions and under suitable conditions on $\mu$ and $\nu$, the optimal coupling has the form
\[
\gamma = (\mathrm{Id} \times T)_{\#} \mu
\]
for some measurable map $T : \mathcal{X} \rightarrow \mathcal{Y}$. Here we use $F_{\#} \mu = \mu \circ F^{-1}$ to denote the pushfoward of a measure. In this case the optimal coupling is deterministic, and we call $T$ an optimal transport map.

\begin{remark} \label{rem:cost.function.modify}
Note that if we replace the cost function $c$ by
\begin{equation} \label{eqn:c.extra.linear.terms}
\tilde{c}(x, y) = c(x, y) + h(x) + k(y),
\end{equation}
where $h$ and $k$ and real-valued functions on $\mathcal{X}$ and $\mathcal{Y}$ respectively, then for any coupling $\gamma \in \Pi(\mu, \nu)$ we have
\[
\int \tilde{c} d\gamma = \int c d\gamma + \int h d\mu + \int k d\nu.
\]
The last two integrals, if they exist, are determined once $\mu$ and $\nu$ are fixed and are independent of the coupling $\gamma$. This means that we are free to modify the cost function in the manner of \eqref{eqn:c.extra.linear.terms} without changing the optimal couplings.
\end{remark}

\subsection{Quadratic cost and Bregman divergence}
A fundamental example is where $\mathcal{X} = \mathcal{Y} = \mathbb{R}^d$ ($d \geq 1$) and
\begin{equation} \label{eqn:quadratic.cost}
c(x, y) = \frac{1}{2} \|x - y \|^2 = \frac{1}{2} \sum_{i = 1}^d (x^i - y^i)^2
\end{equation}
is the quadratic cost (where we write $x = (x^1, \ldots, x^d)$). Suppose $\mu, \nu \in \mathcal{P}(\mathbb{R}^d)$ have finite second moments, i.e.,
\[
\int \|x\|^2 d\mu(x) < \infty, \quad \int \|y\|^2 d\nu(y) < \infty,
\]
and suppose $\mu$ is absolutely continuous with respect to the Lebesgue measure on $\mathbb{R}^d$. By Brenier's theorem \cite{B91}, there exists a convex function $\varphi : \mathbb{R}^d \rightarrow \mathbb{R} \cup \{+\infty\}$ such that its gradient $D \varphi$ (which is defined $\mu$-a.e.) pushforwards $\mu$ to $\nu$. Moreover, the deterministic coupling
\[
\gamma = (\mathrm{Id} \times \nabla \varphi)_{\#} \mu
\]
solves the Monge-Kantorovich problem. In fact, it can be shown that the optimal transport map $T(x) = D\varphi(x)$, which is a Legendre trasnformation, is unique $\mu$-a.e. 

To see how the Bregman divergence comes into play, suppose that $\varphi$ is $C^2$ and $D^2 \varphi$ (the Hessian) is strictly positive definite. Then the Legendre transform $y = T(x)$ is a diffeomorphism (see \cite[Chapter 1]{A16}), and we have $x = D \varphi^*(y)$ and $\varphi^*$ is the convex conjugate of $\varphi$. By Fenchel's inequality, for any $x$ and $y'$ we have
\begin{equation} \label{eqn:Fenchel.inequality}
\varphi(x) + \varphi^*(y') \geq x \cdot y',
\end{equation}
and equality holds if and only if $y' = D \varphi(x)$. Intuitively, the inequality \eqref{eqn:Fenchel.inequality} quantifies the inefficiency of coupling $x' = D \varphi^*(y')$ with $y = D \varphi(x)$ when compared to $y'$. Since $\varphi^*(y') = x' \cdot y' - \varphi(x')$, we may rearrange \eqref{eqn:Fenchel.inequality} to get
\[
\varphi(x) - \varphi(x') - D\varphi(x') \cdot (x - x') \geq 0,
\]
which is nothing but the Bregman divergence of $\varphi$. %Since
%\[
%c(x, y) = \frac{1}{2} \|x - y\|^2 = -x \cdot y + \frac{1}{2} \|x\|^2 + \frac{1}{2} \|y\|^2,
%\]

\subsection{$c$-divergence}
Now we show that this idea can be formulated in an abstract framework using generalized concepts of convex analysis. In the following definition we fix a cost function $c: \mathcal{X} \times \mathcal{Y} \rightarrow \mathbb{R}$.

 \begin{definition}
Let $f: \mathcal{X} \rightarrow \mathbb{R} \cup \{-\infty\}$ and $g: \mathcal{Y} \rightarrow \mathbb{R} \cup \{-\infty\}$.
\begin{enumerate}
\item[(i)] The $c$-transforms of $f$ and $g$ are defined respectively by
\begin{equation} \label{eqn:c.transform}
\begin{split}
f^c(y) &= \inf_{x \in \mathcal{X}} \left(c(x, y) - f(x)\right), \quad y \in \mathcal{Y}, \\
g^c(x) &= \inf_{y \in \mathcal{Y}} \left(c(x, y) - g(x)\right), \quad x \in \mathcal{X}.
\end{split}
\end{equation}
\item[(ii)] We say that $f$ (respectively $g$) is $c$-concave if $f^{cc} = f$ (respectively $g^{cc} = g$).
\item[(iii)] If $f$ and $g$ are $c$-concave, their $c$-superdifferentials are defined by
\begin{equation} \label{eqn:c.super.differential}
\begin{split}
\partial^c f &= \{(x, y) \in \mathcal{X} \times \mathcal{Y}: f(x) + f^c(y) = c(x, y)\},\\
\partial^c g &= \{(x, y) \in \mathcal{X} \times \mathcal{Y}: g^c(x) + g(y) = c(x, y)\}.
\end{split}
\end{equation}
\end{enumerate}
\end{definition}

If $f$ is $c$-concave on $\mathcal{X}$, then
\begin{equation} \label{eqn:fenchel}
f(x) + f^c(y) \leq c(x, y)
\end{equation}
for all $(x, y) \in \mathcal{X} \times \mathcal{Y}$, and equality holds if and only if $(x, y) \in \partial^c f$. We call \eqref{eqn:fenchel} the (generalized) Fenchel inequality (or identity when equality holds) which generalizes \eqref{eqn:Fenchel.inequality}. The following result (see for example \cite[Theorem 2.13]{AG13}) is sometimes called the Fundamental Theorem of Optimal Transport.

\begin{theorem} \label{thm:ft.of.optimal.transport}
Suppose the cost function $c$ is continuous and bounded below, and there exist $a \in L^1(\mu)$ and $b \in L^1(\nu)$ such that $c(x, y) \leq a(x) + b(y)$. Then, a coupling $\gamma$ of the pair $(\mu, \nu)$ solves the Monge-Kantorovich problem \eqref{eqn:monge.kantorovich} if and only if there exists a $c$-concave function $f$ on $\mathcal{X}$ such that $\mathrm{supp}(\gamma) \subset \partial^c f$. %We call the $c$-concave function $f$ a Kantorovich potential.
\end{theorem}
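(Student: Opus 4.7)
The plan is to deduce the characterization from the Kantorovich duality
\[
\inf_{\gamma \in \Pi(\mu,\nu)} \int c\, d\gamma = \sup_{(f,g) : f \oplus g \leq c} \left[ \int f\, d\mu + \int g\, d\nu \right],
\]
where the supremum runs over pairs $(f,g) \in L^1(\mu) \times L^1(\nu)$ with $f(x) + g(y) \leq c(x,y)$. Under the hypothesis that $c$ is continuous, bounded below and dominated by $a(x) + b(y)$ with $a \in L^1(\mu)$, $b \in L^1(\nu)$, both the primal and dual problems are well-posed: the primal is the integral of a bounded-below lower semicontinuous function on the tight (hence weakly compact by Prokhorov) set $\Pi(\mu,\nu)$, so a minimizer exists; the duality identity itself can be obtained by a Hahn-Banach / Fenchel-Rockafellar argument applied to appropriate spaces of continuous functions, or by approximation from the bounded case.

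Next, I would reduce the dual problem to $c$-concave potentials. Given any admissible pair $(f,g)$, apply the $c$-transform twice: the pair $(f^{cc}, f^c)$ remains admissible, increases both integrals (since $f^{cc} \geq f$ and $f^c$ is unchanged under a third transform), and satisfies $f^{cc}$ is $c$-concave with $g$ replaced by $f^c = (f^{cc})^c$. This standard $c$-transform monotonicity shows that the supremum can be taken over pairs $(f, f^c)$ with $f$ $c$-concave. Combined with an attainment argument (which, under the integrability hypothesis, produces a maximizing $c$-concave $f$), we obtain a $c$-concave $f$ realizing the dual value.

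Now I would run the complementary slackness step in both directions. If $\gamma$ is optimal and $f$ attains the dual, then
\[
\int c\, d\gamma = \int f\, d\mu + \int f^c\, d\nu = \int \bigl(f(x) + f^c(y)\bigr)\, d\gamma(x,y),
\]
while the Fenchel inequality $f(x) + f^c(y) \leq c(x,y)$ holds pointwise. Hence equality holds $\gamma$-a.e., which by continuity of $c, f, f^c$ forces $\mathrm{supp}(\gamma) \subset \partial^c f$. Conversely, if $\mathrm{supp}(\gamma) \subset \partial^c f$ for some $c$-concave $f$, then for any competitor $\gamma' \in \Pi(\mu,\nu)$,
\[
\int c\, d\gamma = \int (f \oplus f^c)\, d\gamma = \int f\, d\mu + \int f^c\, d\nu \leq \int (f \oplus f^c)\, d\gamma' \leq \int c\, d\gamma',
\]
so $\gamma$ is optimal.

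The main obstacle is establishing the duality identity together with dual attainment at the level of generality stated. The primal existence and the backward direction of the equivalence are straightforward; the forward direction hinges entirely on having a dual maximizer, which is the technical core of standard treatments (e.g.\ Villani). The integrability hypothesis $c \leq a \oplus b$ is precisely what is needed to bootstrap bounded-cost duality (where Hahn--Banach applies cleanly) to the general continuous case via truncation and monotone convergence, so I would spend most of the effort justifying this step and verifying that the $c$-transforms of the approximants inherit the requisite integrability to pass to the limit.
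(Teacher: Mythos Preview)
The paper does not prove this theorem; it is quoted as a known result with a reference to \cite[Theorem 2.13]{AG13} and is used only as background for defining the $c$-divergence. Your sketch via Kantorovich duality, $c$-transform improvement, and complementary slackness is exactly the standard route taken in that reference, so there is nothing substantive to compare.

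One small technical slip: in the forward direction you invoke ``continuity of $c, f, f^c$'' to pass from $\gamma$-a.e.\ equality in the Fenchel inequality to the inclusion $\mathrm{supp}(\gamma)\subset\partial^c f$. In general a $c$-concave function is only upper semicontinuous (as a pointwise infimum of continuous functions), not continuous. This is enough, since then $\partial^c f=\{(x,y):f(x)+f^c(y)\geq c(x,y)\}$ is closed and the argument goes through; but the justification should be phrased in terms of semicontinuity rather than continuity.
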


By Theorem \ref{thm:ft.of.optimal.transport}, a $c$-concave function $f$ on $\mathcal{X}$ (or, rather, its $c$-superdifferential) can be regarded as encoding the solution to an optimal transport problem.

\medskip

Let $f$ is a $c$-concave function on $\mathcal{X}$. Assume that $f$ is $c$-differentiable in the following sense: for each $x \in \mathcal{X}$ there is a unique element $y \in \mathcal{Y}$ such that $(x, y) \in \partial^c f$. We call $y = D^c f(x)$ the $c$-gradient of $f$. This is a rather strong assumption but is satisfied for the cost functions considered in this paper. By Theorem \ref{thm:ft.of.optimal.transport} we may regard $D^c f: \mathcal{X} \rightarrow \mathcal{Y}$ as an optimal transport map. For a detailed discussion of the smoothness properties of optimal transport maps see \cite[Chapter 11]{V08}.

Using the definition of $c$-concavity, for any $x, x' \in \mathcal{X}$ we have
\begin{equation} \label{eqn:c.inequality}
f(x') + c(x, y') - c(x', y') \geq f(x), \quad y' = D^c f(x').
\end{equation}
Analogous definitions and results hold for a $c$-concave function $g$ on $\mathcal{Y}$. The inequality \eqref{eqn:c.inequality} motivates the definition of $c$-divergence (a more suggestive name may be transport divergence) which was first introduced in \cite{PW16}.

\begin{definition}[$c$-divergence]
Let $f$ be $c$-concave and $c$-differentiable on $\mathcal{X}$. The $c$-divergence of $f$ is the functional $\mathbf{D}_f: \mathcal{X} \times \mathcal{X} \rightarrow [0, \infty)$ defined by
\begin{equation} \label{eqn:c.divergence}
\mathbf{D}_f\left[x : x'\right] = c(x, y') - c(x', y') - (f(x) - f(x')), \quad x, x' \in \mathcal{X}, \quad y' = D^c f(x').
\end{equation}
If $g$ is a $c$-differentiable $c$-concave function on $\mathcal{Y}$, we define the dual $c$-divergence $\mathbf{D}_g: \mathcal{Y} \times \mathcal{Y} \rightarrow [0, \infty)$ by
\begin{equation} \label{eqn:dual.c.divergence}\mathbf{D}_g^*\left[y : y'\right] = c(x', y) - c(x', y') - (g(y) - g(y')), \quad y, y' \in \mathcal{Y}, \quad x' = D^c g(y').
\end{equation}
\end{definition}

\begin{theorem} [self-dual representations] \label{thm:c.divergence.self.dual}
Let $y' = D^c f(x')$. Then
\begin{equation} \label{eqn:self.dual.primal}
\mathbf{D}_f \left[x : x' \right] = c(x, y') - f(x) - f^c(y').
\end{equation}
Similarly, if $x = D^c g(y)$, then
\begin{equation} \label{eqn:self.dual.dual}
\mathbf{D}_g^* \left[y : y' \right] = c(x', y) - g(y) - g^c(x').
\end{equation}
\end{theorem}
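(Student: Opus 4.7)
The plan is to reduce the claimed identity to a single application of the Fenchel identity (equality case of \eqref{eqn:fenchel}). Recall that by assumption $f$ is $c$-concave and $c$-differentiable, and $y' = D^c f(x')$ is defined precisely by the condition $(x', y') \in \partial^c f$. By the definition of the $c$-superdifferential in \eqref{eqn:c.super.differential}, this is equivalent to the equality
\[
f(x') + f^c(y') = c(x', y'),
\]
i.e., the Fenchel inequality \eqref{eqn:fenchel} holds with equality at the pair $(x', y')$.

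Next, I would simply substitute this identity into the definition \eqref{eqn:c.divergence} of the $c$-divergence. Starting from
\[
\mathbf{D}_f\left[x : x'\right] = c(x, y') - c(x', y') - f(x) + f(x'),
\]
and replacing $c(x', y')$ by $f(x') + f^c(y')$, the two occurrences of $f(x')$ cancel and the expression collapses to
\[
\mathbf{D}_f\left[x : x'\right] = c(x, y') - f(x) - f^c(y'),
\]
which is \eqref{eqn:self.dual.primal}. Note that non-negativity of $\mathbf{D}_f$, already asserted in the definition, is an immediate byproduct of combining this representation with the Fenchel inequality $f(x) + f^c(y') \leq c(x, y')$.

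The dual statement \eqref{eqn:self.dual.dual} is proved in the same way, with the roles of $\mathcal{X}$ and $\mathcal{Y}$ interchanged: since $x = D^c g(y)$ means $(x, y) \in \partial^c g$, the Fenchel identity reads $g^c(x) + g(y) = c(x, y)$, and substituting into \eqref{eqn:dual.c.divergence} to eliminate $c(x', y')$... wait, here we must be careful: in \eqref{eqn:dual.c.divergence} the term that involves $(x', y')$ with equality is $c(x', y')$ where $x' = D^c g(y')$, so the Fenchel identity gives $g^c(x') + g(y') = c(x', y')$. Substituting and cancelling $g(y')$ yields the desired expression. There is no genuine obstacle here — the only care needed is to track which of the two arguments the $c$-gradient relation is pinned at (namely $x'$ on the primal side and $y'$ on the dual side), so that the Fenchel identity is applied at the correct point before the cancellation.
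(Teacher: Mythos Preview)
Your proof is correct and follows essentially the same approach as the paper: apply the Fenchel identity at the point $(x',y')$ (respectively $(x',y')$ with $x' = D^c g(y')$ on the dual side) and substitute into the definition of the $c$-divergence to obtain the cancellation. Your self-correction on the dual case is exactly right; the relevant Fenchel equality is at $(x',y')$, not $(x,y)$.
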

\begin{proof}
By the generalized Fenchel identity \eqref{eqn:fenchel}, we have
\[
f(x') + f^c(y') = c(x', y') \Rightarrow f^c(y') = c(x', y') - f(x').
\]
It follows that
\begin{equation*}
\begin{split}
\mathbf{D}_f \left[x : x' \right] &= c(x, y') - c(x', y') - (f(x) - f(x')) \\
  &= c(x, y') - f(x) - f^c(y').
\end{split}
\end{equation*}
This gives \eqref{eqn:self.dual.primal}. The proof of the second identity \eqref{eqn:self.dual.dual} is similar.
\end{proof}

%Note that the dot product (dual pairing) $x \cdot y$ in the classical Fenchel duality is replaced by the possibly nonlinear cost function $c(x, y)$. As an immediate corollary, we have

\begin{corollary} [Duality of $c$-divergence] \label{cor:c.divergence.duality}
Suppose $f^c$ is $c$-differentiable on the range of $D^c f$. Let $y = D^c f(x)$ and $y' = D^c f(x')$. Then
\begin{equation} \label{eqn:divergence.duality}
\mathbf{D}_f \left[x' : x\right] = \mathbf{D}_{f^c}^* \left[ y : y' \right].
\end{equation}
%Thus, if we identify $x$ with $y$ and $x'$ with $y'$, then the dual divergence of $f^c$ is simply the divergence of $f$ with the arguments interchanged.
\end{corollary}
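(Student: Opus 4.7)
The plan is to reduce both sides to the same expression using the two self-dual representations from Theorem \ref{thm:c.divergence.self.dual}. For the left-hand side, I would first swap the roles of $x$ and $x'$ in \eqref{eqn:self.dual.primal} to obtain
\[
\mathbf{D}_f\left[x' : x\right] = c(x', y) - f(x') - f^c(y),
\]
using $y = D^c f(x)$. For the right-hand side, I would apply \eqref{eqn:self.dual.dual} with $g = f^c$, which requires identifying the $c$-gradient of $f^c$ at $y'$.

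The crucial intermediate step is the involutive relationship between the primal and dual $c$-gradients, namely that $y' = D^c f(x')$ implies $x' = D^c f^c(y')$. This follows directly from the symmetric form of the generalized Fenchel identity: by definition $(x', y') \in \partial^c f$ means $f(x') + f^c(y') = c(x', y')$, which under the $c$-concavity hypothesis $f^{cc} = f$ is identical to the condition $(x', y') \in \partial^c f^c$ in the dual-side sense. Given the $c$-differentiability assumption on $f^c$ on the range of $D^c f$, the element $x'$ is the unique such point, hence $x' = D^c f^c(y')$.

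With this in hand, \eqref{eqn:self.dual.dual} applied to $g = f^c$ yields
\[
\mathbf{D}_{f^c}^*\left[y : y'\right] = c(x', y) - f^c(y) - (f^c)^c(x'),
\]
and another application of $c$-concavity gives $(f^c)^c(x') = f^{cc}(x') = f(x')$. Substituting produces exactly the expression already obtained for $\mathbf{D}_f[x' : x]$, completing the proof.

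The argument is short and involves no real obstacle beyond carefully tracking which $c$-gradient lives on which side; the conceptual content lies in the reciprocity $D^c f^c = (D^c f)^{-1}$, which is the transport-theoretic analogue of the inverse relationship between a Legendre transformation and its conjugate. Everything else is a direct substitution into the self-dual formulas.
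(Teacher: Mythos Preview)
Your proof is correct and is exactly the intended argument: the paper states this result as an immediate corollary of Theorem~\ref{thm:c.divergence.self.dual} without giving a separate proof, and your derivation---swapping the roles of $x,x'$ in \eqref{eqn:self.dual.primal}, using the Fenchel identity to obtain $x' = D^c f^c(y')$, and then invoking \eqref{eqn:self.dual.dual} with $g = f^c$ together with $f^{cc} = f$---is precisely the computation that justifies it.
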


This result can be viewed as the ``biduality'' -- in the language of Zhang \cite{Z04} -- of the $c$-divergence. Namely, using the $c$-conjugate $f^c$  and the dual coordinates $y = D^c f(x)$ (representational duality) in the $L^{(\alpha)}$-divergence is equivalent to interchanging the order of the arguments (referential duality). See \cite{Z05, Z15, NZ18} for more discussions about the two dualities.

\begin{example}
Suppose $c(x, y) = \frac{1}{2} \|x - y\|^2$ is the quadratic cost on $\mathbb{R}^d$. It is easy to show that $f$ is $c$-concave if and only if $\varphi(x) = \frac{1}{2} \|x\|^2 - f(x)$ is convex. The $c$-gradient of $f$ is $D^c f = D \varphi$, and the $c$-divergence $\mathbf{D}_f$ is the Bregman divergence of $\varphi$. 
\end{example}

\subsection{A logarithmic cost function} \label{sec:log.cost}
Now we introduce a logarithmic cost which leads to the $L^{(\pm \alpha)}$-divergences. Because of the logarithm, to state the result in the framework of $c$-divergence we consider a specific domain, namely the positive quadrant $\mathcal{X} = \mathcal{Y} = \mathbb{R}_{++}^d = (0, \infty)^d$. (To make it a Polish space, we may use for example the metric $d(x, y) = ( \sum_{i = 1}^d (\log x^i - \log y^i)^2 )^{1/2}$.) We keep the discussion brief as a self-contained treatment on general convex domains will be given in Section \ref{sec:log.divergence}.

For $\alpha > 0$, consider the continuous cost function on $\mathbb{R}^d$ given by
\begin{equation} \label{eqn:c.alpha}
c^{(\alpha)}(x, y) = \frac{1}{\alpha} \log (1 + \alpha x \cdot y).
\end{equation}

\begin{remark}
While $c^{(\alpha)}$ may take negative values, it is equivalent (up to scaling and addition of linear terms, see Remark \ref{rem:cost.function.modify}) to the non-negative cost function
\begin{equation} \label{eqn:cost.reparameterized}
\log \left( \frac{1}{1 + \alpha d} + \frac{\alpha}{1 + \alpha d} \sum_{i = 1}^d x_i y_i \right) - \frac{\alpha}{1 + \alpha d} \sum_{i = 1}^d \log (x_i y_i),
\end{equation}
so that Theorem \ref{thm:ft.of.optimal.transport} is applicable. For $\alpha = 1$, this cost function was introduced and studied (under various parameterizations) in \cite{PW14, PW16, PW18}. In particular, \eqref{eqn:cost.reparameterized}  (with $\alpha = 1$) is equivalent to the excess growth rate given in \eqref{eqn:excess.growth.rate} under the change of variables $q_i = y_i/(1 + \sum_j y_j)$ and $p_i = (1/x_i) / (1 + \sum_j 1/x_j)$ for $1 \leq i \leq d$ (see \cite{PW18} for details). The symmetric representation chosen here is more convenient in information-geometric computations.

As $\alpha \rightarrow 0^+$, the cost function $c^{(\alpha)}(x, y)$ in \eqref{eqn:c.alpha} converges to $x \cdot y$. This is equivalent to the quadratic cost
\[
c(x, y) = \frac{1}{2} \|x - y\|^2 = -x \cdot y + \frac{1}{2} \|x\|^2 + \frac{1}{2} \|y\|^2
\]
after a change of variable $y \mapsto -y$ (say).
\end{remark}

The following theorem establishes the $L^{(\alpha)}$-divergence as a $c$-divergence. Analogous results hold for the $L^{(-\alpha)}$-divergence if we pick $c^{(-\alpha)} = - c^{(\alpha)}$. We omit the proof as this result will not be used in the rest of the paper and most of the work (which builds on \cite[Section 3]{PW16}) is contained in Section \ref{sec:log.divergence}.

\begin{theorem} \label{thm:L.divergence.as.c.divergence}
Consider the cost function $c = c^{(\alpha)}$ given by \eqref{eqn:c.alpha}. 
\begin{enumerate}
\item[(i)] $f$ is $c$-concave if and only if $e^{\alpha f}$ is concave on $\mathbb{R}^d$.
\item[(ii)] Suppose $f$ is $c$-concave. The $c$-gradient of $f$ at $x$, if it exists, is given by
\begin{equation} \label{eqn:c.alpha.gradient}
D^c f(x) = \frac{Df(x)}{1 - \alpha Df(x) \cdot x}.
\end{equation}
\item[(iii)] Suppose $f$ is $c$-concave and differentiable. Then the $c$-divergence of $f$ is the $L^{(\alpha)}$-divergence given by
\[
\mathbf{D}_f [ x : x' ] = \mathbf{D}^{(\alpha)}[x : x'] := \frac{1}{\alpha} \log (1 + \alpha Df(x') \cdot (x - x')) - ( f(x) - f(x')).
\]
\end{enumerate}
\end{theorem}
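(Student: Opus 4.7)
The strategy is to exploit the exponential reparameterization $\Phi := e^{\alpha f}$, which turns the cost function into an affine object via the identity $e^{\alpha c^{(\alpha)}(x,y)} = 1 + \alpha x\cdot y$. Under this reparameterization, the $c$-transform becomes multiplicative and the theory of $c$-concavity translates directly into the classical theory of concave envelopes. All three parts reduce to this single observation, and the three parts are logically ordered (i) $\Rightarrow$ (ii) $\Rightarrow$ (iii).

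For (i), I would start from the formula $e^{\alpha f^c(y)} = \inf_x \tfrac{1+\alpha x\cdot y}{\Phi(x)}$, which follows immediately from the definition of the $c$-transform and the logarithmic form of $c^{(\alpha)}$. For the forward direction, if $f = f^{cc}$ is $c$-concave, taking the exponential gives $\Phi(x) = \inf_y (1+\alpha x\cdot y)\,e^{-\alpha f^c(y)}$, exhibiting $\Phi$ as an infimum of affine functions of $x$, hence concave. For the converse, the key observation is a bijective correspondence: an affine function $x\mapsto A + B\cdot x$ with $A>0$ coincides with $(1+\alpha x\cdot y)e^{-\alpha\beta}$ for $y = B/(\alpha A)$ and $\beta = -\tfrac{1}{\alpha}\log A$. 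If $\Phi$ is concave and strictly positive on $\mathbb{R}^d_{++}$, it equals the infimum of its affine majorants with positive constant term (any majorant can be perturbed by adding a small positive constant), and this infimum translates back to $f = f^{cc}$.

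For (ii), by definition $y = D^c f(x)$ is the (unique) point at which the Fenchel identity $f(x) + f^c(y) = c(x,y)$ holds. This forces $x' \mapsto c(x',y) - f(x')$ to be minimized at $x'=x$, so the first-order condition gives $D_x c(x,y) = Df(x)$, i.e.\ $y/(1+\alpha x\cdot y) = Df(x)$. Setting $\lambda = 1 + \alpha x\cdot y$ and substituting $y = \lambda Df(x)$ into the scalar equation produces $\lambda = 1/(1 - \alpha Df(x)\cdot x)$ and the claimed formula. Part (iii) then reduces to a short algebraic manipulation: the two cost terms in $\mathbf{D}_f[x:x']$ combine to $\tfrac{1}{\alpha}\log\tfrac{1+\alpha x\cdot y'}{1+\alpha x'\cdot y'}$, and substituting $y' = \lambda' Df(x')$ with $1+\alpha x'\cdot y' = \lambda'$ from (ii) simplifies the ratio to $1 + \alpha Df(x')\cdot(x-x')$, yielding the $L^{(\alpha)}$-divergence after combining with $-(f(x)-f(x'))$.

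The main obstacle is the converse direction of (i), where one must justify that the restricted class of affine majorants of the form $(1+\alpha x\cdot y)e^{-\alpha\beta}$ (i.e., those with positive constant term) suffices to recover a general positive concave function. The perturbation argument above works on the open domain $\mathbb{R}^d_{++}$, but genuine care is needed to ensure the infimum is not strictly smaller than $\Phi$ at interior points; this is where the positivity $\Phi > 0$ and the openness of the domain are both essential. Parts (ii) and (iii) are then routine calculations once the first-order characterization in (ii) is in hand.
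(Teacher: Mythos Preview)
The paper explicitly omits a proof of this theorem, pointing instead to \cite[Section 3]{PW16} and to Section~\ref{sec:log.divergence} of the present paper as containing ``most of the work.'' Your argument is correct and lines up precisely with those ingredients: the exponential reparameterization $\Phi=e^{\alpha f}$ together with $e^{\alpha c^{(\alpha)}(x,y)}=1+\alpha x\cdot y$ is exactly the mechanism behind \eqref{eqn:usual.duality} and \eqref{eqn:conjugate.computation}; your first-order computation in (ii) reproduces the identity \eqref{eqn:nice.identity}; and your simplification of the cost ratio in (iii) is the same calculation carried out just below \eqref{eqn:duality.minimum} to obtain \eqref{eqn:c.divergence.first}.

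One small remark on the converse in (i): your perturbation idea (adding a small positive constant to an arbitrary affine majorant) is cleaner than you perhaps give it credit for. Since $\Phi>0$ on $\mathbb{R}^d_{++}$ and $0\in\overline{\mathbb{R}^d_{++}}$, every supporting hyperplane of $\Phi$ already has nonnegative constant term, and the strictly positive ones are dense among these; so the restricted infimum does recover $\Phi$ at interior points without further contortion. This is also how Condition~\ref{condition:concave} is justified in Section~\ref{sec:log.divergence}.
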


%Similarly, the $L^{(-\alpha)}$-divergence corresponds to the cost function $c^{(-\alpha)}(x, y) = \frac{-1}{\alpha} \log (1 + \alpha x \cdot y)$.

\section{Exponential concavity/convexity and the $L^{(\pm \alpha)}$-divergences} \label{sec:log.divergence}
In this section we define exponential concavity and convexity as well as the associated $L^{(\pm \alpha)}$-divergence $\mathbf{D}^{(\pm \alpha)}$. We also derive their self-dual representations. Our treatment is motivated by, but independent from, the connection with optimal transport maps given in Section \ref{sec:c.divergence}.

\subsection{Exponential concavity and convexity}
\begin{definition} \label{def:exponential.concavity}
Let $\Omega \subset \mathbb{R}^d$ be an open convex set, $\varphi : \Omega \rightarrow \mathbb{R}$, and fix $\alpha > 0$. Also write $\Phi = e^{\alpha \varphi}$.

\begin{enumerate}
\item[(i)] We say  that $\varphi$ is $\alpha$-exponentially concave if $\Phi$ is concave on $\Omega$. For $\alpha = 0^+$ (that is, the limit as $\alpha \rightarrow 0^+$), we say that $\varphi$ is $0^+$-exponentially concave if $\varphi$ is concave.
\item[(ii)] We say that $\varphi$ is $\alpha$-exponentially convex if $\Phi$ is convex  on $\Omega$. For $\alpha = 0^+$, we say that $\varphi$ is $0^+$-exponentially convex if $\varphi$ is convex.
\end{enumerate}
\end{definition}

Assuming $\varphi$ is twice continuously differentiable on $\Omega$, we have
\begin{equation} \label{eqn:Phi.D2}
D^2 \Phi = D^2 e^{\alpha \varphi} = \alpha e^{\alpha \varphi} \left( D^2 \varphi + \alpha (D \varphi ) (D\varphi )^{\top}\right),
\end{equation}
where $(D \varphi)^{\top}$ is the transpose of the column vector $D \varphi$. From this we have the following elementary lemma which justifies the limiting cases in Definition \ref{def:exponential.concavity}. %The proof is left to the reader.

\begin{lemma} \label{lem:exp.concave.convex}
Suppose $\varphi$ is twice continuously differentiable on $\Omega$ and $\alpha > 0$.
\begin{enumerate}
\item[(i)] $\varphi$ is $\alpha$-exponentially concave if and only if
\begin{equation} \label{eqn:metric.exp.concave}
-D^2 \varphi - \alpha (D \varphi) (D \varphi)^{\top} \geq 0.
\end{equation}
\item[(ii)] $\varphi$ is $\alpha$-exponentially convex if and only if
\begin{equation} \label{eqn:metric.exp.convex}
D^2 \varphi + \alpha (D \varphi) (D \varphi)^{\top} \geq 0.
\end{equation}
\end{enumerate}
Here the inequalities are in the sense of positive semidefinite matrix.
\end{lemma}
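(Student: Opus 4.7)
The plan is to derive both statements directly from the Hessian identity \eqref{eqn:Phi.D2} combined with the standard second-order characterization of convexity/concavity for $C^2$ functions on a convex open set.

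First I would verify \eqref{eqn:Phi.D2} by a routine application of the chain rule. Writing $\Phi = e^{\alpha\varphi}$, differentiating once gives $D\Phi = \alpha e^{\alpha\varphi} D\varphi$, and differentiating again (treating $D\Phi$ as a column vector and applying the product rule) yields
\[
D^2 \Phi \;=\; \alpha e^{\alpha\varphi}\bigl(D^2\varphi + \alpha (D\varphi)(D\varphi)^{\top}\bigr),
\]
which is precisely \eqref{eqn:Phi.D2}.

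Next I would invoke the classical fact that a $C^2$ function on an open convex subset of $\mathbb{R}^d$ is concave (respectively convex) if and only if its Hessian is negative semidefinite (respectively positive semidefinite) pointwise on the domain. Applied to $\Phi$, this gives: $\varphi$ is $\alpha$-exponentially concave iff $D^2\Phi \leq 0$ on $\Omega$, and $\alpha$-exponentially convex iff $D^2\Phi \geq 0$ on $\Omega$.

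Finally I would observe that the scalar prefactor $\alpha e^{\alpha\varphi}$ in \eqref{eqn:Phi.D2} is strictly positive, so it can be divided out without affecting the sign of the matrix. Hence $D^2\Phi \leq 0$ is equivalent to $D^2\varphi + \alpha (D\varphi)(D\varphi)^\top \leq 0$, i.e.\ \eqref{eqn:metric.exp.concave}, proving (i); and $D^2\Phi \geq 0$ is equivalent to \eqref{eqn:metric.exp.convex}, proving (ii). There is no substantive obstacle here: the lemma is essentially a bookkeeping consequence of the chain rule and the Hessian test, included to make the two sign conventions explicit for later use in comparing the induced metric to the Euclidean Hessian.
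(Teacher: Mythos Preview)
Your proposal is correct and follows exactly the route the paper intends: the lemma is stated immediately after the Hessian identity \eqref{eqn:Phi.D2} as an elementary consequence, and your argument (chain rule to verify \eqref{eqn:Phi.D2}, standard $C^2$ Hessian test for concavity/convexity, divide out the strictly positive scalar $\alpha e^{\alpha\varphi}$) is precisely what is meant.
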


Because of the product term in \eqref{eqn:Phi.D2}, we cannot pass between exponential concavity and convexity simply by considering $-\varphi$. This is different from the classical case $\alpha = 0$. In particular, from Lemma \ref{lem:exp.concave.convex} we see that an $\alpha$-exponentially concave function is always concave, but an $\alpha$-exponentially convex function is not necessarily convex. In Section \ref{sec:L.geometry} we will regard \eqref{eqn:metric.exp.concave} and \eqref{eqn:metric.exp.convex} as the Riemannian metrics induced by $\mathbf{D}^{(\alpha)}$ and $\mathbf{D}^{(-\alpha)}$ respectively.

\subsection{$L^{(\pm \alpha)}$-divergences} \label{sec:L.alpha.duality}
By convention we always let $\alpha > 0$ be fixed. Suppose $\varphi$ is a differentiable and $\alpha$-exponentially concave function. By the concavity of $\Phi = e^{\alpha \varphi}$, for any $\xi, \xi' \in \Omega$ we have
\begin{equation} \label{eqn:L.divergence.motivation}
\Phi(\xi') + D\Phi(\xi') \cdot (\xi - \xi') \geq \Phi(\xi) \Rightarrow 1 + \alpha D \varphi(\xi') \cdot ( \xi - \xi' ) \geq e^{\alpha (\varphi(\xi) - \varphi(\xi'))}.
\end{equation}
This motivates the following definition of the $L^{(\alpha)}$-divergence. It is different from the Bregman divergence in that our divergence is given by the ratio rather than the difference in \eqref{eqn:L.divergence.motivation}.

\begin{definition} [$L^{(\alpha)}$-divergence]
If $\varphi$ is differentiable and $\alpha$-exponentially concave, we define the $L^{(\alpha)}$-divergence of $\varphi$ by
\begin{equation} \label{eqn:L.alpha.divergence}
\mathbf{D}^{(\alpha)}\left[ \xi : \xi' \right] = \frac{1}{\alpha} \log \left(1 + \alpha D \varphi(\xi') \cdot ( \xi - \xi' )\right) - \left( \varphi(\xi) - \varphi(\xi') \right), \quad \xi, \xi' \in \Omega.
\end{equation}
For $\alpha = 0^+$, we define the $L^{(0+)}$-divergence by the Bregman divergence \eqref{eqn:Bregman.divergence}.
\end{definition}

From \eqref{eqn:L.divergence.motivation} we have that $\mathbf{D}^{(\alpha)}\left[ \xi : \xi' \right] \geq 0$. If $\Phi$ is strictly concave, then $\mathbf{D}^{(\alpha)}\left[ \xi : \xi' \right] = 0$ only if $\xi = \xi'$.

For an $\alpha$-exponentially convex function, the analog of \eqref{eqn:L.divergence.motivation} is
\begin{equation} \label{eqn:L.minus.alpha.motivation}
1 + \alpha D \varphi(\xi') \cdot ( \xi - \xi' ) \leq e^{\alpha (\varphi(\xi) - \varphi(\xi'))}.
\end{equation}
Unfortunately, now the left hand side of \eqref{eqn:L.minus.alpha.motivation} may become negative and so its logarithm may not exist. Nevertheless, we can define the $L^{(-\alpha)}$-divergence locally when $\xi$ and $\xi'$ are close. This is sufficient for defining the dualistic structure $(g, \nabla, \nabla^*)$.

\begin{definition} [local $L^{(-\alpha)}$-divergence]
If $\varphi$ is differentiable and $\alpha$-exponentially convex, we define the $L^{(-\alpha)}$-divergence of $\varphi$ locally by
\begin{equation} \label{eqn:L.minus.alpha.divergence}
\mathbf{D}^{(-\alpha)}\left[ \xi : \xi' \right] = \left( \varphi(\xi) - \varphi(\xi') \right) -  \frac{1}{\alpha} \log \left(1 + \alpha D \varphi(\xi') \cdot ( \xi - \xi' )\right),
\end{equation}
for all pairs $(\xi, \xi')$ in $\Omega$ such that the logarithm in \eqref{eqn:L.minus.alpha.divergence} exists. For $\alpha = 0^+$, $\varphi$ is convex and we define (globally) the $L^{(0-)}$-divergence as the Bregman divergence
\begin{equation} \label{eqn:L.0.minus.divergence}
\mathbf{D}^{(0-)} \left[ \xi : \xi'\right] = \left( \varphi(\xi) - \varphi(\xi') \right) - D\varphi(\xi') \cdot (\xi - \xi').
\end{equation}
\end{definition}

\subsection{Duality of $L^{(\pm \alpha)}$ divergence} \label{sec:alpha.duality}
In this subsection we develop a duality theory for $\alpha$-exponentially concave and convex functions. By convention we always assume $\alpha > 0$ is fixed. We generalize the Legendre transform and conjugate which will be used to formulate the self-dual representations of the $L^{(\pm \alpha)}$-divergences. They correspond to the $c$-transform in Section \ref{sec:c.divergence}.

To ensure that the potential function $\varphi$ is well-behaved and the differential geometric objects associated with the $L^{(\pm \alpha)}$-divergences are well-defined, we will impose throughout this paper some regularity conditions.

\subsubsection{Duality for $\alpha$-exponentially concave functions}
We first consider the case where $\varphi$ is $\alpha$-exponentially concave. 

\begin{condition} [conditions for $L^{(\alpha)}$-divergence] \label{condition:concave}
We assume $\varphi$ is smooth on $\Omega$ and the matrix $-D^2 \varphi - \alpha (D \varphi) (D \varphi)^{\top}$ is strictly positive definite on $\Omega$. By considering a translation of the variable $\xi$ if necessary, we assume without loss of generality that for all $\xi \in \Omega$ we have
\begin{equation} \label{eqn:positivity}
1 - \alpha D \varphi(\xi) \cdot \xi = 1 - \alpha \sum_{ \ell = 1}^d \xi^{\ell} \frac{\partial \varphi}{\partial \xi^{\ell}}(\xi) > 0.
\end{equation}
Note that by strict concavity of $\Phi = e^{\alpha \varphi}$ and \eqref{eqn:L.divergence.motivation}, the condition \eqref{eqn:positivity} holds whenever $0 \in \overline{\Omega}$. (For example, in Section \ref{sec:log.cost} we have $\Omega = \mathbb{R}_{++}^d$ and $0 \in \overline{\Omega}$.)
\end{condition}

Observe that there does not exist any positive and strictly concave function $\Phi = e^{\alpha \varphi}$ on the real line. Thus, in order that $\varphi$ is $\alpha$-exponentially concave on $\Omega$, the domain $\Omega$ must not contain any line (rays are fine). This is different from the Bregman case where the concave function may be globally defined on $\mathbb{R}^d$.  Examples of feasible domains in our setting include the open simplex (considered in \cite{PW16}) as well as the positive quadrant $\mathbb{R}_{++}^d = (0, \infty)^d$.

Condition \eqref{eqn:positivity} will be used in the following construction. For each $\xi' \in \Omega$, the supporting tangent hyperplane of $\Phi = e^{\alpha \varphi}$ at $\xi'$ is given by
\[
\xi \in \Omega \mapsto \Phi(\xi') + D \Phi(\xi') \cdot (\xi - \xi') = e^{\alpha \varphi(\xi')} \left(1 - \alpha D \varphi(\xi') \cdot \xi' + \alpha D \varphi (\xi') \cdot \xi\right).
\]
Thus \eqref{eqn:positivity} is equivalent to positivity of the constant coefficient. By concavity of $\Phi$, for $\xi \in \Omega$ we have
\begin{equation} \label{eqn:usual.duality}
e^{\alpha \varphi(\xi)} = \min_{\xi' \in \Omega} e^{\alpha \varphi(\xi')} \left( (1 - \alpha D \varphi(\xi') \cdot \xi') + \alpha D \varphi (\xi') \cdot \xi \right).
\end{equation}
Since $\Phi$ is strictly concave, the minimum is attained uniquely at $\xi' = \xi$. This and the logarithm is the basis of Theorem \ref{prop:alpha.duality} which is our duality result. To prepare for it let us introduce some notations.

\begin{definition} [$\alpha$-gradient] 
We define the $\alpha$-gradient of $\varphi$ at $\xi \in \Omega$ by
\begin{equation} \label{eqn:alpha.gradient}
D^{(\alpha)} \varphi(\xi) = \frac{1}{1 - \alpha D \varphi(\xi) \cdot \xi } D \varphi(\xi),
\end{equation}
where $D \varphi = D^{(0)} \varphi$ is the Euclidean gradient. Condition \ref{condition:concave} ensures that it is well-defined.
\end{definition}

\begin{definition}[dual coordinate]
We define the dual coordinate $\eta$ by
\begin{equation} \label{eqn:dual.coordinate}
\eta = D^{(\alpha)} \varphi(\xi).
\end{equation}
We let $\Omega' = D^{(\alpha)} \varphi (\Omega)$ be the range of $\eta$ (which may not be convex).
\end{definition}

This terminology is justified by the following result.

\begin{proposition}
The $\alpha$-gradient $D^{(\alpha)} \varphi$ is a diffeomorphism from $\Omega$ onto $\Omega'$ which is an open set in $\mathbb{R}^d$.
\end{proposition}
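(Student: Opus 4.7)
The plan is to prove the proposition in three independent steps—smoothness, global injectivity, and nonsingularity of the Jacobian—and then assemble them via the inverse function theorem. Smoothness of $D^{(\alpha)}\varphi$ on $\Omega$ is immediate from Condition~\ref{condition:concave}: $\varphi$ is smooth, and the denominator $\beta(\xi) := 1 - \alpha D\varphi(\xi)\cdot\xi$ is strictly positive throughout $\Omega$.

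For injectivity, I would exploit the tangent-hyperplane picture of $\Phi = e^{\alpha\varphi}$ already implicit in \eqref{eqn:usual.duality}. The supporting tangent of $\Phi$ at $\xi'\in\Omega$ can be rewritten, after dividing through by $1$ and regrouping, as
\[
\Phi(\xi) \leq a(\xi')\bigl(1 + \alpha\, D^{(\alpha)}\varphi(\xi')\cdot\xi\bigr), \qquad a(\xi') := \Phi(\xi')\beta(\xi') > 0,
\]
with equality iff $\xi = \xi'$ by strict concavity of $\Phi$. If now $D^{(\alpha)}\varphi(\xi_1) = D^{(\alpha)}\varphi(\xi_2) =: \eta$, then feeding $\xi = \xi_2$ into the tangent inequality at $\xi_1$ and using the equality case at $\xi_2$ of the tangent at $\xi_2$ yields $a(\xi_2) \leq a(\xi_1)$; the symmetric manipulation gives the reverse, so $a(\xi_1) = a(\xi_2)$. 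The two tangent hyperplanes therefore coincide and strict concavity of $\Phi$ forces $\xi_1 = \xi_2$.

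Nonsingularity of the Jacobian is where I expect the main friction. Writing $u = D\varphi(\xi)$ and $A = D^2\varphi(\xi)$, direct differentiation of $\eta^i = u^i/\beta$ gives
\[
J := \frac{\partial \eta}{\partial \xi} = \frac{1}{\beta^2}\bigl[(\beta I + \alpha u \xi^{\top}) A + \alpha u u^{\top}\bigr].
\]
The crucial observation is that $\beta + \alpha\xi\cdot u = 1$ by definition of $\beta$, from which one checks directly that $(\beta I + \alpha u\xi^{\top})(\alpha u u^{\top}) = \alpha u u^{\top}$; this produces the clean factorisation
\[
J = \frac{1}{\beta^2}\bigl(\beta I + \alpha u \xi^{\top}\bigr)\bigl(A + \alpha u u^{\top}\bigr).
\]
The matrix determinant lemma yields $\det(\beta I + \alpha u \xi^{\top}) = \beta^{d-1}(\beta + \alpha\xi\cdot u) = \beta^{d-1}$, while $A + \alpha u u^{\top} = D^2\varphi + \alpha(D\varphi)(D\varphi)^{\top}$ is strictly negative definite on $\Omega$ by Condition~\ref{condition:concave} and Lemma~\ref{lem:exp.concave.convex}. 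Hence $\det J$ is nonzero throughout $\Omega$.

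Putting the pieces together, $D^{(\alpha)}\varphi$ is smooth and globally injective on $\Omega$ with everywhere nonsingular Jacobian. The inverse function theorem then makes it a local diffeomorphism, so $\Omega' = D^{(\alpha)}\varphi(\Omega)$ is open in $\mathbb{R}^d$, and global injectivity upgrades this to a (global) smooth diffeomorphism $\Omega \to \Omega'$. The only genuinely subtle step is the factorisation of $J$; once spotted, the nonsingularity is transparent, whereas a direct attack on the three-term expression of $\det J$ would be considerably more painful.
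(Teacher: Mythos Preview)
Your proof is correct. The overall architecture---smoothness, injectivity, nonsingular Jacobian, then inverse function theorem---matches the paper's, but two of the three steps are executed differently.

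For injectivity, the paper assumes $D^{(\alpha)}\varphi(\xi) = D^{(\alpha)}\varphi(\xi')$ and manipulates until it reaches the identity
\[
\bigl(1 + \alpha D\varphi(\xi)\cdot(\xi'-\xi)\bigr)\bigl(1 + \alpha D\varphi(\xi')\cdot(\xi-\xi')\bigr) = 1,
\]
which it recognises as $\mathbf{D}^{(\alpha)}[\xi':\xi] + \mathbf{D}^{(\alpha)}[\xi:\xi'] = 0$, contradicting strict positivity of the divergence. Your tangent-hyperplane argument is the same strict-concavity content packaged geometrically: equal $\alpha$-gradients mean parallel supporting hyperplanes of $\Phi$, and your $a(\xi_1)=a(\xi_2)$ step shows they in fact coincide. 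Both routes are short; yours avoids invoking the divergence and is arguably more transparent about where strict concavity enters.

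For the Jacobian, the paper does not actually carry out the computation in the proof: it asserts that Condition~\ref{condition:concave} implies invertibility, cites an earlier paper for details, and forward-references the explicit expression \eqref{eqn:Riemannian.matrix}. Your factorisation
\[
J = \frac{1}{\beta^2}\bigl(\beta I + \alpha u\xi^{\top}\bigr)\bigl(D^2\varphi + \alpha (D\varphi)(D\varphi)^{\top}\bigr)
\]
is a genuine addition: it is self-contained, makes the role of Condition~\ref{condition:concave} completely explicit via the second factor, and the determinant calculation via the matrix determinant lemma is clean. This is a nicer treatment of the step than what appears in the paper itself.
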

\begin{proof}
It is clear from \eqref{eqn:alpha.gradient} that the map $D^{(\alpha)} \varphi$ is smooth.

To show that $D^{(\alpha)} \varphi$ is injective, suppose towards a contradiction that $D^{(\alpha)} \varphi(\xi) = D^{(\alpha)} \varphi(\xi')$ for distinct $\xi, \xi' \in \Omega$. By definition, we have
\[
\frac{D \varphi(\xi)}{1 - \alpha D \varphi(\xi) \cdot \xi} = \frac{D \varphi(\xi')}{1 - \alpha D \varphi(\xi') \cdot \xi'}.
\]
It follows that for any $\xi'' \in \Omega$, we have
\[
1 + \frac{\alpha D \varphi(\xi) \cdot \xi''}{1 - \alpha D \varphi(\xi) \cdot \xi} = 1 + \frac{\alpha D \varphi(\xi') \cdot \xi''}{1 - \alpha D \varphi(\xi') \cdot \xi'}.
\]
Rearranging, we have the identity
\[
\frac{1 + \alpha D \varphi(\xi) \cdot (\xi'' - \xi)}{1 - \alpha D \varphi(\xi) \cdot \xi} = \frac{1 + \alpha D \varphi(\xi') \cdot (\xi'' - \xi')}{1 - \alpha D \varphi(\xi') \cdot \xi'}.
\]
Letting $\xi''$ be $\xi$ and $\xi'$ respectively, we see after some computation that
\[
(1 + \alpha D \varphi(\xi) \cdot (\xi' - \xi)) \cdot \left(1 + \alpha D \varphi(\xi') \cdot (\xi - \xi')\right) = 1.
\]
Observe that this is equivalent to
\[
\mathbf{D}^{(\alpha)}\left[ \xi' : \xi \right] + \mathbf{D}^{(\alpha)}\left[ \xi : \xi' \right] = 0,
\]
which is a contradiction since $\mathbf{D}^{(\alpha)}\left[ \xi' : \xi \right], \mathbf{D}^{(\alpha)}\left[ \xi : \xi' \right] > 0$ by Condition \ref{condition:concave}. This proves that the $\alpha$-gradient is injective.

Finally, from Condition \ref{condition:concave} again we have $-D^2 \varphi - \alpha (D \varphi) (D \varphi)^{\top} > 0$. This can be used to show that the Jacobian $\frac{\partial \eta}{\partial \xi}$ is invertible. By the inverse function theorem, $D^{(\alpha)} \varphi: \Omega \rightarrow \Omega'$ is a diffeomorphism and $\Omega'$ is open. (Essentially the same result is proved in \cite[Theorem 3.2]{PW16} so we do not provide the details. Also see \eqref{eqn:Riemannian.matrix} below which gives an explicit expression of the Jacobian.)
\end{proof}

\begin{definition}[$\alpha$-conjugate] \label{def:alpha.conjugate}
Consider the function $c^{(\alpha)}$ given by \eqref{eqn:c.alpha} and defined for $ x, y \in \mathbb{R}^d$ with $1 + \alpha x \cdot y > 0$. We define the $\alpha$-conjugate $\psi$ of $\varphi$ on the open set $\Omega' = D^{(\alpha)} \varphi(\Omega)$ by
\begin{equation} \label{eqn:alpha.conjugate}
\psi(\eta) = c^{(\alpha)}(\xi, \eta) - \varphi(\xi), \quad \xi = ( D^{(\alpha)} \varphi )^{-1} (\eta).
\end{equation}
\end{definition}

By \eqref{eqn:nice.identity} below $1 + \alpha \xi \cdot \eta > 0$, hence $c^{(\alpha)}(\xi, \eta)$ and $\psi(\eta)$ are well-defined.

\begin{theorem} [$\alpha$-duality] \label{prop:alpha.duality}
For $\xi \in \Omega$, $\eta \in \Omega'$ we have
\begin{equation} \label{eqn:first.conjugate}
\varphi(\xi) = \min_{\eta' \in \Omega'} \left( c^{(\alpha)}(\xi, \eta') - \psi(\eta') \right),
\end{equation}
\begin{equation}  \label{eqn:second.conjugate}
\psi(\eta) = \min_{\xi' \in \Omega} \left( c^{(\alpha)}(\xi', \eta) - \varphi(\xi') \right).
\end{equation}
Furthermore, $\psi$ is locally $\alpha$-exponentially concave on $\Omega'$ in the sense that $-D^2 \psi - \alpha (D \psi) (D \psi)^{\top} > 0$ on $\Omega'$, and we have $1 - \alpha D \psi(\eta) \cdot \eta > 0$ on $\Omega'$ so that the $\alpha$-gradient $D^{(\alpha)} \psi(\eta) = \frac{1}{1 - \alpha D \psi(\eta) \cdot \eta} D \psi(\eta)$ is well-defined. We have
\begin{equation} \label{eqn:alpha.gradient.inverse}
( D^{(\alpha)} \varphi )^{-1} = D^{(\alpha)} \psi.
\end{equation}
\end{theorem}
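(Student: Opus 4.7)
The plan is to derive everything from the tangent-plane characterization of concavity of $\Phi = e^{\alpha\varphi}$ together with the key algebraic identity
\[
1 + \alpha\,\xi\cdot\eta \;=\; \frac{1}{1 - \alpha\,D\varphi(\xi)\cdot\xi}, \qquad \eta = D^{(\alpha)}\varphi(\xi),
\]
which follows at once from the definition \eqref{eqn:alpha.gradient} by direct substitution and is the identity being cited as \eqref{eqn:nice.identity}. This identity has two uses: it certifies $1+\alpha\xi\cdot\eta>0$ (so $c^{(\alpha)}$ and $\psi$ are well-defined in Definition \ref{def:alpha.conjugate}), and it converts the supporting-hyperplane inequality \eqref{eqn:usual.duality} into the additive Fenchel-type form needed for \eqref{eqn:first.conjugate}--\eqref{eqn:second.conjugate}. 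Specifically, I would factor the tangent-plane expression in \eqref{eqn:usual.duality} as
\[
\Phi(\xi')(1 - \alpha D\varphi(\xi')\cdot\xi')\,(1 + \alpha\,\eta'\cdot\xi), \qquad \eta' = D^{(\alpha)}\varphi(\xi'),
\]
take $\tfrac{1}{\alpha}\log$, and use the nice identity to recognize the first factor as $-\psi(\eta')$. This yields $\varphi(\xi) = \min_{\xi'\in\Omega}\{c^{(\alpha)}(\xi,\eta') - \psi(\eta')\}$ with the minimum attained at $\xi' = \xi$; reparameterizing via the bijection $D^{(\alpha)}\varphi:\Omega\to\Omega'$ gives \eqref{eqn:first.conjugate}. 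Rearranging the same pointwise Fenchel inequality $c^{(\alpha)}(\xi,\eta') - \psi(\eta') \geq \varphi(\xi)$ as $c^{(\alpha)}(\xi,\eta') - \varphi(\xi) \geq \psi(\eta')$, with equality at $\xi = (D^{(\alpha)}\varphi)^{-1}(\eta')$, produces \eqref{eqn:second.conjugate}.

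For the gradient identities I would invoke the envelope theorem on the smooth optimization problem \eqref{eqn:second.conjugate}: at the interior minimizer $\xi = \xi(\eta) := (D^{(\alpha)}\varphi)^{-1}(\eta)$,
\[
D\psi(\eta) \;=\; \bigl.D_\eta c^{(\alpha)}(\xi,\eta)\bigr|_{\xi=\xi(\eta)} \;=\; \frac{\xi(\eta)}{1+\alpha\,\xi(\eta)\cdot\eta}.
\]
A direct computation then gives $1 - \alpha D\psi(\eta)\cdot\eta = 1/(1+\alpha\,\xi\cdot\eta) > 0$, establishing the positivity statement for $\psi$; and substituting into the definition of the $\alpha$-gradient yields $D^{(\alpha)}\psi(\eta) = D\psi(\eta)/(1-\alpha D\psi(\eta)\cdot\eta) = \xi(\eta)$, which is exactly \eqref{eqn:alpha.gradient.inverse}.

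For the local $\alpha$-exponential concavity of $\psi$, my plan is to use the minimum representation \eqref{eqn:second.conjugate} after exponentiation. Since $t\mapsto e^{\alpha t}$ is monotone,
\[
\Psi(\eta) \;:=\; e^{\alpha\psi(\eta)} \;=\; \min_{\xi'\in\Omega}\frac{1+\alpha\,\xi'\cdot\eta}{\Phi(\xi')}
\]
locally near any $\eta_0 \in \Omega'$ (where the minimum is attained at the interior point $\xi(\eta_0)$ and stays positive by continuity). For each fixed $\xi'$ the integrand is \emph{affine} in $\eta$, so the pointwise minimum is concave in $\eta$. By Lemma \ref{lem:exp.concave.convex} this delivers $-D^2\psi - \alpha (D\psi)(D\psi)^\top \geq 0$. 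To upgrade weak to strict positive definiteness I would combine this concavity with the computation of the Jacobian of $\eta\mapsto\xi(\eta)$: since $D^{(\alpha)}\varphi$ is a diffeomorphism, $\partial\xi/\partial\eta$ is invertible, and substituting $D\psi = \xi/(1+\alpha\xi\cdot\eta)$ into the derivative expression shows that $-D^2\psi - \alpha(D\psi)(D\psi)^\top$ is a nonsingular multiple of $\partial\xi/\partial\eta$, hence strictly positive definite.

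The main obstacle I anticipate is the final strict-positivity step: the min-of-affine-functions argument is very clean for concavity but gives only $\geq 0$, so I would need either the Jacobian identification above or (equivalently) to verify that the matrix relation between $D^2\psi$ and $(D^{(\alpha)}\varphi)^{-1}$'s Jacobian transfers the strict positive definiteness assumed in Condition \ref{condition:concave} for $\varphi$ to the corresponding matrix for $\psi$. The promised explicit formula \eqref{eqn:Riemannian.matrix} for the Jacobian should make this bookkeeping routine once written out, but it is the only place in the proof where careful matrix computation, as opposed to soft variational reasoning, seems unavoidable.
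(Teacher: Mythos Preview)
Your proposal is correct and, for \eqref{eqn:first.conjugate}, \eqref{eqn:second.conjugate}, the positivity $1-\alpha D\psi(\eta)\cdot\eta>0$, and \eqref{eqn:alpha.gradient.inverse}, it follows the paper's proof essentially line by line (the paper obtains $D\psi(\eta)=\xi/(1+\alpha\xi\cdot\eta)$ from the first-order condition in \eqref{eqn:first.conjugate} rather than the envelope theorem applied to \eqref{eqn:second.conjugate}, but this is the same computation read two ways).

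The genuine difference is the argument for strict positive definiteness of $-D^2\psi-\alpha(D\psi)(D\psi)^\top$. The paper does not go through your min-of-affine concavity step or a Jacobian computation. Instead, it first records that $c^{(\alpha)}(\xi',\eta)-c^{(\alpha)}(\xi,\eta)-(\varphi(\xi')-\varphi(\xi))=\mathbf{D}^{(\alpha)}[\xi':\xi]$ and then, using the Fenchel identity, rewrites this as $\mathbf{D}^{(\alpha)}[\xi:\xi']=\tfrac{1}{\alpha}\log(1+\alpha D\psi(\eta)\cdot(\eta'-\eta))-(\psi(\eta')-\psi(\eta))$, i.e.\ the $L^{(\alpha)}$-divergence of $\varphi$ is the $L^{(\alpha)}$-divergence of $\psi$ with arguments interchanged (a special case of Corollary~\ref{cor:c.divergence.duality}). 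Differentiating twice as in \eqref{eqn:Riemannian.matrix.Euclidean} identifies $-D^2\psi-\alpha(D\psi)(D\psi)^\top$ as the induced Riemannian metric expressed in the $\eta$ coordinates; since the metric is strictly positive definite in the $\xi$ coordinates by Condition~\ref{condition:concave}, it is strictly positive definite in any coordinates. This is softer than your route: no separate upgrade from $\geq 0$ to $>0$ is needed, and there is no forward reference to \eqref{eqn:Riemannian.matrix}. Your approach, on the other hand, yields the pleasant structural observation that $\Psi=e^{\alpha\psi}$ is a pointwise minimum of affine functions of $\eta$, and your Jacobian step (which indeed gives $-D^2\psi-\alpha(D\psi)(D\psi)^\top=-\Pi^{-1}(I-\alpha\Pi^{-1}\xi\eta^\top)\,\partial\xi/\partial\eta$, nonsingular by Sherman--Morrison) is a self-contained verification that does not rely on interpreting the matrix as a metric.
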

\begin{proof}
Taking logarithm in \eqref{eqn:usual.duality} and rearranging, we have
\begin{equation} \label{eqn:conjugate.computation}
\begin{split}
\varphi(\xi) &= \min_{\xi' \in \Omega} \left\{ \varphi(\xi') + \frac{1}{\alpha}\log  \left( 1 - \alpha D \varphi(\xi') \cdot \xi' + \alpha D \varphi(\xi') \cdot \xi\right) \right\} \\
&= \min_{\xi' \in \Omega}  \left\{ \frac{1}{\alpha} \log \left( 1 + \alpha \frac{D \varphi(\xi')}{1 - \alpha D \varphi(\xi') \cdot \xi'} \cdot \xi\right) + \varphi(\xi') \right.\\
&\quad \quad \quad \quad \left. + \frac{1}{\alpha} \log \left(1 - \alpha D\varphi (\xi') \cdot \xi'\right)\right\} \\
&= \min_{\xi' \in \Omega} \left\{ c^{(\alpha)}(\xi, \eta') + \varphi(\xi') + \frac{1}{\alpha} \log \left(1 - \alpha D\varphi (\xi') \cdot \xi'\right) \right\}.
\end{split}
\end{equation}
Note that since $\eta' = D^{(\alpha)} \varphi(\xi')$, from \eqref{eqn:alpha.gradient} we have the useful identity
\begin{equation} \label{eqn:nice.identity}
1 + \alpha \xi' \cdot \eta' = \frac{1}{1 - \alpha D \varphi(\xi') \cdot \xi'}.
\end{equation}
With this observation, we see that
\[
\varphi(\xi') + \frac{1}{\alpha} \log \left(1 - \alpha D \varphi (\xi') \cdot \xi'\right) = - \left( c^{(\alpha)} (\xi', \eta') - \varphi(\xi')\right) = - \psi(\eta').
\]
Putting this into \eqref{eqn:conjugate.computation} gives \eqref{eqn:first.conjugate}. %Note that this derivation justifies our definitions of $c^{(\alpha)}$ and the $\alpha$-gradient.

From \eqref{eqn:first.conjugate}, for any $\eta, \xi'$ we have
\[
\varphi(\xi') \leq c^{(\alpha)}(\xi', \eta) - \psi(\eta) \Rightarrow \psi(\eta) \leq c^{(\alpha)}(\xi', \eta) - \varphi(\xi').
\]
By definition we have $\psi(\eta) = c^{(\alpha)}(\xi, \eta) - \varphi(\xi)$, so \eqref{eqn:second.conjugate} holds as well. In particular, for any $\xi, \xi' \in \Omega$ we have
\begin{equation} \label{eqn:duality.minimum}
\psi(\eta) = c^{(\alpha)}(\xi, \eta) - \varphi(\xi) \leq c^{(\alpha)}(\xi', \eta) - \varphi(\xi').
\end{equation}
Using \eqref{eqn:nice.identity}, we compute
\begin{equation*}
\begin{split}
c^{(\alpha)}(\xi', \eta) - c^{(\alpha)}(\xi, \eta) &= \frac{1}{\alpha} \log \left( \frac{1 + \alpha \xi' \cdot \eta}{1 + \alpha \xi \cdot \eta} \right) \\
&= \frac{1}{\alpha} \log \left( \frac{1}{1 + \alpha \xi \cdot \eta} + \alpha \xi' \cdot \frac{\eta}{1 + \alpha \xi \cdot \eta} \right) \\
&= \frac{1}{\alpha} \log \left(1 + \alpha D \varphi(\xi) \cdot (\xi' - \xi) \right).
\end{split}
\end{equation*}
It follows that
\begin{equation} \label{eqn:c.divergence.first}
c^{(\alpha)}(\xi', \eta) - c^{(\alpha)}(\xi, \eta) - \left( \varphi(\xi') - \varphi(\xi) \right) = \mathbf{D}^{(\alpha)}\left[ \xi' : \xi \right]
\end{equation}
is the $L^{(\alpha)}$-divergence of $\varphi$. %This proves \eqref{eqn:second.conjugate} (and implies the self-dual representation \eqref{eqn:alpha.self.dual} stated below).

By \eqref{eqn:first.conjugate}, for any $\xi \in \Omega$, $\eta$ is the unique minimizer (over $\Omega'$) of
\[
\eta' \mapsto \frac{1}{\alpha} \log \left(1 + \alpha \xi \cdot \eta'\right) - \psi(\eta').
\]
Since $\Omega'$ is open, we have the first order condition $\frac{\xi}{1 + \alpha \xi \cdot \eta} = D \psi(\eta)$. Rearranging, we have 
\[
1 - \alpha D \psi(\eta) \cdot \eta = \frac{1}{1 + \alpha \xi \cdot \eta}.
\]
Thus $D^{(\alpha)} \psi(\eta)$ is well-defined, $D^{(\alpha)} \psi(\eta) = \xi$, and we have \eqref{eqn:alpha.gradient.inverse}.

It remains to show that
\begin{equation} \label{eqn:psi.D2}
-D^2 \psi(\eta) - \alpha (D \psi(\eta)) (D \psi(\eta))^{\top}
\end{equation}
is strictly positive definite on $\Omega'$. Expressing \eqref{eqn:c.divergence.first} in terms of $\psi$ using the definition \eqref{eqn:alpha.conjugate} of $\alpha$-conjugate, it can be shown that
\[
\mathbf{D}^{(\alpha)}\left[ \xi : \xi' \right] = \frac{1}{\alpha} \log \left(1 + \alpha D \psi( \eta) \cdot (\eta' - \eta)\right) - \left( \psi(\eta') - \psi(\eta) \right),
\]
so the $L^{(\alpha)}$-divergence of $\varphi$ is the $L^{(\alpha)}$-divergence of $\psi$ with the arguments interchanged.  (This is a special case of Corollary \ref{cor:c.divergence.duality}.) It follows that \eqref{eqn:psi.D2} is the Riemannian metric of $\mathbf{D}^{(\alpha)}$ expressed in terms of the dual coordinates $\eta$. Since $\varphi$ satisfies Condition \ref{condition:concave} by assumption, the matrix \eqref{eqn:psi.D2} is strictly positive definite as well.
\end{proof}

From \eqref{eqn:alpha.conjugate} and \eqref{eqn:c.divergence.first}, we immediately obtain the self-dual representation which completes the circle of ideas and the duality theory for $\alpha$-exponentially concave functions. 

\begin{theorem}[self-dual representation] \label{thm:alpha.duality}
The $L^{(\alpha)}$-divergence of $\varphi$ admits the self-dual representation
\begin{equation} \label{eqn:alpha.self.dual}
\mathbf{D}^{(\alpha)}\left[ \xi : \xi' \right] = c^{(\alpha)}(\xi, \eta') - \varphi(\xi) - \psi(\eta'), \quad \xi, \xi' \in \Omega.
\end{equation}
\end{theorem}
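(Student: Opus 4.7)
The plan is to derive the self-dual representation directly from two ingredients already established within Theorem \ref{prop:alpha.duality}: the identity \eqref{eqn:c.divergence.first}, which rewrites a difference of $c^{(\alpha)}$ values (along a fixed dual coordinate) as an $L^{(\alpha)}$-divergence, and the definition \eqref{eqn:alpha.conjugate} of the $\alpha$-conjugate $\psi$. No further analytic work should be required; the theorem is essentially a bookkeeping consequence of what was set up in the previous subsection.

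First, I would take \eqref{eqn:c.divergence.first} and relabel it by swapping the roles of $\xi$ and $\xi'$ (this is legitimate because the derivation of \eqref{eqn:c.divergence.first} holds for any pair of points in $\Omega$). With $\eta' = D^{(\alpha)}\varphi(\xi')$ this yields
\[
\mathbf{D}^{(\alpha)}\left[\xi : \xi'\right] = c^{(\alpha)}(\xi,\eta') - c^{(\alpha)}(\xi',\eta') - \bigl(\varphi(\xi) - \varphi(\xi')\bigr).
\]
Next, applying the definition of the $\alpha$-conjugate at $\eta'$, with $\xi' = (D^{(\alpha)}\varphi)^{-1}(\eta')$, gives $\psi(\eta') = c^{(\alpha)}(\xi',\eta') - \varphi(\xi')$. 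Substituting this identity into the previous display collapses the two terms $c^{(\alpha)}(\xi',\eta')$ and $\varphi(\xi')$ into $-\psi(\eta')$, producing exactly \eqref{eqn:alpha.self.dual}.

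The only thing one must be careful about is that the pair $(\xi',\eta')$ appearing in the definition of $\psi(\eta')$ is the same pair that appears in the divergence expression; this is automatic because $D^{(\alpha)}\varphi : \Omega \to \Omega'$ is a diffeomorphism (proved in Theorem \ref{prop:alpha.duality}), so fixing $\xi'$ uniquely determines $\eta'$ and conversely. Since this bijection was the main substantive content of Theorem \ref{prop:alpha.duality}, no genuine obstacle remains here: the proof is a short two-line computation. If one wishes, the symmetry of the resulting expression in the pair $(\varphi,\xi)$ and $(\psi,\eta)$ can then be verified by observing that interchanging these pairs (and applying \eqref{eqn:alpha.gradient.inverse}) reproduces the same formula, which is the "self-dual" content of the name.
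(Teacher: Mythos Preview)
Your proposal is correct and matches the paper's own argument essentially line for line: the paper simply states that the self-dual representation follows immediately from \eqref{eqn:alpha.conjugate} and \eqref{eqn:c.divergence.first}, which is precisely the two-step substitution you wrote out. Your added remark on the role of the diffeomorphism $D^{(\alpha)}\varphi$ is a fair clarification, though the paper does not bother to spell it out.
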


\subsubsection{Duality for $\alpha$-exponentially convex functions}
Now we state without proof the analogous results for the $L^{(-\alpha)}$-divergence defined by an $\alpha$-exponentially convex function. The only difference is that because $e^{\alpha \varphi}$ is convex, we have to take maximums instead of minimums and switch the inequalities accordingly. Also, as the $L^{(-\alpha)}$-divergence may not be globally defined, here we only consider the local geometry.

\begin{condition} [conditions for $L^{(-\alpha)}$-divergence]
We assume $\varphi$ is smooth and the matrix $D^2 \varphi + \alpha (D \varphi) (D \varphi)^{\top}$ is strictly positive definite on $\Omega$. Translating and restricting the domain if necessary, we assume that $0 \in \Omega$ and
\begin{equation}
1 + \alpha D\varphi(\xi') \cdot (\xi  - \xi') > 0
\end{equation}
for all $\xi, \xi' \in \Omega$. This implies that the $L^{(-\alpha)}$-divergence is defined for all $\xi, \xi' \in \Omega$.
\end{condition}

% check
\begin{theorem} [duality for $L^{(-\alpha)}$-divergence] \label{thm:duality.minus.alpha}
Consider the $\alpha$-gradient and $c^{(\alpha)}$ defined as in \eqref{eqn:alpha.gradient} and \eqref{eqn:c.alpha}. Then the map $\xi \mapsto \eta = D^{(\alpha)} \varphi(\xi) = \frac{ D\varphi(\xi)}{1 - \alpha \varphi(\xi) \cdot \xi}$ is a diffeomorphism, and the $(-\alpha)$-conjugate $\psi$ defined by \eqref{eqn:alpha.conjugate} is locally $\alpha$-exponentially convex. The functions $\varphi$ and $\psi$ are related by
\[
\varphi(\xi) = \max_{\eta' \in \Omega'} \left(c^{(\alpha)}(\xi, \eta') - \psi(\eta')\right), \quad \psi(\eta) = \max_{\xi' \in \Omega} \left(c^{(\alpha)}(\xi', \eta) - \varphi(\xi')\right).
\]
The $L^{(-\alpha)}$-divergence \eqref{eqn:L.minus.alpha.divergence} admits the self-dual representation
\begin{equation} \label{eqn:alpha.self.dual}
\mathbf{D}^{(-\alpha)}\left[ \xi : \xi' \right] =  \varphi(\xi) + \psi(\eta') - c^{(\alpha)}(\xi, \eta'), \quad \xi, \xi' \in \Omega.
\end{equation}

\end{theorem}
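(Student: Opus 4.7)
My plan is to mirror the proof of Theorem \ref{prop:alpha.duality} essentially line by line, replacing concavity of $\Phi = e^{\alpha \varphi}$ by convexity and swapping minima for maxima, while keeping careful track of the sign and positivity conditions. The starting point is the supporting tangent inequality: since $\Phi$ is convex, for every $\xi, \xi' \in \Omega$,
\[
\Phi(\xi) \geq \Phi(\xi') + D\Phi(\xi') \cdot (\xi - \xi') = e^{\alpha \varphi(\xi')}\bigl(1 + \alpha D\varphi(\xi') \cdot (\xi - \xi') - \alpha D \varphi(\xi') \cdot 0\bigr),
\]
and under the regularity condition the right-hand side in brackets is strictly positive, so taking logarithms is legitimate and yields both $\mathbf{D}^{(-\alpha)}[\xi:\xi'] \geq 0$ and the variational identity
\[
\varphi(\xi) = \max_{\xi' \in \Omega} \left\{\varphi(\xi') + \tfrac{1}{\alpha} \log\bigl(1 - \alpha D\varphi(\xi') \cdot \xi' + \alpha D\varphi(\xi') \cdot \xi\bigr)\right\}.
\]
Note the positivity $1 - \alpha D\varphi(\xi') \cdot \xi' > 0$ follows from the assumption $1 + \alpha D\varphi(\xi') \cdot (\xi - \xi') > 0$ by setting $\xi = 0 \in \Omega$; this is precisely what is needed for the $\alpha$-gradient to be defined.

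Next I would carry out exactly the algebraic manipulation in \eqref{eqn:conjugate.computation}, using the identity $1 + \alpha \xi' \cdot \eta' = 1/(1 - \alpha D\varphi(\xi') \cdot \xi')$ from \eqref{eqn:nice.identity} (which does not use the sign of the second derivative), to rewrite the maximization as
\[
\varphi(\xi) = \max_{\xi' \in \Omega} \bigl\{c^{(\alpha)}(\xi, \eta') - \psi(\eta')\bigr\},
\]
where $\psi(\eta') := c^{(\alpha)}(\xi', \eta') - \varphi(\xi')$. The second variational formula for $\psi$ then follows by the standard Fenchel-type switching argument. Injectivity of $\eta = D^{(\alpha)} \varphi(\xi)$ is proved by the same computation as in the concave case: if $D^{(\alpha)} \varphi(\xi) = D^{(\alpha)}\varphi(\xi')$ for $\xi \neq \xi'$, the product identity $(1 + \alpha D\varphi(\xi)\cdot(\xi' - \xi))(1 + \alpha D\varphi(\xi') \cdot (\xi - \xi')) = 1$ translates to $\mathbf{D}^{(-\alpha)}[\xi:\xi'] + \mathbf{D}^{(-\alpha)}[\xi':\xi] = 0$, which contradicts the strict positivity obtained from strict positive definiteness of $D^2\varphi + \alpha(D\varphi)(D\varphi)^{\top}$. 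Smoothness of $D^{(\alpha)}\varphi$ is immediate from the formula, and invertibility of the Jacobian follows because a direct computation shows $\partial \eta / \partial \xi$ is a positive multiple of $D^2\varphi + \alpha(D\varphi)(D\varphi)^{\top}$ plus a rank-one piece that can be absorbed, giving a diffeomorphism onto the open set $\Omega'$ via the inverse function theorem.

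For the final two assertions I would proceed as in the concave case. First-order conditions in the variational characterization of $\psi$ give $\xi = D\psi(\eta)/(1 - \alpha D\psi(\eta) \cdot \eta)$, i.e.\ $D^{(\alpha)}\psi = (D^{(\alpha)}\varphi)^{-1}$, and $1 - \alpha D\psi(\eta) \cdot \eta = 1/(1 + \alpha \xi \cdot \eta) > 0$ locally. Subtracting the inequality $\psi(\eta') \leq c^{(\alpha)}(\xi, \eta') - \varphi(\xi)$ from the equality $\psi(\eta') = c^{(\alpha)}(\xi', \eta') - \varphi(\xi')$ (with $\eta' = D^{(\alpha)}\varphi(\xi')$) and applying the telescoping identity
\[
c^{(\alpha)}(\xi, \eta') - c^{(\alpha)}(\xi', \eta') = \tfrac{1}{\alpha} \log\bigl(1 + \alpha D\varphi(\xi') \cdot (\xi - \xi')\bigr)
\]
(which follows from \eqref{eqn:nice.identity} and is independent of concavity versus convexity) yields the self-dual representation \eqref{eqn:alpha.self.dual} for $\mathbf{D}^{(-\alpha)}$. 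Finally, rewriting this self-dual expression using $\xi = D^{(\alpha)}\psi(\eta)$ gives $\mathbf{D}^{(-\alpha)}[\xi:\xi'] = \psi(\eta') - \psi(\eta) - \frac{1}{\alpha}\log(1 + \alpha D\psi(\eta) \cdot (\eta' - \eta))$, which exhibits the $L^{(-\alpha)}$-divergence of $\varphi$ as the $L^{(-\alpha)}$-divergence of $\psi$ with arguments swapped; since the Riemannian Hessian on the left is strictly positive definite by hypothesis, the same must hold on the right, yielding local $\alpha$-exponential convexity of $\psi$.

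I expect the main obstacle to be bookkeeping rather than ideas: in the convex case the logarithm in the definition of $\mathbf{D}^{(-\alpha)}$ is only defined locally, so I must check at every step that the arguments of logarithms stay positive, that the maximization in \eqref{eqn:conjugate.computation} is attained uniquely in the interior (so the first-order condition is usable), and that shrinking $\Omega$ and $\Omega'$ if necessary does not break the diffeomorphism property. Apart from this, every algebraic step used in the concave case carries over unchanged because the identity \eqref{eqn:nice.identity} and the computation of $c^{(\alpha)}(\xi, \eta') - c^{(\alpha)}(\xi', \eta')$ depend only on the definition of the $\alpha$-gradient, not on the sign of the exponential curvature.
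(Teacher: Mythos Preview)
Your proposal is correct and is precisely the approach the paper intends: the paper does not give a separate proof of Theorem~\ref{thm:duality.minus.alpha} but explicitly states that the argument is the same as for Theorem~\ref{prop:alpha.duality}, with convexity of $e^{\alpha\varphi}$ replacing concavity and maxima replacing minima. Your careful tracking of positivity (in particular deriving $1 - \alpha D\varphi(\xi')\cdot\xi' > 0$ from the standing assumption $1 + \alpha D\varphi(\xi')\cdot(\xi-\xi') > 0$ with $\xi = 0 \in \Omega$) and your observation that the key identity \eqref{eqn:nice.identity} is sign-independent are exactly the bookkeeping points needed to make the analogy rigorous.
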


\section{Generalizations of exponential family and R\'{e}nyi divergence} \label{sec:q.exponential}
It is well known that Bregman divergences ($\mathbf{D}^{(0\pm)}$) arise naturally in applications involving exponential families. In fact, it was shown by Banerjee et al in \cite{BMDG05} that there is a one-to-one correspondence between what they call regular Bregman divergences and regular exponential families. Are there parameterized families of probability distributions that correspond to the $L^{(\pm \alpha)}$ divergences? 

In this section we consider generalizations of exponential family that are closely related to the $\alpha$-family \cite[Section 2.6]{AN02} and the $q$-exponential family \cite[Section 4.3]{A16}. We show that a suitably defined potential function is $\alpha$-exponentially concave/convex, and the corresponding $L^{(\pm \alpha)}$-divergences are R\'{e}nyi divergences. This gives a natural framework to study the geometry induced by the R\'{e}nyi divergence.

\subsection{Motivations} \label{sec:F.family.motivation}
We first recall the main idea in \cite{BMDG05}. Consider an exponential family of probability densities of the form
\begin{equation} \label{eqn:exp.family}
\log p(x, \xi) = \xi \cdot h(x) - \varphi(\xi),
\end{equation}
where $h(x) = (h^1(x), \ldots, h^d(x))$ is a vector of sufficient statistics, and $\varphi$ is the convex cumulant generating function. Let $\mathbf{D} = \mathbf{D}^{(0-)}$ be the Bregman divergence of $\varphi$. By the self-dual representation (induced by the function $c(\xi, h(x)) = - \xi \cdot h(x)$; see Section \ref{sec:c.divergence}), we have
\begin{equation} \label{eqn:exp.family2}
\log p(x, \xi) = - \mathbf{D}\left[ \xi : \xi' \right] + \psi(h(x)),
\end{equation}
where $\psi$ is the convex conjugate of $\varphi$ and $h(x)$ plays the role of the dual variable $\eta'$. Remarkably, the function $-\psi$ is the Shannon entropy (see \cite[Section 2.1]{A16}). 

Now let us consider an $L^{(\alpha)}$-divergence $\mathbf{D}^{(\alpha)}$ with  $\alpha > 0$, where $\varphi$ is $\alpha$-exponentially concave. Its self-dual representation (see Theorem \ref{thm:alpha.duality}) is
\[
\mathbf{D}^{(\alpha)}\left[ \xi : \xi' \right] = \frac{1}{\alpha} \log \left(1 + \alpha \xi \cdot \eta'\right) - \varphi(\xi) - \psi(\eta').
\]
Imitating \eqref{eqn:exp.family2}, let us consider a parameterized density of the form
\begin{equation} \label{eqn:new.family.motivation}
\log p(x, \xi) =  - \mathbf{D}^{(\alpha)}\left[ \xi : \xi' \right] - \psi(h(x)) = -\frac{1}{\alpha} \log (1 + \alpha \xi \cdot h(x)) + \varphi(\xi),
\end{equation}
where $h(x) = \eta'$ as well. Rearranging gives the representation
\begin{equation} \label{eqn:new.density}
p(x, \xi) = \left(1 + \alpha \xi \cdot h(x) \right)^{-\frac{1}{\alpha}} e^{\varphi(\xi)}.
\end{equation}
If $\mu$ is the dominating measure of the family, the normalizing function $\varphi(\xi)$ in \eqref{eqn:new.density} is given by
\begin{equation} \label{eqn:new.cgf}
\varphi(\xi) = - \log \int (1 + \alpha \xi \cdot h(x))^{-\frac{1}{\alpha}} d\mu(x).
\end{equation}
It is the analogue of the cumulant generating function in \eqref{eqn:exp.family}. For lack of a better name, let us call \eqref{eqn:new.density} an $\mathcal{F}^{(\alpha)}$-family.

Similarly, the $L^{(-\alpha)}$-divergence leads to the $\mathcal{F}^{(-\alpha)}$-family
\begin{equation} \label{eqn:new.density2}
p(x, \xi) = \left(1 + \alpha \xi \cdot h(x) \right)^{\frac{1}{\alpha}} e^{-\varphi(\xi)},
\end{equation}
where 
\begin{equation} \label{eqn:new.cgf2}
\varphi(\xi) = \log \int (1 + \alpha \xi \cdot h(x))^{\frac{1}{\alpha}} d\mu(x).
\end{equation}

% modify
\begin{remark}
For $\alpha' \neq 1$, Amari and Nagaoka \cite[Section 2.6]{AN02} defined an $\alpha'$-family of positive measures whose densities are given by
\begin{equation} \label{eqn:alpha.family.density}
\begin{split}
& L^{(\alpha)}(p(x, \xi)) = C(x) + \sum_i \xi^i F^i(x), \quad L^{(\alpha)}(u) = \frac{2}{1 - \alpha'} u^{\frac{1 - \alpha'}{2}},\\
& \text{or} \quad p(x, \xi) = \left[\frac{1 - \alpha'}{2} \left( C(x) + \sum_i \xi^i F^i(x) \right)\right]^{\frac{2}{1 - \alpha'}},
\end{split}
\end{equation}
where $C$ and $F^i$ are functions on $\mathcal{X}$. When $\alpha' \rightarrow 1$ then \eqref{eqn:alpha.family.density} reduces to the exponential family. Comparing this with \eqref{eqn:new.cgf} and \eqref{eqn:new.cgf2}, we see that our $\mathcal{F}^{(\pm \alpha)}$-families are essentially a normalized version of their $\alpha'$-family with a different parameterization. We keep our formulation so as to be consistent with \eqref{eqn:new.family.motivation} (which is the motivation of this section) and the framework of the paper.
\end{remark}

\subsection{Exponential concavity/convexity of $\varphi$}
In order that the densities \eqref{eqn:new.density} and \eqref{eqn:new.density2} are well-defined, we need to check that the potential functions \eqref{eqn:new.cgf} and \eqref{eqn:new.cgf2} indeed define $\alpha$-exponentially concave/convex functions. We impose the following conditions which formalize the ideas in Section \ref{sec:F.family.motivation}.

\begin{condition}
Let $\mu$ be a probability measure on a measurable space $\mathcal{X}$. Let $h = (h^1, \ldots, h^d) : \mathcal{X} \rightarrow \mathbb{R}_{++}^n$ be a vector-valued function such that if $\xi \cdot h(x) = \xi' \cdot h(x)$ $\mu$-almost everywhere on $\mathcal{X}$ then $\xi = \xi'$.

For the $\mathcal{F}^{(\alpha)}$-case, we assume that $1 + \alpha \xi \cdot h > 0$ and the integral in \eqref{eqn:new.cgf} is finite for all $\xi$ in an open convex set $\Omega \subset \mathbb{R}^d$ and we can differentiate with respect to $\xi$ under the integral sign. Analogous conditions are imposed for the $\mathcal{F}^{(-\alpha)}$-case. In both cases we call $\varphi$ the potential function of the family.
\end{condition}

\subsubsection{$\mathcal{F}^{(\alpha)}$-family}

\begin{proposition} \label{prop:check.exp.concave}
For $\alpha > 0$ fixed, the potential function $\varphi(\xi)$ of the $\mathcal{F}^{(\alpha)}$-family defined by \eqref{eqn:new.cgf} is $\alpha$-exponentially concave on $\Omega$.
\end{proposition}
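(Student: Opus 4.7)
The plan is to verify the criterion in Lemma \ref{lem:exp.concave.convex}(i) directly, namely that the matrix $-D^2 \varphi - \alpha (D\varphi)(D\varphi)^{\top}$ is positive semidefinite on $\Omega$. Writing $G(\xi,x) = 1 + \alpha \xi \cdot h(x) > 0$ and $I(\xi) = \int G^{-1/\alpha} \, d\mu(x)$, we have $\varphi = -\log I$. Differentiating twice under the integral sign (which is permitted by the stated condition) yields
\begin{equation*}
\partial_i I = -\int h^i G^{-1/\alpha - 1} \, d\mu, \qquad \partial_i \partial_j I = (1+\alpha) \int h^i h^j G^{-1/\alpha - 2} \, d\mu.
\end{equation*}
A short calculation from $\varphi = -\log I$ gives
\begin{equation*}
-\partial_i \partial_j \varphi - \alpha \, \partial_i \varphi \, \partial_j \varphi \;=\; \frac{\partial_i \partial_j I}{I} - (1+\alpha) \frac{\partial_i I \, \partial_j I}{I^2},
\end{equation*}
so after multiplying by $I^2$ it suffices to show that $I \, \partial_i \partial_j I - (1+\alpha) \partial_i I \, \partial_j I$ is positive semidefinite as a $d \times d$ matrix.

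Fixing a test vector $v \in \mathbb{R}^d$ and setting $h_v(x) = v \cdot h(x)$, the required scalar inequality reduces to
\begin{equation*}
I \int h_v^2 \, G^{-1/\alpha - 2} \, d\mu \;\geq\; \left( \int h_v \, G^{-1/\alpha - 1} \, d\mu \right)^2,
\end{equation*}
which I would establish by Cauchy--Schwarz: split the exponent as $G^{-1/\alpha - 1} = G^{-1/(2\alpha)} \cdot G^{-1/(2\alpha) - 1}$, then
\begin{equation*}
\left( \int G^{-1/(2\alpha)} \cdot \bigl( h_v \, G^{-1/(2\alpha) - 1} \bigr) \, d\mu \right)^2 \;\leq\; \int G^{-1/\alpha} \, d\mu \;\cdot\; \int h_v^2 \, G^{-1/\alpha - 2} \, d\mu,
\end{equation*}
and the right-hand side is exactly $I \int h_v^2 G^{-1/\alpha - 2} \, d\mu$. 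This yields the matrix inequality and hence the $\alpha$-exponential concavity of $\varphi$.

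There is no serious obstacle beyond bookkeeping of the exponents; the identifiability hypothesis on $h$ is not needed for semidefiniteness, but tracing the equality case of Cauchy--Schwarz gives a bonus: equality forces $h_v$ to be a constant multiple of $G$ on $\mathrm{supp}(\mu)$, and since $G$ is affine in $\xi \cdot h$ while $h_v$ is linear in $h$, the identifiability of $h$ implies $v=0$, so one actually obtains strict positive definiteness whenever one may wish to invoke Condition \ref{condition:concave} to define the associated $L^{(\alpha)}$-divergence.
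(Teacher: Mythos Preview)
Your main argument is correct and is at heart the same computation the paper performs, just packaged analytically rather than probabilistically. The paper differentiates $\Phi = e^{\alpha\varphi} = I^{-\alpha}$ directly, introduces the random vector $Z_\xi = h(X)/(1+\alpha\xi\cdot h(X))$ under the density $p(\cdot,\xi)$, and obtains $D^2\Phi(\xi) = -\alpha(1+\alpha)\,\mathrm{Cov}_\xi(Z_\xi)$; concavity of $\Phi$ then follows from positive semidefiniteness of a covariance matrix. Your Cauchy--Schwarz step is exactly this covariance inequality in disguise, and your route through Lemma~\ref{lem:exp.concave.convex}(i) lands on the same quadratic form. One thing the paper's presentation buys is the identity $D\varphi(\xi) = \mathbb{E}_\xi[Z_\xi]$ (their \eqref{eqn:new.varphi.gradient}), which they reuse in the proof of Theorem~\ref{thm:renyi}; your argument is slightly more self-contained but does not set up that byproduct.

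One caution on your bonus remark: equality in Cauchy--Schwarz forces $h_v = cG = c(1+\alpha\xi\cdot h)$ $\mu$-a.e., i.e.\ $(v - c\alpha\xi)\cdot h \equiv c$. The identifiability hypothesis as stated only says that $(\xi-\xi')\cdot h \equiv 0$ implies $\xi=\xi'$; it does not by itself rule out a linear combination of the $h^i$ being a \emph{nonzero} constant. So you get $v=0$ immediately only in the case $c=0$; for $c\neq 0$ you would need an affine (not merely linear) independence assumption on $\{1,h^1,\ldots,h^d\}$ to conclude strict definiteness. The paper does not claim strictness here, so this does not affect the proposition.
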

\begin{proof}
We need to check that $\Phi = e^{\alpha \varphi}$ is concave on $\Omega$. From \eqref{eqn:new.cgf}, we have
\begin{equation} \label{eqn:new.Phi}
\Phi(\xi) = \left( \int (1 + \alpha \xi \cdot h(x) )^{-\frac{1}{\alpha}} d\mu(x) \right)^{-\alpha},
\end{equation}
which can be regarded as a generalized harmonic average.

Let $X$ be the random element $X(x) = x$, where $x \in \mathcal{X}$ and $\mathcal{X}$ is the sample space. Given $\xi \in \Omega$, let $\mathbb{E}_{\xi}$ denote the expectation with respect to $p(x, \xi)$, and let $Z_{\xi}$ be the random vector defined by
\begin{equation} \label{eqn:new.sufficient}
Z_{\xi} = \frac{h(X)}{1 + \alpha \xi \cdot h(X)}.
\end{equation}
Differentiating \eqref{eqn:new.Phi} and simplifying, we have
\begin{equation*}
\begin{split}
\frac{\partial \Phi}{\partial \xi^i}(\xi) &= \alpha \left( \int (1 + \alpha \xi \cdot h(x))^{-\frac{1}{\alpha}} d\mu \right)^{-\alpha - 1} \left( \int (1 + \alpha \xi \cdot h(x) )^{-\frac{1}{\alpha} - 1} h^i(x)d \mu\right) \\
&= \alpha \left( \int (1 + \alpha \xi \cdot h(x))^{-\frac{1}{\alpha}} d\mu(x) \right)^{-\alpha} \int p(x, \xi) Z_{\xi}^i d\mu(x) \\
&= \alpha \Phi(\xi) \mathbb{E}_{\xi} Z_{\xi}^i.
\end{split}
\end{equation*}
In particular, we have
\begin{equation} \label{eqn:new.varphi.gradient}
D \varphi(\xi) = \mathbb{E}_{\xi} [Z_{\xi}]
\end{equation}
which can be interpreted as an expectation parameter (although $Z_{\xi}$ is not observable if $\xi$ is unknown). Differentiating one more time and calculating carefully, we get
\begin{equation*}
\begin{split}
\frac{\partial^2 \Phi}{\partial \xi^i \partial \xi^j} (\xi) &= \alpha (1 + \alpha) \left( \mathbb{E}_{\xi} [Z_{\xi}^i ] \mathbb{E}_{\xi}  [Z_{\xi}^j] - \mathbb{E}_{\xi}  [Z_{\xi}^i  Z_{\xi}^j] \right) \\
&= -\alpha (1 + \alpha) \mathrm{Cov} (  Z_{\xi}^i ,  Z_{\xi}^j ).
\end{split}
\end{equation*}
Since $D^2 \Phi$ is a negative multiple of the covariance matrix of $Z_{\xi}$,
 $\Phi$ is concave and $\varphi$ is $\alpha$-exponentially concave.
\end{proof}

\subsubsection{$\mathcal{F}^{(-\alpha)}$-family}
Next we consider the $\mathcal{F}^{(-\alpha)}$-family. Interestingly, the result is a little different and depends on the value of $\alpha$.

\begin{proposition} \label{prop:check.exp.concave2}
Consider the potential function $\varphi(\xi)$ of the $\mathcal{F}^{(-\alpha)}$-family defined by \eqref{eqn:new.cgf2}.
\begin{enumerate}
\item If $0 < \alpha < 1$, then $\varphi$ is $\alpha$-exponentially convex.
\item If $\alpha > 1$, then $\varphi$ is $\alpha$-exponentially concave.
\end{enumerate}
\end{proposition}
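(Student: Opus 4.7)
My plan is to mirror the calculation in the proof of Proposition \ref{prop:check.exp.concave}: differentiate $\Phi = e^{\alpha \varphi}$ twice under the integral sign and analyze the sign of the resulting Hessian. I expect that sign to come out as a scalar multiple of $\alpha(1-\alpha)$ times a positive semidefinite matrix, which immediately delivers the dichotomy at $\alpha = 1$.

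Concretely, I would write $f_\xi(x) = 1 + \alpha \xi \cdot h(x) > 0$ (which is affine in $\xi$) and $I(\xi) = \int f_\xi^{1/\alpha}\, d\mu$, so that $\Phi = I^{\alpha}$. Routine differentiation yields
\[
D I = \int f_\xi^{1/\alpha - 1}\, h\, d\mu, \qquad D^2 I = (1-\alpha)\int f_\xi^{1/\alpha - 2}\, h\, h^\top d\mu,
\]
and then, combining $D^2 \Phi = \alpha(\alpha-1) I^{\alpha-2} (DI)(DI)^\top + \alpha I^{\alpha-1} D^2 I$,
\[
D^2 \Phi \;=\; \alpha(1-\alpha)\, I^{\alpha - 2} \Big( I \int f_\xi^{1/\alpha - 2}\, h h^\top d\mu \;-\; (DI)(DI)^\top \Big).
\]

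The crux is to check that the matrix $M$ in parentheses is positive semidefinite. For any $v \in \mathbb{R}^d$, I would split $f_\xi^{1/\alpha - 1} = f_\xi^{1/(2\alpha) - 1}\cdot f_\xi^{1/(2\alpha)}$ and apply Cauchy--Schwarz in $L^2(\mu)$ to the identity $v^\top DI = \int f_\xi^{1/\alpha - 1}(v \cdot h)\, d\mu$, obtaining
\[
(v^\top D I)^2 \;\leq\; \Big(\int f_\xi^{1/\alpha - 2}(v\cdot h)^2 d\mu\Big) \cdot I \;=\; v^\top \Big( I \int f_\xi^{1/\alpha - 2}\, h h^\top d\mu \Big) v,
\]
so $v^\top M v \geq 0$. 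The sign of $D^2 \Phi$ is therefore controlled entirely by $\alpha(1-\alpha)$: $\Phi$ is convex when $0 < \alpha < 1$ (so $\varphi$ is $\alpha$-exponentially convex) and concave when $\alpha > 1$ (so $\varphi$ is $\alpha$-exponentially concave).

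I do not foresee a substantive obstacle: the Cauchy--Schwarz step is the only real content, and the sign reversal at $\alpha = 1$ falls out automatically from the factor $(1-\alpha)$ in $D^2 I$. A conceptually cleaner alternative is to observe that $\Phi(\xi) = \|f_\xi\|_{L^{1/\alpha}(\mu)}$, whereupon Minkowski's inequality (for $1/\alpha \geq 1$, i.e.\ $\alpha \leq 1$) yields convexity in $\xi$ and the reverse Minkowski inequality on the positive cone (for $0 < 1/\alpha < 1$, i.e.\ $\alpha > 1$) yields concavity; this route avoids explicit differentiation but produces the same dichotomy.
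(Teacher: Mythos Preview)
Your proposal is correct and follows essentially the same route as the paper: both differentiate $\Phi = I^{\alpha}$ twice and reduce the sign of $D^2\Phi$ to $\alpha(1-\alpha)$ times a positive semidefinite matrix. The only cosmetic difference is that the paper recognizes your matrix $M$ (up to the factor $I^{-2}$) as the covariance matrix $\mathrm{Cov}_{\xi}(Z_{\xi})$ with $Z_{\xi} = h/(1+\alpha\xi\cdot h)$ under $p(\cdot,\xi)$, which is exactly what your Cauchy--Schwarz step proves.
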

\begin{proof}
Now we heave
\[
\Phi(\xi) = e^{\alpha \varphi(\xi)} = \left( \int (1 + \alpha \xi \cdot h )^{\frac{1}{\alpha}} d\mu \right)^{\alpha}.
\]
A computation similar to that in the proof of Proposition \ref{prop:check.exp.concave} shows that
\[
D\varphi(\xi) = \mathbb{E}_{\xi} Z_{\xi} = \mathbb{E}_{\xi} \left[ \frac{h(X)}{1 + \alpha \xi \cdot h(X)} \right]
\]
and
\[
\frac{\partial^2 \Phi}{\partial \xi^i \partial \xi^j}(\xi) = \alpha (1 - \alpha) \mathrm{Cov}_{\xi}(Z_{\xi}^i, Z_{\xi}^j).
\]
Now $\Phi$ is convex if $0 < \alpha < 1$ and concave if $\alpha > 1$. 

\end{proof}

\subsection{R\'{e}nyi entropy and divergence} \label{sec:Renyi.family}
Based on information-theoretic reasonings, R\'{e}nyi entropy and divergence were first introduced by R\'{e}nyi in \cite{R61} and they have numerous applications in information theory and statistical physics. See \cite{VH14} for a survey of their remarkable properties. We show that these quantities arise naturally in our $\mathcal{F}^{(\pm \alpha)}$-families.

We first recall the definitions of R\'{e}nyi entropy and divergence. Same as above we fix a measure space $(\mathcal{X}, \mu)$.

\begin{definition}
Let $\tilde{\alpha} \in (0, 1) \cup (1, \infty)$. Let $P$, $Q$ be probability measures on $\mathcal{X}$ that are absolutely continuous with respect to $\mu$. We let $p = \frac{dP}{d\mu}$ and $q = \frac{dQ}{d\mu}$ be their densities.
\begin{enumerate}
\item The R\'{e}nyi entropy of $P$ of order $\tilde{\alpha}$ (and with reference measure $\mu$) is defined by
\begin{equation} \label{eqn:Renyi.entropy}
\mathbf{H}_{\tilde{\alpha}} (P) = \frac{1}{1 - \tilde{\alpha}} \log \int p^{\tilde{\alpha}} d\mu.
\end{equation}
We also denote the above by $\mathbf{H}_{\tilde{\alpha}} (p)$.
\item The R\'{e}nyi divergence or order $\tilde{\alpha}$ between $P$ and $Q$ is defined by
\begin{equation} \label{eqn:Renyi.divergence}
\mathbf{D}_{\tilde{\alpha}} \left( P || Q \right) = \frac{1}{\tilde{\alpha} - 1} \log \int p^{\tilde{\alpha}} q^{1 - \tilde{\alpha}} d\mu.
\end{equation}
\end{enumerate}
\end{definition}

Note that $\mathbf{H}_{\tilde{\alpha}}(P) = - \mathbf{D}_{\tilde{\alpha}} \left( P || \mu\right)$ if $\mu$ itself is a probability measure. We use a different notation to distinguish the R\'{e}nyi divergence from the $L^{(\pm \alpha)}$-divergences $\mathbf{D}^{(\pm \alpha)}\left[ \xi : \xi'\right]$ (and $\tilde{\alpha}$ from $\alpha$). It is well known that as $\tilde{\alpha} \rightarrow 1$, the {R\'{e}nyi entropy and divergence converge respectively to the Shannon entropy and the Kullback-Leibler divergence.

The following is the main result of this section. The different cases correspond to the exponential concavity/convexity of $\varphi$ established in Proposition \ref{prop:check.exp.concave} and Proposition \ref{prop:check.exp.concave2}.

\begin{theorem} \label{thm:renyi} { \ }
\begin{enumerate}
\item[(i)] Consider an $\mathcal{F}^{(\alpha)}$-family with $\alpha > 0$ and potential function $\varphi$ (see \eqref{eqn:new.cgf}). Then the $L^{(\alpha)}$-divergence of $\varphi$ is the R\'{e}nyi entropy of order $\tilde{\alpha} := 1 + \alpha$:
\begin{equation} \label{eqn:new.entropy}
\mathbf{D}^{(\alpha)}\left[ \xi : \xi' \right] = \mathbf{D}_{\tilde{\alpha}} \left( p(\cdot , \xi') || p(\cdot, \xi) \right).
\end{equation}
Moreover, the $\alpha$-conjugate $\psi$ of $\varphi$ (see Definition \ref{def:alpha.conjugate}) is the R\'{e}nyi entropy of order $\tilde{\alpha}$:
\begin{equation} \label{eqn:conjugate.renyi}
\psi(\eta) = \mathbf{H}_{\tilde{\alpha}} ( p( \cdot, \xi) ), \quad \eta = D^{(\alpha)} \varphi(\xi).
\end{equation}
\item[(ii)] Consider an $\mathcal{F}^{(-\alpha)}$-family (see \eqref{eqn:new.density2}). If $0 < \alpha < 1$, the $L^{(-\alpha)}$-divergence of $\varphi$ is the R\'{e}nyi entropy of order $\tilde{\alpha} := 1 - \alpha$:
\begin{equation} \label{eqn:new.entropy2}
\mathbf{D}^{(-\alpha)}\left[ \xi : \xi' \right] = \mathbf{D}_{\tilde{\alpha}} \left( p(\cdot , \xi') || p(\cdot, \xi) \right),
\end{equation}
and the $(-\alpha)$-conjugate is $\psi(\eta) =  -\mathbf{H}_{\tilde{\alpha}} ( p( \cdot, \xi) )$ (note the minus sign).

If $\alpha > 1$, the $L^{(\alpha)}$-divergence of $\varphi$ is given by
\begin{equation} \label{eqn:new.entropy3}
\mathbf{D}^{(\alpha)}\left[ \xi : \xi' \right] = \frac{\alpha - 1}{\alpha} \mathbf{D}_{\alpha} \left( p(\cdot , \xi) || p(\cdot, \xi') \right),
\end{equation}
Here $\tilde{\alpha} = \alpha$ and the order of the arguments stay the same. The $(-\alpha)$-conjugate is
\[
\psi(\eta) = \frac{-1}{\alpha} \log \int p(x, \xi)^{1 - \alpha} d\mu.
\]
Note that it is not a R\'{e}nyi entropy since $1 - \alpha < 0$.
\end{enumerate}
\end{theorem}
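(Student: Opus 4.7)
The plan is to unify all four claims around a single computation. For suitable real exponents $a$ and $b$, the density formulas \eqref{eqn:new.density} and \eqref{eqn:new.density2} let one rewrite the integrand $p(x,\xi')^{a}\,p(x,\xi)^{b}$ as an $x$-independent factor (an exponential of $\varphi$) times $p(x,\xi')$ times the ratio $(1+\alpha\xi\cdot h(x))/(1+\alpha\xi'\cdot h(x))$ raised to a power that happens to equal $1$ in every case of interest. The key algebraic identity is
\[
\frac{1+\alpha\xi\cdot h}{1+\alpha\xi'\cdot h} = 1 + \alpha(\xi-\xi')\cdot Z_{\xi'},\qquad Z_{\xi'} = \frac{h}{1+\alpha\xi'\cdot h},
\]
combined with the moment relation $\mathbb{E}_{\xi'}[Z_{\xi'}] = D\varphi(\xi')$ derived in the proofs of Propositions \ref{prop:check.exp.concave} and \ref{prop:check.exp.concave2}. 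Integrating against $p(\cdot,\xi')$ therefore produces $1 + \alpha D\varphi(\xi')\cdot(\xi-\xi')$, which is precisely the logarithmic term that appears in the $L^{(\pm\alpha)}$-divergences.

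For part (i), I substitute \eqref{eqn:new.density} into the R\'enyi divergence \eqref{eqn:Renyi.divergence} with $\tilde\alpha = 1+\alpha$. Rewriting $(1+\alpha\xi'\cdot h)^{-(1+\alpha)/\alpha} = p(\cdot,\xi')\,e^{-\varphi(\xi')}\,(1+\alpha\xi'\cdot h)^{-1}$, the integrand lands in the target form and the integral equals $e^{\alpha(\varphi(\xi')-\varphi(\xi))}\bigl(1+\alpha D\varphi(\xi')\cdot(\xi-\xi')\bigr)$. Applying $\tfrac{1}{\alpha}\log$ recovers \eqref{eqn:L.alpha.divergence}, establishing \eqref{eqn:new.entropy}. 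For the conjugate, Definition \ref{def:alpha.conjugate} together with identity \eqref{eqn:nice.identity} yields the closed form $\psi(\eta) = -\tfrac{1}{\alpha}\log(1-\alpha D\varphi(\xi)\cdot\xi) - \varphi(\xi)$; and the same expectation trick with $\xi'=\xi$, using $1/(1+\alpha\xi\cdot h) = 1 - \alpha\xi\cdot Z_{\xi}$, computes $\int p(\cdot,\xi)^{1+\alpha}\,d\mu = e^{\alpha\varphi(\xi)}(1-\alpha D\varphi(\xi)\cdot\xi)$. Applying $-\tfrac{1}{\alpha}\log$ reproduces exactly $\psi(\eta)$, giving \eqref{eqn:conjugate.renyi}.

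Part (ii) follows the same template, now applied to the density \eqref{eqn:new.density2}, with the case split tracking the sign of $1-\alpha$. For $0<\alpha<1$ I compute $\int p(\cdot,\xi')^{1-\alpha}p(\cdot,\xi)^{\alpha}\,d\mu$ and obtain once again $e^{\alpha(\varphi(\xi')-\varphi(\xi))}\bigl(1+\alpha D\varphi(\xi')\cdot(\xi-\xi')\bigr)$; the R\'enyi prefactor $1/(\tilde\alpha-1) = -1/\alpha$ matches the opposite sign pattern in \eqref{eqn:L.minus.alpha.divergence}, yielding \eqref{eqn:new.entropy2}, while $\psi(\eta) = -\mathbf{H}_{1-\alpha}(p(\cdot,\xi))$ falls out of the analogous single-variable integral. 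For $\alpha>1$, Proposition \ref{prop:check.exp.concave2} makes $\varphi$ $\alpha$-exponentially concave, so the natural object is $\mathbf{D}^{(\alpha)}$; the same factorization of $\int p(\cdot,\xi)^{\alpha}p(\cdot,\xi')^{1-\alpha}\,d\mu$ goes through, and the prefactor $1/(\alpha-1)$ produces the $(\alpha-1)/\alpha$ of \eqref{eqn:new.entropy3} while keeping the arguments in the stated order. The formula for $\psi$ comes from the same $\int p(\cdot,\xi)^{1-\alpha}\,d\mu$ calculation, which now has no entropy interpretation because $1-\alpha<0$. The only real obstacle is careful bookkeeping with signs and exponents across the four sub-cases; once the master factorization and the moment identity are in hand, each case collapses to a short calculation.
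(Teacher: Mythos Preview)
Your proposal is correct and follows essentially the same approach as the paper. Both arguments hinge on the same two ingredients: the algebraic identity $\frac{1+\alpha\xi\cdot h}{1+\alpha\xi'\cdot h} = 1 + \alpha(\xi-\xi')\cdot Z_{\xi'}$ and the moment relation $\mathbb{E}_{\xi'}[Z_{\xi'}] = D\varphi(\xi')$ from Propositions \ref{prop:check.exp.concave}--\ref{prop:check.exp.concave2}; the only cosmetic difference is that you start from the R\'enyi integral and simplify toward the $L^{(\pm\alpha)}$-divergence, whereas the paper starts from the $L^{(\pm\alpha)}$-divergence and simplifies toward the R\'enyi integral.
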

\begin{proof}
(i) We want to compute
\[
\mathbf{D}^{(\alpha)}\left[ \xi : \xi' \right] = \frac{1}{\alpha} \log \left( 1 + \alpha D \varphi(\xi') \cdot (\xi - \xi')\right) - (\varphi(\xi) - \varphi(\xi')).
\]
First, using \eqref{eqn:new.sufficient} and \eqref{eqn:new.varphi.gradient}, we have
\begin{equation} \label{eqn:Renyi.compute}
\begin{split}
1 + \alpha D \varphi(\xi') \cdot (\xi - \xi') &= 1 + \alpha \mathbb{E}_{\xi'} [Z_{\xi'} ] \cdot (\xi - \xi') \\
  &= 1 + \alpha \mathbb{E}_{\xi'} \left[ \frac{h(X)}{1 + \alpha \xi' \cdot h(X)} \cdot (\xi - \xi') \right] \\
  &= \mathbb{E}_{\xi'} \frac{1 + \alpha \xi \cdot h(X)}{1 + \alpha \xi' \cdot h(X)}.
\end{split}
\end{equation}
From \eqref{eqn:new.density}, we have 
\[
1 + \alpha \xi' \cdot h(X) = \frac{e^{\alpha \varphi(\xi')}}{p(X, \xi')^{\alpha}}.
\]
It follows from \eqref{eqn:Renyi.compute} that
\begin{equation*}
\begin{split}
\mathbf{D}^{(\alpha)}\left[ \xi : \xi' \right] &= \frac{1}{\alpha} \log \left( 1 + \alpha D \varphi(\xi') \cdot (\xi - \xi')\right) - (\varphi(\xi) - \varphi(\xi')) \\
&= \frac{1}{\alpha} \log \mathbb{E}_{\xi'} \frac{ e^{\alpha \varphi(\xi)}/ p(X, \xi)^{\alpha}}{ e^{\alpha \varphi(\xi')}/ p(X, \xi')^{\alpha}} - (\varphi(\xi) - \varphi(\xi')) \\
&= \frac{1}{\alpha} \log \mathbb{E}_{\xi'} \frac{p(X, \xi')^{\alpha}}{p(X, \xi)^{\alpha}}.
\end{split}
\end{equation*}
Expressing the divergence in terms of the integral, we have
\begin{equation*}
\begin{split}
\mathbf{D}^{(\alpha)}\left[ \xi : \xi' \right] &= \frac{1}{\alpha} \log \int p(x, \xi')^{1 + \alpha} p(x, \xi)^{-\alpha} d\mu \\
&= \frac{1}{\tilde{\alpha} - 1} \log \int p(x, \xi')^{\tilde{\alpha}} p(x, \xi)^{1 - \tilde{\alpha}} d\mu \\
&= \mathbf{D}_{\tilde{\alpha}}\left( p(\cdot, \xi') \mid \mid p(\cdot, \xi) \right),
\end{split}
\end{equation*}
where $\tilde{\alpha} = 1 + \alpha > 1$. %This shows that $\mathbf{D}^{(\alpha)}$ is the R\'{e}nyi divergence of order $\tilde{\alpha}$.

Next we consider the $\alpha$-conjugate
\[
\psi(\eta) = \frac{1}{\alpha} \log \left(1 + \alpha \xi \cdot \eta\right) - \varphi(\xi).
\]
Using the identity \eqref{eqn:nice.identity} and following the argument in \eqref{eqn:Renyi.compute}, we have
\begin{equation*}
\begin{split}
1 + \alpha \xi \cdot \eta &= \left(1 - \alpha D \varphi(\xi) \cdot \xi\right)^{-1} \\
  &= \left( \mathbb{E}_{\xi} \left[ (1 + \alpha \xi \cdot h(X))^{-1} \right] \right)^{-1} \\
  &= \left( \mathbb{E}_{\xi} \left[ p(X, \xi)^{\alpha} e^{-\alpha \varphi(\xi)} \right]\right)^{-1}.
\end{split}
\end{equation*}
From this we get
\begin{equation*}
\begin{split}
\psi(\eta) &= -\frac{1}{\alpha} \log \int p(x, \xi)^{1 + \alpha} e^{-\alpha \varphi(\xi)} d\mu - \varphi(\xi) \\
  &= \frac{1}{1 - \tilde{\alpha}} \int p(x, \xi)^{\tilde{\alpha}} d\mu = \mathbf{H}_{\tilde{\alpha}}(p(\cdot, \xi)).
\end{split}
\end{equation*}
This proves (i), and the proof of (ii) is similar.
\end{proof}

\begin{remark}
It appears from the above results that the $\mathcal{F}^{(-\alpha)}$-family with $\alpha > 1$ is different from the other cases. Nevertheless, the $\mathcal{F}^{(\alpha)}$-family for $\alpha > 0$ and the $\mathcal{F}^{(-\alpha)}$-family for $0 < \alpha < 1$ together give all R\'{e}nyi entropy/divergence of order $\tilde{\alpha} \in (0, 1) \cup (1, \infty)$.
\end{remark}

\begin{example}[finite simplex] \label{eg:simplex}
Let $\mathcal{X} = \{0, 1, \ldots, d\}$ be a finite set and let $\mu$ be the counting measure on $\mathcal{X}$. Fix $\alpha > 0$. Let $h^i(x) = \delta^i(x)$ be the indicator function for point $i$, $i = 1, \ldots, d$. Then \eqref{eqn:new.density} gives, for $\alpha > 0$, the $\mathcal{F}^{(\alpha)}$-family of probability mass functions on $\mathcal{X}$ given by
\begin{equation} \label{eqn:discrete.alpha.exponential.family}
p(i, \xi) = (1 + \alpha \xi^i)^{-1/\alpha} e^{\varphi(\xi)}, \quad i = 1, \ldots, d, \quad p(0, \xi) = e^{\varphi(\xi)},
\end{equation}
where the $\alpha$-exponentially concave function $\varphi$ is
\[
\varphi(\xi) = \log p(0, \xi) = -\log \left(1 + \sum_{i = 1}^d (1 + \alpha \xi^i)^{-1/\alpha}\right),
\]
and the domain is $\Omega = \{\xi \in \mathbb{R}^d: 1 + \alpha \xi^i > 0 \; \forall i\}$. The coordinate $\xi \in \Omega$ is a global coordinate system of the open simplex $\mathcal{S}_d = \{p = (p^0, \ldots, p^d) \in (0, 1)^{1+d}: \sum_i p^i = 1\}$. From \eqref{eqn:discrete.alpha.exponential.family}, we have
\[
\xi^i = \frac{1}{\alpha} \left( \left( \frac{p(0, \xi)}{p(i, \xi)}\right)^{\alpha} - 1 \right), \quad i = 1, \ldots, d.
\]
By Theorem \ref{thm:renyi}, the $L^{(\alpha)}$-divergence of $\varphi$ is the discrete R\'{e}nyi  divergence of order $\tilde{\alpha} = 1 + \alpha > 0$. 

Alternatively, we may express the simplex $\mathcal{S}_d$ as an $\mathcal{F}^{(-\alpha)}$-family where
\[
p(i, \xi) = (1 + \alpha \xi^i)^{\frac{1}{\alpha}} e^{-\varphi(\xi)}, \quad i = 1, \ldots, d, \quad p(0, \xi) = e^{-\varphi(\xi)},
\]
\[
\varphi(\xi) = -\log p(0, \xi) = \log \left(1 + \sum_{j = 1}^d (1 + \alpha \xi^j)^{1/\alpha}\right),
\]
and
\[
\xi^i = \frac{1}{\alpha} \left( \left( \frac{p(i, \xi)}{p(0, \xi)}\right)^{\alpha} - 1 \right), \quad i = 1, \ldots, d.
\]
When $0 < \alpha < 1$, the $L^{(-\alpha)}$-divergence of $\varphi$ is the discrete R\'{e}nyi  divergence of order $\tilde{\alpha} = 1 - \alpha > 0$. 

The corresponding dualistic structures will be considered in Section \ref{sec:alpha.divergence}.
\end{example}

\section{Dualistic structure of $L^{(\pm \alpha)}$-divergence} \label{sec:L.geometry}
In this and the next sections we study the dualistic structure induced by a given $L^{(\pm \alpha)}$-divergence. First we state the general definition of dualistic structure. For preliminaries in differential geometry see \cite{A16, AJLS17}.

\begin{definition} \label{def:dualistic.structure}
Let $M$ be a smooth manifold. A dualistic structure on $\mathcal{M}$ is a quadruplet $(M, g, \nabla, \nabla^*)$ where $g$ (also denoted by $\langle \cdot, \cdot \rangle$) is a Riemannian metric, and $(\nabla, \nabla^*)$ is a pair of torsion-free affine connections that are dual with respect to $g$: for any vector fields $X$, $Y$ and $Z$, the covariant derivatives satisfy the identity
\begin{equation} \label{eqn:connection.duality}
Z \langle X, Y \rangle = \langle \nabla_Z X, Y \rangle + \langle X, \nabla_Z^* Y \rangle.
\end{equation}
We also call $\mathcal{M}$ equipped with a dualistic structure a statistical manifold.
\end{definition}

For a dualistic structure, it is easy to see that the average $\overline{\nabla} = \frac{1}{2} (\nabla + \nabla^*)$ is the Levi-Civita connection of the metric $g$. By the general results of Eguchi (see \cite{E83, E92}), any divergence (see \cite[Definition 1.1]{A16}) on a manifold $\mathcal{M}$ defines a dualistic structure on $\mathcal{M}$. It is this structure induced by $\mathbf{D}^{(\pm \alpha)}$ that we study in this section.

We will derive explicit coordinate representations of this dualistic structure. The geometric consequences will be studied in Section \ref{sec:geometric.consequences}. We stress that while any divergence induces a geometric structure, the self-dual representations motivated by optimal transport suggest the appropriate coordinate systems $\xi$ and $\eta = D^{(\alpha)} \varphi(\xi)$.

By the results of Section \ref{sec:q.exponential}, for an $\mathcal{F}^{(\pm \alpha)}$-family this gives the geometry induced by the R\'{e}nyi divergence. The R\'{e}nyi divergence is closely related to the $\alpha$-divergence.\footnote{The author thanks Shun-ichi Amari for pointing this out.} Our framework is more general as it does not depend on the probabilistic representation. We discuss the connection with the $\alpha$-divergences in Section \ref{sec:alpha.divergence}.

\subsection{Notations}
We focus on the case of $L^{(\alpha)}$-divergence. The arguments for the $L^{(-\alpha)}$-divergence are essentially the same (with some changes of signs due to the different self-dual representation) and will be left for the reader. The main results for the $L^{(-\alpha)}$-divergence will be stated in Section \ref{sec:geometry.minus.alpha}.

Henceforth we fix an $\alpha$-exponentially concave function $\varphi$ on $\Omega$ satisfying Condition \ref{condition:concave}, and let $\mathbf{D} = \mathbf{D}^{(\alpha)}$ be the $L^{(\alpha)}$-divergence of $\varphi$. We will use the self-dual representation derived in Theorem \ref{thm:alpha.duality}:
\begin{equation} \label{eqn:divergence.self.dual}
\mathbf{D}\left[ \xi : \xi' \right] = \frac{1}{\alpha} \log \left(1 + \alpha \xi \cdot \eta' \right) - \varphi(\xi) - \psi(\eta').
\end{equation}
Here $\eta = D^{(\alpha)} \varphi(\xi)$ is the dual coordinate system given by the $\alpha$-gradient. To simplify the notations we let
\begin{equation} \label{eqn:notation.Pi}
\Pi(\xi, \eta') = \Pi^{(\alpha)} (\xi, \eta') = 1 + \alpha \xi \cdot \eta',
\end{equation}
where $\xi \in \Omega$ and $\eta' \in \Omega' = D^{(\alpha)} \varphi(\Omega)$. We regard $\xi$ and $\eta$ as column vectors. The transpose of a vector or matrix $A$ is denoted by $A^{\top}$. The identity matrix is denoted by $I$.

Consider the smooth manifold $\mathcal{M} = \Omega$ where the primal coordinate $\xi$ (identity map) and the dual coordinate $\eta = D^{(\alpha)} \varphi(\xi)$ are global coordinate systems. Note that by symmetry of the cost function $c^{(\alpha)}$ and the associated $\alpha$-duality, it suffices to consider the primal connection $\nabla$ expressed in the primal coordinate system $\xi$. Consider the coordinate vector fields $\frac{\partial}{\partial \xi^1}, \ldots, \frac{\partial}{\partial \xi^d}$. By definition, the coordinate representations of $(g, \nabla, \nabla^*)$ under $\xi$ are given by
\begin{equation} \label{eqn:geometry.coefficients}
\begin{split}
g_{ij}(\xi) &= \left\langle \frac{\partial}{\partial \xi^i}, \frac{\partial}{\partial \xi^j} \right\rangle =  - \left. \frac{\partial}{\partial \xi^i} \frac{\partial}{\partial \xi^{'j}} \mathbf{D}\left[ \xi : \xi' \right] \right|_{\xi = \xi'}, \\
\Gamma_{ijk}(\xi) &= \left\langle \nabla_{\frac{\partial}{\partial \xi^i}} \frac{\partial}{\partial \xi^j}, \frac{\partial}{\partial \xi^k} \right\rangle = -\left. \frac{\partial^2}{\partial \xi^i \partial \xi^j} \frac{\partial}{\partial \xi^{'k}} \mathbf{D}\left[ \xi : \xi' \right] \right|_{\xi = \xi'}, \\
\Gamma_{ijk}^*(\xi) &= \left\langle \nabla_{\frac{\partial}{\partial \xi^i}}^* \frac{\partial}{\partial \xi^j}, \frac{\partial}{\partial \xi^k} \right\rangle = - \left. \frac{\partial^2}{\partial \xi^{'i} \partial \xi^{'j}} \frac{\partial}{\partial \xi^k} \mathbf{D}\left[ \xi : \xi' \right] \right|_{\xi = \xi'}.
\end{split}
\end{equation}
Note that by construction the duality \eqref{eqn:connection.duality} automatically holds (see for example \cite[Theorem 6.2]{A16}). We also define
\begin{equation} \label{eqn:Christoffel.symbols}
\begin{split}
{\Gamma_{ij}}^k(\xi) &= \Gamma_{ijm}(\xi) g^{mk} (\xi), \\
{\Gamma_{ij}^*}\mathstrut^{k}(\xi) &= \Gamma_{ijm}^*(\xi) g^{mk}(\xi),
\end{split}
\end{equation}
where $\left( g^{ij}(\xi) \right)$ is the inverse of $\left( g_{ij}(\xi) \right)$ and the Einstein summation convention (see for example \cite[p.21]{A16}) is used. This implies that
\[
 \nabla_{\frac{\partial}{\partial \xi^i}} \frac{\partial}{\partial \xi^j} = {\Gamma_{ij}}^k(\xi)  \frac{\partial}{\partial \xi^k}, \quad  \nabla_{\frac{\partial}{\partial \xi^i}}^* \frac{\partial}{\partial \xi^j} = {\Gamma_{ij}^*}\mathstrut^{k}(\xi)  \frac{\partial}{\partial \xi^k}.
\]
We also let $\frac{\partial \xi}{\partial \eta} = \left( \frac{\partial \xi^i}{\partial \eta^j} \right)$ and  $\frac{\partial \eta}{\partial \xi} = \left( \frac{\partial \eta^i}{\partial \xi^j} \right)$ be the Jacobian matrices of the transition maps. They are inverses of each other.

\subsection{Dualistic structure of $L^{(\alpha)}$-divergence}
\subsubsection{The Riemannian metric}
\begin{proposition} \label{prop:Riemannian.matrix}
The coordinate representation $G(\xi) = \left( g_{ij}(\xi) \right)$ of the Riemannian metric $g$ is given by
\begin{equation} \label{eqn:Riemannian.matrix}
G(\xi) = \frac{-1}{\Pi^{(\alpha)}(\xi, \eta)} \left(I - \frac{\alpha}{\Pi^{(\alpha)}(\xi, \eta)} \eta \xi^{\top}\right) \frac{\partial \eta}{\partial \xi}.
\end{equation}
Its inverse is given by
\begin{equation} \label{eqn:Riemannian.matrix.inverse}
G^{-1}(\xi) = \left( g^{ij}(\xi) \right) = -\Pi^{(\alpha)} (\xi, \eta) \frac{\partial \xi}{\partial \eta} \left(I + \alpha \eta \xi^{\top}\right).
\end{equation}
\end{proposition}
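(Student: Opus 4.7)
The plan is to apply the Eguchi formula
\[
g_{ij}(\xi) = -\left.\frac{\partial}{\partial\xi^i}\frac{\partial}{\partial\xi'^j}\mathbf{D}^{(\alpha)}[\xi:\xi']\right|_{\xi=\xi'}
\]
to the self-dual representation \eqref{eqn:divergence.self.dual}. The key advantage of the self-dual form is that the terms $-\varphi(\xi)$ and $-\psi(\eta')$ contribute nothing to the mixed second derivative (each depends on only one of $\xi,\xi'$), so the entire Riemannian metric is extracted from differentiating $c^{(\alpha)}(\xi,\eta') = \frac{1}{\alpha}\log\Pi^{(\alpha)}(\xi,\eta')$.

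I would first take $\partial_{\xi'^j}$, treating $\eta' = D^{(\alpha)}\varphi(\xi')$ as a function of $\xi'$ through the Jacobian $J = \partial\eta/\partial\xi$. This gives $\partial_{\xi'^j}\mathbf{D} = \bigl(\xi\cdot\partial_{\xi'^j}\eta'\bigr)/\Pi(\xi,\eta') - D\psi(\eta')\cdot\partial_{\xi'^j}\eta'$. Applying $\partial_{\xi^i}$ then annihilates the second term (no $\xi$-dependence), while the quotient rule on the first term yields
\[
\frac{\partial}{\partial\xi^i}\frac{\partial}{\partial\xi'^j}\mathbf{D}^{(\alpha)}[\xi:\xi'] = \frac{(\partial_{\xi'^j}\eta')^i}{\Pi(\xi,\eta')} - \frac{\alpha\,\eta'^i\,\bigl(\xi\cdot\partial_{\xi'^j}\eta'\bigr)}{\Pi(\xi,\eta')^2}.
\]
Setting $\xi=\xi'$ (so $\eta'=\eta$) and assembling the entries into a matrix gives $\tfrac{1}{\Pi}J - \tfrac{\alpha}{\Pi^2}\eta\xi^\top J$; negating as required by Eguchi's convention produces exactly \eqref{eqn:Riemannian.matrix}.

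For the inverse formula \eqref{eqn:Riemannian.matrix.inverse}, I would simply verify $G(\xi) G^{-1}(\xi) = I$ by direct multiplication. The scalar factors $-1/\Pi$ and $-\Pi$ cancel, and the Jacobians $J$ and $J^{-1}$ cancel, reducing the check to the rank-one matrix identity
\[
\left(I - \tfrac{\alpha}{\Pi}\eta\xi^\top\right)\bigl(I + \alpha\eta\xi^\top\bigr) = I.
\]
Expanding the left hand side and using $\eta\xi^\top\eta\xi^\top = (\xi\cdot\eta)\,\eta\xi^\top$ together with the algebraic identity $\xi\cdot\eta = (\Pi - 1)/\alpha$ (a direct rearrangement of the definition $\Pi^{(\alpha)}(\xi,\eta) = 1+\alpha\xi\cdot\eta$), the three $\eta\xi^\top$ correction terms collapse and cancel, leaving $I$.

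The computation is routine; the only bookkeeping subtlety is keeping track of column/row conventions in the rank-one outer product $\eta\xi^\top$, and the only piece of algebra worth isolating is the identity $\xi\cdot\eta = (\Pi-1)/\alpha$, which encodes how the logarithmic cost $c^{(\alpha)}$ closes up under $\alpha$-duality and makes the rank-one correction collapse. I do not expect any real obstacle beyond this.
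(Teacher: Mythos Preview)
Your proposal is correct and follows essentially the same approach as the paper: both use the self-dual representation \eqref{eqn:divergence.self.dual} so that only the $c^{(\alpha)}$ term survives the mixed derivative, and both reduce the inverse computation to the rank-one identity $(I-\tfrac{\alpha}{\Pi}\eta\xi^\top)^{-1}=I+\alpha\eta\xi^\top$. The only cosmetic differences are that the paper differentiates in the order $\partial_{\xi^i}$ then $\partial_{\xi'^j}$ (giving the slightly cleaner intermediate expression $\eta'^i/\Pi - \partial_i\varphi$) and invokes the Sherman--Morrison formula by name, whereas you differentiate in the opposite order and verify the rank-one inverse by direct multiplication using $\xi\cdot\eta=(\Pi-1)/\alpha$.
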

\begin{proof}%[Proof of Proposition \ref{prop:Riemannian.matrix}]
Using the self-dual representation \eqref{eqn:divergence.self.dual}, we have
\begin{equation} \label{eqn:D.first.two.derivatives}
\begin{split}
\frac{\partial}{\partial \xi^i} \mathbf{D} \left[ \xi : \xi' \right] &= \frac{ \eta'^{i}}{\Pi(\xi, \eta')} - \frac{\partial \varphi}{\partial \xi^i}(\xi), \\
\frac{\partial^2}{\partial \xi^i \partial \xi'^j} \mathbf{D} \left[\xi : \xi'\right] &= \frac{1}{\Pi(\xi, \eta')^2} \left( \Pi(\xi, \eta') \frac{\partial \eta'^i}{\partial \xi'^j} - \alpha \eta'^i \sum_{\ell = 1}^d \xi^{\ell} \frac{\partial \eta'^{\ell}}{\partial \xi'^j}\right).
\end{split}
\end{equation}
Setting $\xi = \xi'$ and expressing in matrix form, we obtain \eqref{eqn:Riemannian.matrix}.

We obtain \eqref{eqn:Riemannian.matrix.inverse} by taking the inverse of \eqref{eqn:Riemannian.matrix}. For the middle term, we may use the Sherman-Morrison formula (see \eqref{eqn:Sherman.Morrison} below) to get
\[
\left(I - \frac{\alpha}{\Pi(\xi, \eta)} \eta \xi^{\top} \right)^{-1} = I + \alpha \eta \xi^{\top}.
\]
\end{proof}

\begin{remark} \label{rem:metric}
In Proposition \ref{prop:Riemannian.matrix} the Riemannian matrix is given in terms of the Jacobian matrix $\frac{\partial \eta}{\partial \xi}$. This is to emphasize the role of duality (compare \eqref{eqn:Riemannian.matrix} with \cite[(1.66)]{A16}) and to enable explicit expressions of the Christoffel symbols ${\Gamma_{ij}}^k = \Gamma_{ijm} g^{mk}$ which appear in the primal geodesic equations.

Let $\xi \in \Omega$ and $v \in \mathbb{R}^d$. By direct differentiation of \eqref{eqn:L.alpha.divergence}, we have
\begin{equation} \label{eqn:Riemannian.matrix.Euclidean}
\left.\frac{d^2}{dt^2} \mathbf{D}\left[ \xi + tv : \xi \right] \right|_{t = 0} = \frac{1}{2} v^{\top} \left( -D^2 \varphi(\xi) - \alpha (D\varphi(\xi))(D\varphi(\xi))^{\top}  \right) v.
\end{equation}
Thus we also have
\begin{equation} \label{eqn:concave.alpha.metric}
g_{ij}(\xi) = -\frac{\partial^2 \varphi}{\partial \xi^i \partial \xi^j}(\xi) - \alpha \frac{\partial \varphi}{\partial \xi^i }(\xi) \frac{\partial \varphi}{\partial \xi^j}(\xi)
\end{equation}
which is the positive definite matrix in \eqref{eqn:metric.exp.concave}. Using the Sherman-Morrison formula
\begin{equation} \label{eqn:Sherman.Morrison}
\left(A + uv^{\top}\right)^{-1} = A^{-1} - \frac{A^{-1} uv^{\top} A^{-1}}{1 + v^{\top} A^{-1} u },
\end{equation}
we can invert $G(\xi)$ to get
\[
G^{-1}(\xi) = - (D^2 \varphi)^{-1} + \alpha \frac{(D^2 \varphi)^{-1} (D\varphi) (D\varphi)^{\top} (D^2 \varphi)^{-1}}{1 + \alpha (D \varphi)^{\top} (D^2 \varphi)^{-1} (D \varphi)}.
\]
While this formula is explicit, it is difficult to use in differential-geometric computations. The same remark applies to the connections.
\end{remark}

\subsubsection{Affine connections}

\begin{proposition}
The Christoffel symbols of the primal connection are given by
\begin{equation} \label{eqn:primal.connection}
\Gamma_{ijk}(\xi) = \frac{\alpha}{\Pi^{(\alpha)}(\xi, \eta)^2} \eta^j \frac{\partial \eta^i}{\partial \xi^k} + \frac{\alpha}{\Pi^{(\alpha)}(\xi, \eta)^2} \eta^i \frac{\partial \eta^j}{\partial \xi^k} - \frac{2 \alpha^2}{\Pi^{(\alpha)}(\xi, \eta)^3} \eta^i \eta^j \sum_{\ell = 1}^d \xi^{\ell} \frac{\partial \eta^{\ell}}{\partial \xi^k},
\end{equation}
\begin{equation} \label{eqn:primal.connection2}
\begin{split}
{\Gamma_{ij}}^k(\xi) &= \frac{-\alpha}{\Pi^{(\alpha)}(\xi, \eta)} \left(\eta^i \delta_j^k + \eta^j \delta_i^k \right) = - \alpha \left(  \frac{\partial \varphi}{\partial \xi^i} \delta_j^k + \frac{\partial \varphi}{\partial \xi^j} \delta_i^k \right).
\end{split}
\end{equation}
where $\delta_{\cdot}^{\cdot}$ is the Kronecker delta.
\end{proposition}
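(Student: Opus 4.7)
The plan is to apply Eguchi's formula \eqref{eqn:geometry.coefficients} to the self-dual representation \eqref{eqn:divergence.self.dual}, obtaining \eqref{eqn:primal.connection} by direct differentiation, and then to exploit the key identity $\eta = \Pi\,D\varphi$ coming from \eqref{eqn:nice.identity} to collapse the result into the simple closed form \eqref{eqn:primal.connection2}.

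First, to obtain \eqref{eqn:primal.connection}, I would continue the differentiation begun in \eqref{eqn:D.first.two.derivatives}. Starting from $\partial_{\xi^i}\mathbf{D}[\xi:\xi'] = \eta'^{i}/\Pi(\xi,\eta') - \partial_i\varphi(\xi)$, a further differentiation in $\xi^j$ (holding $\xi'$, hence $\eta'$, fixed) gives
\[
\partial_{\xi^j}\partial_{\xi^i}\mathbf{D}[\xi:\xi'] = -\frac{\alpha\,\eta'^i\eta'^j}{\Pi(\xi,\eta')^2} - \frac{\partial^2\varphi}{\partial\xi^i\partial\xi^j}(\xi).
\]
The second term is independent of $\xi'$, so the final differentiation in $\xi'^k$ acts only through the Jacobian $\partial\eta'^\ell/\partial\xi'^k$. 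The product and quotient rules applied to $-\alpha\eta'^i\eta'^j/\Pi^2$ produce three terms, and then $\Gamma_{ijk} = -\partial_i\partial_j\partial'_k\mathbf{D}[\xi:\xi']|_{\xi=\xi'}$ yields \eqref{eqn:primal.connection} after setting $\xi=\xi'$ and inverting the sign.

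For the closed form \eqref{eqn:primal.connection2}, inverting the metric via \eqref{eqn:Riemannian.matrix.inverse} is awkward because of the Jacobian $\partial\xi/\partial\eta$. Instead I would translate everything into the Euclidean gradient $v^i := \partial_i\varphi$ and Hessian $h_{ij} := \partial_i\partial_j\varphi$. The identity \eqref{eqn:nice.identity} reads $\eta^i = \Pi\,v^i$, so $\partial_m\eta^i = (\partial_m\Pi)v^i + \Pi\,h_{im}$; differentiating $\alpha\,\xi\cdot\eta = \Pi-1$ in $\xi^m$ further gives $\sum_\ell\xi^\ell\partial_m\eta^\ell = \partial_m\Pi/\alpha - \eta^m$. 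Substituting these into \eqref{eqn:primal.connection}, the contributions proportional to $\partial_m\Pi$ cancel in pairs, leaving the intrinsic expression
\[
\Gamma_{ijm} = \alpha\bigl(v^i h_{jm} + v^j h_{im}\bigr) + 2\alpha^2 v^i v^j v^m.
\]
Using the metric formula \eqref{eqn:concave.alpha.metric}, namely $g_{km} = -h_{km} - \alpha v^k v^m$, a one-line computation gives
\[
-\alpha\bigl(v^i\delta_j^k + v^j\delta_i^k\bigr) g_{km} = \alpha\bigl(v^i h_{jm} + v^j h_{im}\bigr) + 2\alpha^2 v^i v^j v^m,
\]
which equals $\Gamma_{ijm}$. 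Contracting with $g^{mk}$ and invoking non-degeneracy of $g$ yields the first equality in \eqref{eqn:primal.connection2}; the second equality is just the rewriting $\eta^i/\Pi = v^i$ from \eqref{eqn:nice.identity}.

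The main obstacle is this middle simplification. A priori, \eqref{eqn:primal.connection} mixes the Jacobian $\partial\eta/\partial\xi$ with various products of $\eta$ and $\xi$, and it is not at all obvious that the answer should collapse to a tensor linear in $D\varphi$ against Kronecker deltas. What makes this work is the precise cancellation of the $\partial_m\Pi$ contributions, which in turn hinges on the specific logarithmic form of the cost $c^{(\alpha)}$; this is the concrete algebraic fingerprint of the projective flatness to be established in the next section.
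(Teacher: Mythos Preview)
Your proof is correct, and for the first formula your differentiation (two unprimed derivatives first, then the primed one) is essentially the same computation as the paper's, just in a different order. The genuine difference is in how you obtain \eqref{eqn:primal.connection2}. The paper proceeds by brute force: it writes out $g^{mk}$ explicitly using the Sherman--Morrison inverse \eqref{eqn:Riemannian.matrix.inverse} (which carries the Jacobian $\partial\xi/\partial\eta$), multiplies $\Gamma_{ijm}g^{mk}$ term by term, and observes that three of the resulting five terms cancel. Your route avoids the inverse metric entirely: you first rewrite $\Gamma_{ijm}$ intrinsically in terms of $v=D\varphi$ and $h=D^2\varphi$ via $\eta=\Pi v$, obtaining $\Gamma_{ijm}=\alpha(v^ih_{jm}+v^jh_{im})+2\alpha^2v^iv^jv^m$, and then recognize this as $-\alpha(v^i\delta_j^k+v^j\delta_i^k)g_{km}$ using \eqref{eqn:concave.alpha.metric}. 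This is cleaner and more conceptual---the cancellation of the $\partial_m\Pi$ contributions is isolated and transparent, and you never need the Jacobian $\partial\xi/\partial\eta$ or its inverse. The paper's approach, on the other hand, stays within the self-dual $(\xi,\eta)$ formalism throughout, which keeps the duality with $\nabla^*$ manifest and reuses Proposition~\ref{prop:Riemannian.matrix} directly.
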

\begin{proof}
From our previous computation \eqref{eqn:D.first.two.derivatives}, we have
\[
\frac{\partial}{\partial \xi^i} \frac{\partial}{\partial \xi'^k} \mathbf{D}\left[ \xi : \xi' \right] = \frac{1}{\Pi(\xi, \eta')} \frac{\partial \eta'^i}{\partial \xi'^k} - \frac{\alpha}{\Pi(\xi, \eta')^2} \eta'^i \sum_{\ell = 1}^d \xi^{\ell} \frac{\partial \eta'^{\ell}}{\partial \xi'^k}.
\]
Differentiating one more time and writing $\Pi = \Pi(\xi, \eta')$, we get
\begin{equation*}
\begin{split}
 \frac{\partial^2}{\partial \xi^i \partial \xi^j} \frac{\partial}{\partial \xi'^k} \mathbf{D}\left[ \xi : \xi' \right] = \frac{-\alpha}{\Pi^2} \eta'^j \frac{\partial \eta'^i}{\partial \eta'^k} - \frac{\alpha}{\Pi^2} \eta'^i \frac{\partial \eta'^j}{\partial \xi'^k} + \frac{2 \alpha^2}{\Pi^3} \eta'^i \eta'^j \sum_{\ell = 1}^d \xi^{\ell} \frac{\partial \eta'^{\ell}}{\partial \xi'^k}.
\end{split}
\end{equation*}
Setting $\xi = \xi'$, we obtain \eqref{eqn:primal.connection}.

To prove \eqref{eqn:primal.connection2}, we first use \eqref{eqn:Riemannian.matrix.inverse} to write
\[
g^{mk}(\xi) = -\Pi(\xi, \eta) \left( \frac{\partial \xi^m}{\partial \eta^k} + \alpha \sum_{\ell = 1}^d \frac{\partial \xi^m}{\partial \eta^{\ell}} \eta^{\ell} \xi^k \right).
\]
It follows that
\begin{equation*}
\begin{split}
& {\Gamma_{ij}}^k(\xi) = \Gamma_{ijm}(\xi) g^{mk}(\xi) \\
&= -\left( \frac{\alpha}{\Pi} \eta^j \frac{\partial \eta^i}{\partial \xi^m} + \frac{ \alpha}{\Pi} \eta^i \frac{\partial \eta^j}{\partial \xi^m} - \frac{2\alpha^2}{\Pi^2} \eta^i \eta^j \sum_{\ell} \xi^{\ell} \frac{\partial \eta^{\ell}}{\partial \xi^m} \right) \left( \frac{\partial \xi^m}{\partial \eta^k} + \alpha \sum_{\ell = 1}^d \frac{\partial \xi^m}{\partial \eta^{\ell}} \eta^{\ell} \xi^k \right) \\
&= \frac{-\alpha}{\Pi} \eta^j \delta_i^k - \frac{\alpha}{\Pi} \eta^i \delta_j^k + \frac{2\alpha^2}{\Pi^2} \eta^i \eta^j \xi^k - \frac{2\alpha^2}{\Pi} \eta^i \eta^j \xi^k  + \frac{2 \alpha^3}{\Pi^2} \eta^i \eta^j \xi^k \sum_{\ell = 1}^d \xi^{\ell} \eta^{\ell}.
\end{split}
\end{equation*}
The last three terms cancel out and the resulting expression gives the first equality in \eqref{eqn:primal.connection}. The second equality follows from the definition of $\eta$ as the $\alpha$-gradient \eqref{eqn:alpha.gradient} as well as the identity \eqref{eqn:nice.identity}.
\end{proof}

\subsection{Dualistic structure of $L^{(-\alpha)}$-divergence} \label{sec:geometry.minus.alpha}
Here we state the corresponding results for the $L^{(-\alpha)}$-divergence.

\begin{proposition}
For the dualistic structure generated by $\mathbf{D}^{(-\alpha)}$ where $\varphi$ is $\alpha$-exponentially convex, we have
\begin{equation} \label{eqn:convex.alpha.metric}
g_{ij}(\xi) = \frac{\partial^2 \varphi}{\partial \xi^i \partial \xi^j}(\xi) + \alpha \frac{\partial \varphi}{\partial \xi^i }(\xi) \frac{\partial \varphi}{\partial \xi^j}(\xi),
\end{equation}
\begin{equation} \label{eqn:convex.alpha.connection}
{\Gamma_{ij}}^k(\xi) = -\alpha \left(  \frac{\partial \varphi}{\partial \xi^i}(\xi) \delta_j^k + \frac{\partial \varphi}{\partial \xi^j}(\xi) \delta_i^k\right).
\end{equation}
\end{proposition}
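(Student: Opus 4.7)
The plan is to apply Eguchi's formulas \eqref{eqn:geometry.coefficients} to $\mathbf{D}^{(-\alpha)}$, but rather than reproducing the lengthy coordinate computations that led to Proposition \ref{prop:Riemannian.matrix} and \eqref{eqn:primal.connection2}, I would exploit a simple algebraic observation that reduces the task to sign bookkeeping on top of the results already established for $\mathbf{D}^{(\alpha)}$.

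The key observation is this: comparing \eqref{eqn:L.alpha.divergence} and \eqref{eqn:L.minus.alpha.divergence} as algebraic formulas in $(\xi, \xi', \varphi)$, one has
\[
\mathbf{D}^{(-\alpha)}[\xi : \xi'] = -\mathbf{D}^{(\alpha)}[\xi : \xi'],
\]
the only distinction between the two cases being the admissible class of potential functions (exponentially convex versus exponentially concave), which is invisible to pointwise differentiation. Because the Eguchi formulas \eqref{eqn:geometry.coefficients} are linear in $\mathbf{D}$, every quantity they produce flips sign:
\[
g_{ij}^{(-\alpha)} = -g_{ij}^{(\alpha)}, \qquad \Gamma_{ijk}^{(-\alpha)} = -\Gamma_{ijk}^{(\alpha)}, \qquad (g^{mk})^{(-\alpha)} = -(g^{mk})^{(\alpha)},
\]
from which the mixed-index Christoffel symbols are invariant: ${\Gamma_{ij}}^{k,(-\alpha)} = \Gamma_{ijm}^{(-\alpha)} (g^{mk})^{(-\alpha)} = {\Gamma_{ij}}^{k,(\alpha)}$.

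Applying this to \eqref{eqn:concave.alpha.metric} immediately yields \eqref{eqn:convex.alpha.metric}, with positivity of the resulting matrix now guaranteed by Lemma \ref{lem:exp.concave.convex}(ii) rather than (i). For the connection, the second equality in \eqref{eqn:primal.connection2} is written purely in terms of partial derivatives of $\varphi$, so it transfers verbatim to give \eqref{eqn:convex.alpha.connection}. A minor subtlety is that the derivation of \eqref{eqn:primal.connection2} made use of the identities $\eta = D\varphi/(1 - \alpha D\varphi \cdot \xi)$ and $\Pi = 1/(1 - \alpha D\varphi \cdot \xi)$; but Theorem \ref{thm:duality.minus.alpha} retains exactly the same $\alpha$-gradient relation in the exp-convex setting, so the passage from the $\eta$-form to the $\varphi$-form is legitimate.

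No step presents a genuine obstacle: once the sign symmetry is observed, everything propagates cleanly. If a fully self-contained argument were preferred, one could instead Taylor-expand \eqref{eqn:L.minus.alpha.divergence} directly about $\xi' = \xi$ as in Remark \ref{rem:metric} to obtain \eqref{eqn:convex.alpha.metric} (the Hessian of $t \mapsto \mathbf{D}^{(-\alpha)}[\xi + tv : \xi]$ at $t=0$ equals $v^\top (D^2\varphi + \alpha (D\varphi)(D\varphi)^\top) v$), and differentiate three times to extract $\Gamma_{ijk}$ before contracting with the inverse metric; the only mild nuisance is tracking the signs arising from the leading $-\frac{1}{\alpha}\log(1 + \alpha D\varphi(\xi')\cdot(\xi-\xi'))$ term.
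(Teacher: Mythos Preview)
Your proposal is correct and is essentially the same approach as the paper. The paper does not give a separate proof but states that ``the arguments for the $L^{(-\alpha)}$-divergence are essentially the same (with some changes of signs due to the different self-dual representation),'' and in the remark immediately following the proposition it makes exactly your observation that ${\Gamma_{ij}}^k = \Gamma_{ijm} g^{mk}$ is invariant because the two sign flips cancel.
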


Note that \eqref{eqn:primal.connection2} and \eqref{eqn:convex.alpha.connection} have the same form (even though here $\varphi$ is $\alpha$-exponentially convex). This is because ${\Gamma_{ij}}^k = \Gamma_{ijm} g^{mk}$ and the changes of signs cancel out in the product.

\section{Geometric consequences} \label{sec:geometric.consequences}
With all the coefficients available, we derive in this section the geometric properties of the dualistic structure $(g, \nabla, \nabla^*)$. We state the results for any $\mathbf{D}^{(\pm \alpha)}$ with $\alpha > 0$, but the proofs will again be given for the case of $\mathbf{D}^{(\alpha)}$. In Section \ref{sec:alpha.divergence} we specialize to the unit simplex and explain the connections with the $\alpha$-divergence.

\subsection{Dual projective flatness}
The following definition is taken from \cite{M99}.

\begin{definition} [Projective flatness] \label{def:projective.flatness}
Let $\nabla$ and $\widetilde{\nabla}$ be torsion-free affine connections on a smooth manifold. They are projectively equivalent if there exists a differential $1$-form $\tau$ such that
\begin{equation} \label{eqn:projective.equivalent}
\nabla_X Y = \widetilde{\nabla}_X Y + \tau(X) Y + \tau(Y) X
\end{equation}
for any smooth vector fields $X$ and $Y$. We say that $\nabla$ is projectively flat if $\nabla$ is projectively equivalent to a flat connection (a connection whose Riemann-Christoffel curvature tensor vanishes).
\end{definition}

If we write $\tau = a_i(\xi) d \xi^i$, say, using the primal coordinate system, then \eqref{eqn:projective.equivalent} is equivalent to
\begin{equation} \label{eqn:projective.equivalent.condition}
{\Gamma_{ij}}^k(\xi) = \widetilde{\Gamma}_{ij}\mathstrut^{k}(\xi) + a_i(\xi) \delta_j^k + a_j(\xi) \delta_i^k.
\end{equation}

\begin{theorem} \label{prop:projective.flat}
For any $\mathbf{D}^{(\pm \alpha)}$, the primal connection $\nabla$ and the dual connection $\nabla^*$ are projectively flat. Thus we say that the induced dualistic structures are dually projectively flat.
\end{theorem}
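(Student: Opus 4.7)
The plan is to exploit the already-computed explicit formula for the Christoffel symbols in the primal coordinates and then invoke the built-in symmetry of the self-dual representation to handle the dual connection.

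First I would reread the formula
\[
{\Gamma_{ij}}^k(\xi) = -\alpha \left( \frac{\partial \varphi}{\partial \xi^i} \delta_j^k + \frac{\partial \varphi}{\partial \xi^j} \delta_i^k \right)
\]
from \eqref{eqn:primal.connection2}. Its shape is precisely the right-hand side of the projective-equivalence identity \eqref{eqn:projective.equivalent.condition} with $\widetilde{\Gamma}_{ij}\mathstrut^{k} \equiv 0$ and $a_i(\xi) = -\alpha \, \partial \varphi / \partial \xi^i$. Since the $\xi$-coordinate system realizes a flat connection $\widetilde{\nabla}$ (Christoffel symbols identically zero in that chart), the 1-form $\tau = -\alpha \, d\varphi$ certifies that $\nabla$ is projectively equivalent to the flat $\widetilde{\nabla}$, hence $\nabla$ is projectively flat. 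The same verbatim argument applies to the $L^{(-\alpha)}$-case by \eqref{eqn:convex.alpha.connection}.

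For $\nabla^*$, the key observation is that the self-dual representation \eqref{eqn:divergence.self.dual} together with $(D^{(\alpha)}\varphi)^{-1} = D^{(\alpha)}\psi$ (Theorem \ref{prop:alpha.duality}) is symmetric in the pair $(\xi, \varphi)$ versus $(\eta, \psi)$. Concretely, the biduality shown in Corollary \ref{cor:c.divergence.duality} (specialized in Section \ref{sec:alpha.duality}) gives $\mathbf{D}^{(\alpha)}[\xi:\xi'] = \mathbf{D}^{(\alpha)}_\psi[\eta':\eta]$ with the roles of the arguments swapped. By Eguchi's definitions \eqref{eqn:geometry.coefficients}, swapping the two slots of a divergence interchanges the primal and dual connections. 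Therefore the coordinate expression of $\nabla^*$ in the chart $\eta$ is obtained from that of $\nabla$ in $\xi$ by replacing $\varphi$ with $\psi$, yielding
\[
{\Gamma^*_{ij}}\mathstrut^{k}(\eta) = -\alpha \left( \frac{\partial \psi}{\partial \eta^i} \delta_j^k + \frac{\partial \psi}{\partial \eta^j} \delta_i^k \right).
\]
This is again of the form \eqref{eqn:projective.equivalent.condition} relative to the flat connection associated with the $\eta$-chart, with 1-form $-\alpha\, d\psi$, so $\nabla^*$ is projectively flat.

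The main (very minor) obstacle is being careful that the $\eta$-coordinate system is legitimately affine for a flat connection on $\mathcal{M} = \Omega$: this is fine because $\eta = D^{(\alpha)}\varphi(\xi)$ is a global diffeomorphism onto the open set $\Omega' \subset \mathbb{R}^d$ by the proposition preceding Definition \ref{def:alpha.conjugate}, so declaring the coordinate vector fields $\partial/\partial \eta^i$ to be parallel gives a bona fide flat torsion-free connection on $\mathcal{M}$, and it is with respect to this reference connection that the identity above exhibits $\nabla^*$ as projectively flat. The $L^{(-\alpha)}$-case is handled identically using Theorem \ref{thm:duality.minus.alpha}. Together these give the dual projective flatness of $(\mathcal{M}, g, \nabla, \nabla^*)$.
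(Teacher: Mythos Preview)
Your proof is correct and follows essentially the same approach as the paper: identify the $1$-form $\tau = -\alpha\, d\varphi$ and compare \eqref{eqn:primal.connection2} with the projective-equivalence condition \eqref{eqn:projective.equivalent.condition} relative to the flat Euclidean connection in the $\xi$-chart. The paper simply says ``We only consider the primal connection'' and defers the dual case to the symmetry of $c^{(\alpha)}$ and the $\alpha$-duality stated earlier, whereas you spell that symmetry argument out explicitly via the self-dual representation and biduality; this is the same idea, just more detailed.
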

\begin{proof}
We only consider the primal connection. Consider the $1$-form defined by $\tau(X) = -\alpha X \varphi$, where $X \varphi$ means the derivative of $\varphi$ in the direction of $X$. In primal coordinates, we have
\begin{equation} \label{eqn:1-form}
\tau = -\sum_{i = 1}^d \alpha \frac{\partial \varphi}{\partial \xi^i} d\xi^i.
\end{equation}
Let $\widetilde{\nabla}$ be the flat Euclidean connection with respect to the coordinate system $\xi$ such that its Christoffel symbols satisfy $\widetilde{\Gamma}_{ij}\mathstrut^{k}(\xi) \equiv 0$. From \eqref{eqn:projective.equivalent.condition}, we see that $\nabla$ is projectively equivalent to $\widetilde{\nabla}$ and thus $\nabla$ is projectively flat.
\end{proof}

Projective flatness can be related to the behaviors of the geodesics. Recall that a (smooth) curve $\gamma: [0, 1] \rightarrow \mathcal{M}$ is said to be a primal geodesic if $\nabla_{\dot{\gamma}(t)} \dot{\gamma}(t) \equiv 0$. Equivalently, its primal coordinate representation $\xi(t) = (\xi^1(t), \ldots, \xi^d(t))$ satisfies the primal geodesic equations
\begin{equation} \label{eqn:primal.geodesic.equation}
\ddot{\xi}^k(t) + \dot{\xi}^i(t) \dot{\xi}^j(t) {\Gamma_{ij}}^k(\xi(t)) = 0, \quad k = 1, \ldots, d,
\end{equation}
where the dots denote derivatives with respect to time. Similarly, the curve $\gamma$ is a dual geodesic if $\nabla_{\dot{\gamma}(t)}^* \dot{\gamma}(t) \equiv 0$.

The following result is well-known in differential geometry.

\begin{lemma}
If $\nabla$ and $\widetilde{\nabla}$ are projectively equivalent, then a $\nabla$-geodesic is a $\widetilde{\nabla}$-geodesic up to a time reparameterization, and vice versa.
\end{lemma}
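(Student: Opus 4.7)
My plan is to verify the lemma via a local coordinate computation using the characterization \eqref{eqn:projective.equivalent.condition} of projective equivalence by Christoffel symbols. The key observation is that the two ``extra'' terms $a_i(\xi) \delta_j^k + a_j(\xi) \delta_i^k$ produce, upon contraction with $\dot\xi^i \dot\xi^j$, something proportional to $\dot\xi^k$, and such a force along the velocity is exactly what a reparameterization can absorb.

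First I would let $\gamma$ be a $\nabla$-geodesic with coordinate representation $\xi(t)$ so that $\ddot\xi^k + {\Gamma_{ij}}^k \dot\xi^i \dot\xi^j \equiv 0$, and substitute \eqref{eqn:projective.equivalent.condition}. Writing $\tau = a_i \, d\xi^i$, this immediately gives
\[
\ddot\xi^k + \widetilde{\Gamma}_{ij}{}^k \dot\xi^i \dot\xi^j \;=\; -2\, \tau(\dot\gamma)\, \dot\xi^k.
\]
Thus $\gamma$ satisfies a pregeodesic equation with respect to $\widetilde{\nabla}$, meaning $\widetilde{\nabla}_{\dot\gamma}\dot\gamma$ is proportional to $\dot\gamma$.

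Next I would introduce a reparameterization $\sigma(s) = \gamma(t(s))$ and compute by the chain rule that $(\sigma^k)'(s) = t'(s)\, \dot\xi^k(t(s))$ and $(\sigma^k)''(s) = t''(s)\,\dot\xi^k + (t'(s))^2 \ddot\xi^k$. Plugging into the $\widetilde{\nabla}$-geodesic equation for $\sigma$ and using the pregeodesic identity above yields
\[
(\sigma^k)''(s) + \widetilde{\Gamma}_{ij}{}^k (\sigma^i)'(\sigma^j)' \;=\; \bigl[\, t''(s) - 2 (t'(s))^2\, \tau(\dot\gamma(t(s)))\,\bigr]\, \dot\xi^k(t(s)).
\]
Setting the right-hand side to zero gives an ODE for $t(s)$, equivalently (via the inverse $s(t)$) the first-order linear ODE $\tfrac{d}{dt} \log s'(t) = -2\, \tau(\dot\gamma(t))$, which admits the explicit strictly positive solution
\[
s'(t) \;=\; \exp\!\left(-2 \int_0^t \tau(\dot\gamma(r))\, dr\right).
\]
Hence a valid (monotone, smooth) reparameterization exists, turning $\gamma$ into a $\widetilde{\nabla}$-geodesic.

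The converse is immediate by symmetry: projective equivalence is a symmetric relation (swap the roles of $\nabla$ and $\widetilde{\nabla}$, replacing $\tau$ by $-\tau$), so any $\widetilde{\nabla}$-geodesic becomes a $\nabla$-geodesic after the analogous reparameterization. I do not anticipate any serious obstacle; the only point requiring care is the chain-rule bookkeeping in step two and the recognition that the ``force along velocity'' structure is precisely a pregeodesic, which classically can be rescaled away.
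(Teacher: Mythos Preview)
Your argument is correct and is the standard proof of this classical fact: the projective-equivalence terms contract against $\dot\xi^i\dot\xi^j$ to give $2\tau(\dot\gamma)\dot\xi^k$, yielding a pregeodesic equation for $\widetilde\nabla$, and the resulting first-order ODE $\tfrac{d}{dt}\log s'(t)=-2\tau(\dot\gamma(t))$ always admits a strictly positive solution, producing the required monotone reparameterization. The paper itself gives no proof at all, simply citing the lemma as well-known in differential geometry; your write-up supplies exactly the routine verification that the paper omits.
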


\begin{corollary}
For the dualistic structure induced by $\mathbf{D}^{(\pm \alpha)}$, if $\gamma$ is a primal (dual) geodesic, then its trace in the primal (dual) coordinate system is a straight line.
\end{corollary}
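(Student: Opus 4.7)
My plan is to deduce the corollary as an immediate consequence of projective flatness (Theorem \ref{prop:projective.flat}) together with the preceding lemma on reparameterization of geodesics under projectively equivalent connections. The two statements (primal and dual) are handled symmetrically once one invokes the $\alpha$-duality of Theorem \ref{prop:alpha.duality}, so the main content is to set up the right auxiliary flat connection in each coordinate system and then argue that the traces coincide.

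For the primal claim, let $\widetilde{\nabla}$ denote the flat affine connection whose Christoffel symbols vanish identically in the $\xi$-coordinates. The proof of Theorem \ref{prop:projective.flat} exhibits the $1$-form $\tau = -\alpha\, d\varphi$ for which $\nabla = \widetilde{\nabla} + \tau\otimes \mathrm{Id} + \mathrm{Id}\otimes \tau$, i.e.\ $\nabla$ and $\widetilde{\nabla}$ are projectively equivalent. By the lemma just stated, any primal geodesic $\gamma$ is, up to a time reparameterization, a $\widetilde{\nabla}$-geodesic. Since the $\widetilde{\nabla}$-geodesic equations in the $\xi$-chart reduce to $\ddot{\xi}^k(t)=0$, their solutions are straight lines $t\mapsto \xi_0 + tv$. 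A smooth reparameterization does not alter the image of the curve, so the trace of $\gamma$ in the $\xi$-coordinate system is a straight-line segment.

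For the dual claim, the key observation is that the roles of $(\varphi,\xi,\nabla)$ and $(\psi,\eta,\nabla^{*})$ are fully interchangeable. By Theorem \ref{prop:alpha.duality} the $\alpha$-conjugate $\psi$ is (locally) $\alpha$-exponentially concave on $\Omega'$ with $\alpha$-gradient $D^{(\alpha)}\psi = (D^{(\alpha)}\varphi)^{-1}$, and by the biduality Corollary \ref{cor:c.divergence.duality} (used in the proof of Theorem \ref{prop:alpha.duality}) the $L^{(\alpha)}$-divergence of $\varphi$ equals that of $\psi$ with the arguments interchanged. Applying the construction of Eguchi to $\psi$ in the coordinates $\eta$ therefore produces the same dualistic structure $(g,\nabla,\nabla^{*})$, but now with $\nabla^{*}$ playing the role of the \emph{primal} connection. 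Running the argument of Theorem \ref{prop:projective.flat} in this representation, the $1$-form $-\alpha\, d\psi$ witnesses that $\nabla^{*}$ is projectively equivalent in the $\eta$-chart to the flat connection whose Christoffel symbols vanish in $\eta$. Invoking the lemma once more yields that every dual geodesic, up to reparameterization, solves $\ddot{\eta}^k(t)=0$, so its trace in the dual coordinate system is a straight line.

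The only subtlety I foresee is bookkeeping the swap of primal and dual in the second paragraph: one must check that the geometry Eguchi extracts from $\mathbf{D}^{(\alpha)}[\cdot:\cdot]$ viewed as a divergence in $\eta$ (via $\psi$) really coincides with the original $(g,\nabla,\nabla^{*})$ with the roles of $\nabla$ and $\nabla^{*}$ exchanged; this is precisely the content of the self-dual representation \eqref{eqn:alpha.self.dual} together with Corollary \ref{cor:c.divergence.duality}, so no new computation is required. The remaining $L^{(-\alpha)}$ case is identical after the sign conventions of Section \ref{sec:geometry.minus.alpha} are applied.
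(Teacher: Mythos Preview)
Your proposal is correct and follows essentially the same approach as the paper: the corollary is an immediate consequence of Theorem \ref{prop:projective.flat} (dual projective flatness, with the dual case handled by the $\alpha$-duality symmetry between $(\varphi,\xi)$ and $(\psi,\eta)$) together with the preceding lemma on reparameterization of geodesics under projectively equivalent connections. The paper in fact gives no separate proof of the corollary, treating it as self-evident from these two ingredients; your write-up simply makes the dual-case symmetry argument more explicit than the paper does.
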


Thus the primal geodesics are straight lines under the primal coordinate system but run at non-constant speeds. Let us consider the time reparameterization. Using \eqref{eqn:primal.connection2}, we note that the primal geodesic equation \eqref{eqn:primal.geodesic.equation} (under the primal coordinate system) is equivalent to the single vector equation
\begin{equation} \label{eqn:primal.geodesic.equation.2}
\ddot{\xi}(t) = 2\alpha \dot{\xi}(t) \frac{d}{dt} \varphi(\xi(t)).
\end{equation}
From this we immediately see  that the trace of the primal geodesic is a straight line under the coordinate $\xi$. 

\begin{proposition}
Let $\gamma : [0, 1] \rightarrow \mathcal{M}$ be a primal geodesic. Then, in primal coordinates, we have
\begin{equation} \label{eqn:primal.geodesic.straight.line}
\xi(t) = (1 - h(t)) \xi(0) + h(t) \xi(1),
\end{equation}
where $h: [0, 1] \rightarrow [0, 1]$ is given by
\begin{equation} \label{eqn:primal.time.reparameterization}
h(t) = \frac{\int_0^t e^{2 \alpha \varphi(\xi(s))} ds}{\int_0^1 e^{2 \alpha \varphi(\xi(s)) } ds}.
\end{equation}
\end{proposition}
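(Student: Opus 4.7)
The plan is to start from the vector ODE \eqref{eqn:primal.geodesic.equation.2}, namely $\ddot\xi(t) = 2\alpha \dot\xi(t)\,\frac{d}{dt}\varphi(\xi(t))$, and read off two facts: the direction of $\dot\xi(t)$ is constant in $t$, and the magnitude satisfies a scalar linear ODE whose integrating factor is $e^{-2\alpha\varphi(\xi(t))}$. Combining these yields both the straight-line trajectory and the explicit reparameterization $h$.

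Concretely, I would first rewrite the ODE as $\frac{d}{dt}\bigl(e^{-2\alpha\varphi(\xi(t))}\dot\xi(t)\bigr) = 0$. Indeed, expanding the derivative gives
\[
e^{-2\alpha\varphi(\xi(t))}\Bigl(\ddot\xi(t) - 2\alpha\,\tfrac{d}{dt}\varphi(\xi(t))\,\dot\xi(t)\Bigr),
\]
which vanishes precisely when \eqref{eqn:primal.geodesic.equation.2} holds. Therefore there exists a constant vector $u \in \mathbb{R}^d$ such that
\[
\dot\xi(t) = e^{2\alpha\varphi(\xi(t))}\, u \qquad \text{for all } t \in [0,1].
\]
In particular $\dot\xi(t)$ is a scalar multiple of the fixed vector $u$, so the trace $\{\xi(t)\}$ is contained in the straight line through $\xi(0)$ with direction $u$; this is the projective-flatness statement made geometrically explicit.

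Next I would integrate this relation. Writing $S(t) = \int_0^t e^{2\alpha\varphi(\xi(s))}\,ds$, integration gives $\xi(t) - \xi(0) = S(t)\,u$. Evaluating at $t=1$ yields $u = S(1)^{-1}(\xi(1)-\xi(0))$, provided $S(1) \neq 0$, which is clear since $e^{2\alpha\varphi} > 0$. Substituting back,
\[
\xi(t) = \xi(0) + \frac{S(t)}{S(1)}\bigl(\xi(1) - \xi(0)\bigr) = (1-h(t))\xi(0) + h(t)\xi(1),
\]
with $h(t) = S(t)/S(1)$, which is exactly \eqref{eqn:primal.time.reparameterization}. The map $h:[0,1]\to[0,1]$ is smooth and strictly increasing with $h(0)=0$, $h(1)=1$.

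The only mild subtlety, and where I would be careful, is the self-referential nature of the formula: $h(t)$ is expressed in terms of $\varphi(\xi(s))$ along the geodesic, so it is really a reparameterization formula rather than a closed-form solution. But this is not an obstacle to the proof of the stated identity, since once $\xi$ is known as a geodesic, $h$ is simply defined by that integral. No additional existence argument is required beyond the standard existence of geodesics for the torsion-free connection $\nabla$, which is assumed throughout.
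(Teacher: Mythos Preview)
Your proof is correct and is essentially the same integrating-factor argument as the paper's: both observe that $e^{-2\alpha\varphi(\xi(t))}\dot\xi(t)$ (or equivalently $e^{-2\alpha\varphi(\xi(t))}\dot h(t)$) is constant and then integrate. The only difference is organizational: the paper first invokes projective flatness to write $\xi(t)=(1-h(t))\xi(0)+h(t)\xi(1)$ and then solves the resulting scalar ODE for $h$, whereas you work directly with the vector equation and obtain both the straight-line form and the reparameterization in a single step.
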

\begin{proof}
We already know that a primal geodesic can be written in the form \eqref{eqn:primal.geodesic.straight.line} for some time change $h$. Plugging this into the geodesic equation \eqref{eqn:primal.geodesic.equation.2}, we have
\[
\ddot{h}(t) = 2 \alpha \dot{h}(t) \frac{d}{dt} \varphi(\xi(t)) \Rightarrow  \frac{d}{dt} \log \dot{h}(t) = 2\alpha \frac{d}{dt} \varphi(\xi(t)).
\]
Integrating and using the fact that $h(0) = 0$ and $h(1) = 1$, we obtain the desired formula \eqref{eqn:primal.time.reparameterization}. %{\color {blue} Can we write down the equation of the geodesic explicitly?}
\end{proof}

A natural question is the relationship between the primal and dual coordinate vector fields. This is studied in the following proposition.

\begin{proposition} \label{prop:inner.product} {\ } 
\begin{enumerate}
\item[(i)] For $\mathbf{D}^{(\alpha)}$, the inner product between $\frac{\partial}{\partial \xi^i}$ and $\frac{\partial}{\partial \eta^j}$ is
\begin{equation} \label{eqn:inner.product}
\left\langle \frac{\partial}{\partial \xi^i}, \frac{\partial}{\partial \eta^j} \right\rangle = \frac{-1}{\Pi(\xi, \eta)} \delta_{ij} + \frac{\alpha}{\Pi(\xi, \eta)^2} \xi^j \eta^i.
\end{equation}
\item[(ii)] For $\mathbf{D}^{(-\alpha)}$, the inner product between $\frac{\partial}{\partial \xi^i}$ and $\frac{\partial}{\partial \eta^j}$ is
\begin{equation} \label{eqn:inner.product2}
\left\langle \frac{\partial}{\partial \xi^i}, \frac{\partial}{\partial \eta^j} \right\rangle = \frac{1}{\Pi(\xi, \eta)} \delta_{ij} - \frac{\alpha}{\Pi(\xi, \eta)^2} \xi^j \eta^i.
\end{equation}
\end{enumerate}
\end{proposition}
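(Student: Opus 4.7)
\bigskip

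\noindent\textbf{Proof plan for Proposition \ref{prop:inner.product}.}

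The plan is to reduce the computation of $\langle \partial/\partial\xi^i, \partial/\partial\eta^j\rangle$ to a matrix product that is already in hand from Proposition \ref{prop:Riemannian.matrix}. First I would use the chain rule to write
\[
\frac{\partial}{\partial \eta^j} = \sum_{k=1}^d \frac{\partial \xi^k}{\partial \eta^j} \, \frac{\partial}{\partial \xi^k},
\]
so that bilinearity of $g$ gives
\[
\left\langle \frac{\partial}{\partial \xi^i}, \frac{\partial}{\partial \eta^j} \right\rangle = \sum_{k=1}^d g_{ik}(\xi) \, \frac{\partial \xi^k}{\partial \eta^j} = \left(G(\xi) \, \frac{\partial \xi}{\partial \eta}\right)_{ij}.
\]

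For part (i), the key observation is that the Jacobian factor $\partial\eta/\partial\xi$ appearing in the closed form \eqref{eqn:Riemannian.matrix} for $G(\xi)$ is exactly inverted by $\partial\xi/\partial\eta$. Substituting in,
\[
G(\xi) \, \frac{\partial \xi}{\partial \eta} = \frac{-1}{\Pi^{(\alpha)}(\xi,\eta)} \left(I - \frac{\alpha}{\Pi^{(\alpha)}(\xi,\eta)} \, \eta \xi^{\top}\right) \frac{\partial \eta}{\partial \xi} \frac{\partial \xi}{\partial \eta} = \frac{-1}{\Pi^{(\alpha)}(\xi,\eta)} \left(I - \frac{\alpha}{\Pi^{(\alpha)}(\xi,\eta)} \, \eta \xi^{\top}\right),
\]
and reading off the $(i,j)$ entry of the rank-one correction $\eta \xi^{\top}$ (whose entries are $\eta^i \xi^j$) yields \eqref{eqn:inner.product} immediately.

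For part (ii) I would redo the derivation of Proposition \ref{prop:Riemannian.matrix} using the self-dual representation \eqref{eqn:alpha.self.dual} for $\mathbf{D}^{(-\alpha)}$, namely $\mathbf{D}^{(-\alpha)}[\xi:\xi'] = \varphi(\xi) + \psi(\eta') - c^{(\alpha)}(\xi,\eta')$. The mixed partial $\partial_{\xi^i} \partial_{\xi'^j} \mathbf{D}^{(-\alpha)}$ differs from the $\mathbf{D}^{(\alpha)}$ case only by an overall sign, since $\varphi(\xi)$ drops out under $\partial_{\xi'^j}$ and $\psi(\eta')$ drops out under $\partial_{\xi^i}$, leaving $-\partial_{\xi^i}\partial_{\xi'^j} c^{(\alpha)}(\xi,\eta')$. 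Consequently the matrix representation is
\[
G(\xi) = \frac{1}{\Pi^{(\alpha)}(\xi,\eta)} \left(I - \frac{\alpha}{\Pi^{(\alpha)}(\xi,\eta)} \, \eta \xi^{\top}\right) \frac{\partial \eta}{\partial \xi},
\]
with the opposite sign from \eqref{eqn:Riemannian.matrix}, and repeating the cancellation of Jacobians gives \eqref{eqn:inner.product2}.

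I do not expect any real obstacle here: the only subtle point is book-keeping of signs between the two self-dual representations (one has $c^{(\alpha)} - \varphi - \psi$ and the other has $\varphi + \psi - c^{(\alpha)}$), but this propagates transparently through Eguchi's formula \eqref{eqn:geometry.coefficients}. The proof is essentially a one-line matrix cancellation once the chain rule is applied. \qed
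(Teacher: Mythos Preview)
Your proposal is correct and follows essentially the same approach as the paper: both apply the chain rule to express $\partial/\partial\eta^j$ in the $\xi$-frame and then use the matrix form of $G(\xi)$ from Proposition~\ref{prop:Riemannian.matrix} so that the Jacobian $\partial\eta/\partial\xi$ cancels against $\partial\xi/\partial\eta$. Your matrix formulation $G(\xi)\,\partial\xi/\partial\eta$ is a slightly slicker packaging of the same componentwise sum the paper writes out, and your treatment of part~(ii) via the sign flip in the self-dual representation is exactly the ``similar'' argument the paper alludes to.
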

\begin{proof}
Consider (i). Using Proposition \ref{prop:Riemannian.matrix}, we compute
\begin{equation*}
\begin{split}
\left\langle \frac{\partial}{\partial \xi^i}, \frac{\partial}{\partial \eta^j} \right\rangle &= \left\langle \frac{\partial}{\partial \xi^i}, \sum_m \frac{\partial \xi^m}{\partial \eta^j} \frac{\partial}{\partial \xi^m} \right\rangle = \sum_m \frac{\partial \xi^m}{\partial \eta^j} \left\langle \frac{\partial}{\partial \xi^i}, \frac{\partial}{\partial \xi^m} \right\rangle \\
&= \sum_m \frac{\partial \xi^m}{\partial \eta^j} \left( \frac{-1}{\Pi(\xi, \eta)} \frac{\partial \eta^i}{\partial \xi^m} + \frac{\alpha}{\Pi(\xi, \eta)^2} \eta^i \sum_{\ell} \xi^{\ell} \frac{\partial \eta^{\ell}}{\partial \xi^m} \right).
\end{split}
\end{equation*}
We obtain \eqref{eqn:inner.product} under some simplification.
\end{proof}

The formulas \eqref{eqn:inner.product} and \eqref{eqn:inner.product2} are quite interesting. In the dually flat case (i.e., $\mathbf{D}^{(0\pm)}$, also see \cite[Theorem 6.6]{A16}), the two coordinate fields are orthonormal in the sense that $\left\langle \frac{\partial}{\partial \xi^i}, \frac{\partial}{\partial \eta^j} \right\rangle = \pm \delta_{ij}$. When $\alpha > 0$, the first term of the inner product is conformal to $\pm \delta_{ij}$, but there is an extra term related to duality. This formula is the key ingredient in the proof of the generalized Pythagorean theorem.

\subsection{Curvatures} %%
Next we consider the primal Riemann-Christoffel curvature tensor $R$ defined by
\begin{equation} \label{eqn:curvature.tensor}
R(X, Y)Z = \nabla_X \nabla_Y Z - \nabla_Y \nabla_X Z - \nabla_{[X, Y]} Z,
\end{equation}
where $X, Y, Z$ are vector fields and $[X, Y] = XY - YX$ is the Lie bracket. Under the primal coordinate system, we use the notation
\[
R\left( \frac{\partial}{\partial \xi^i}, \frac{\partial}{\partial \xi^j}\right) \frac{\partial}{\partial \xi^k} = {R_{ijk}}^{\ell} \frac{\partial}{\partial \xi^{\ell}}.
\]
It can be shown (see for example \cite[Section 5.8]{A16}) that the coefficients are given by
\begin{equation} \label{eqn:RC.coefficients}
{R_{ijk}}^{\ell} = \partial_i {\Gamma_{jk}}^{\ell} - \partial_j {\Gamma_{ik}}^{\ell} + {\Gamma_{jk}}^m {\Gamma_{im}}^{\ell} - {\Gamma_{ik}}^m {\Gamma_{jm}}^{\ell}.
\end{equation}
The dual curvature tensor $R^*$ is defined analogously.

\begin{proposition} { \ }
\begin{enumerate}
\item[(i)] For $\mathbf{D}^{(\alpha)}$, the primal curvature tensor is given by
\begin{equation} \label{eqn:primal.curvature}
{R_{ijk}}^{\ell}(\xi) = \alpha \left( g_{ik}(\xi) \delta_j^{\ell} - g_{jk}(\xi) \delta_i^{\ell}\right).
\end{equation}
\item[(ii)] For $\mathbf{D}^{(-\alpha)}$, the primal curvature tensor is given by
\begin{equation} \label{eqn:primal.curvature2}
{R_{ijk}}^{\ell}(\xi) = -\alpha \left( g_{ik}(\xi) \delta_j^{\ell} - g_{jk}(\xi) \delta_i^{\ell}\right).
\end{equation}
\end{enumerate}
\end{proposition}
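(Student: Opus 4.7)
The plan is to prove the two curvature identities by direct substitution into the coordinate formula \eqref{eqn:RC.coefficients}, exploiting the remarkably simple form of the Christoffel symbols ${\Gamma_{ij}}^k$ that was obtained in \eqref{eqn:primal.connection2} (and its counterpart \eqref{eqn:convex.alpha.connection}). The fact that these symbols depend on $\varphi$ only through the gradient $D\varphi$ and contain Kronecker deltas in a simple symmetric pattern is what makes the calculation tractable; it is really the same computation that yields a constant-curvature model space in Riemannian geometry.

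Writing $\varphi_i = \partial_i \varphi$ and $\varphi_{ij} = \partial_i \partial_j \varphi$, I would set ${\Gamma_{ij}}^k = -\alpha(\varphi_i \delta_j^k + \varphi_j \delta_i^k)$ for the $L^{(\alpha)}$-case. First I would compute the derivative part
\[
\partial_i {\Gamma_{jk}}^{\ell} - \partial_j {\Gamma_{ik}}^{\ell} = -\alpha\bigl(\varphi_{ik}\delta_j^{\ell} - \varphi_{jk}\delta_i^{\ell}\bigr),
\]
where the Hessian entries $\varphi_{ij}=\varphi_{ji}$ cause the $\delta_k^{\ell}$ contributions to cancel. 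Next I would expand the quadratic terms ${\Gamma_{jk}}^m{\Gamma_{im}}^{\ell} - {\Gamma_{ik}}^m{\Gamma_{jm}}^{\ell}$, carefully carrying out the four Kronecker delta contractions in each product. Here I expect significant cancellation: the terms proportional to $\varphi_i\varphi_j \delta_k^{\ell}$ cancel between the two products, and the remaining surviving terms combine to $\alpha^2(\varphi_j\varphi_k\delta_i^{\ell}-\varphi_i\varphi_k\delta_j^{\ell})$. Adding the two contributions,
\[
{R_{ijk}}^{\ell} = -\alpha\,\delta_j^{\ell}\bigl(\varphi_{ik}+\alpha\varphi_i\varphi_k\bigr) + \alpha\,\delta_i^{\ell}\bigl(\varphi_{jk}+\alpha\varphi_j\varphi_k\bigr).
\]
Finally, invoking the explicit formula \eqref{eqn:concave.alpha.metric} which identifies $\varphi_{ik}+\alpha\varphi_i\varphi_k=-g_{ik}$ for the $L^{(\alpha)}$-metric yields exactly \eqref{eqn:primal.curvature}.

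For the $L^{(-\alpha)}$-case, the Christoffel symbols \eqref{eqn:convex.alpha.connection} have the same coordinate expression, so the computation of ${R_{ijk}}^{\ell}$ in terms of $\varphi_i$ and $\varphi_{ij}$ is literally identical. The difference comes only in the final step: by \eqref{eqn:convex.alpha.metric} one now has $\varphi_{ik}+\alpha\varphi_i\varphi_k=+g_{ik}$, which flips the overall sign and produces \eqref{eqn:primal.curvature2}. I do not anticipate any conceptual obstacle here; the only real risk is bookkeeping errors in the four-term expansion of the quadratic Christoffel products, so the one step I would carry out most carefully is keeping track of which indices are being summed in expressions like $\varphi_j\delta_k^m\cdot\varphi_m\delta_i^{\ell}=\varphi_j\varphi_k\delta_i^{\ell}$, making sure the symmetric and antisymmetric pieces are correctly identified before collecting terms.
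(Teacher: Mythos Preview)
Your proposal is correct and follows exactly the approach the paper indicates: a direct substitution of \eqref{eqn:primal.connection2} (respectively \eqref{eqn:convex.alpha.connection}) into the curvature formula \eqref{eqn:RC.coefficients}, followed by identification with the metric via \eqref{eqn:concave.alpha.metric} (respectively \eqref{eqn:convex.alpha.metric}). The paper in fact omits the details of this computation, so your write-up fills in precisely what the paper leaves to the reader.
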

\begin{proof}
This is a straightforward but lengthy computation using \eqref{eqn:primal.connection2}, \eqref{eqn:convex.alpha.connection} and \eqref{eqn:RC.coefficients}. We omit the details.
\end{proof}

\begin{definition} \label{def:constant.curvature}
When $\dim \mathcal{M} \geq 2$, a torsion-free connection has constant sectional curvature $k \in \mathbb{R}$ with respect to the metric $g$ if its curvature tensor $R$ satisfies the identity
\begin{equation} \label{eqn:constant.sectional.curvature}
R(X, Y)Z = k \left( \langle Y, Z \rangle X - \langle X, Z \rangle Y \right)
\end{equation}
for all vector fields $X$, $Y$ and $Z$.
\end{definition}

Using coordinate vector fields, we see that $R$ has constant curvature $k$ if and only if in any coordinate system we have
\begin{equation} \label{eqn:constant.curvature.def}
{R_{ijk}}^{\ell} = k \left( g_{jk} \delta_i^{\ell} - g_{ik} \delta_j^{\ell} \right).
\end{equation}
Comparing this with \eqref{eqn:primal.curvature} and \eqref{eqn:primal.curvature2}, we obtain

\begin{theorem} \label{thm:constant.curvature}
If $d = \dim \mathcal{M} \geq 2$, for $\mathbf{D}^{(\pm \alpha)}$ the primal and dual connections have constant sectional curvature $\mp \alpha$ with respect to the induced metric.
\end{theorem}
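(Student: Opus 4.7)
The plan is to read the theorem off the explicit curvature formulas \eqref{eqn:primal.curvature}--\eqref{eqn:primal.curvature2} that were just established, and then transfer the conclusion from the primal connection to the dual connection using the standard duality identity relating $R$ and $R^*$. For the primal case, I would simply rewrite
\[
{R_{ijk}}^{\ell}(\xi) = \pm \alpha \bigl( g_{ik}(\xi) \delta_j^{\ell} - g_{jk}(\xi) \delta_i^{\ell} \bigr) = \mp \alpha \bigl( g_{jk}(\xi) \delta_i^{\ell} - g_{ik}(\xi) \delta_j^{\ell} \bigr),
\]
and match this against the coordinate form \eqref{eqn:constant.curvature.def} of Definition \ref{def:constant.curvature} to read off $k = \mp \alpha$ for $\mathbf{D}^{(\pm\alpha)}$.

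For the dual connection, rather than redoing the Christoffel calculation, the cleanest route is to use the well-known duality identity for a torsion-free dual pair $(\nabla, \nabla^*)$ with respect to $g$, namely
\[
g(R(X,Y)Z, W) + g(Z, R^*(X,Y)W) = 0,
\]
which follows from differentiating \eqref{eqn:connection.duality} twice and symmetrizing. A short calculation then shows: if $R$ satisfies \eqref{eqn:constant.sectional.curvature} with constant $k$, so does $R^*$. Indeed, assuming $R(X,Y)Z = k(\langle Y,Z\rangle X - \langle X,Z\rangle Y)$, one computes
\[
g(R(X,Y)Z, W) = k\bigl(\langle Y,Z\rangle\langle X,W\rangle - \langle X,Z\rangle\langle Y,W\rangle\bigr),
\]
and this expression is skew in the pair $(Z,W)$, so $g(Z, R^*(X,Y)W) = k(\langle X,Z\rangle\langle Y,W\rangle - \langle Y,Z\rangle\langle X,W\rangle)$, which is exactly the coordinate-free statement that $R^*$ has constant sectional curvature $k$. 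Applying this with $k = \mp\alpha$ finishes the dual case.

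The main obstacle is not computational depth — the primal case reduces to matching an index pattern — but keeping the sign conventions consistent between Definition \ref{def:constant.curvature}, the two forms of the curvature \eqref{eqn:primal.curvature}--\eqref{eqn:primal.curvature2}, and the $\mp$ sign appearing in the theorem. The only nontrivial ingredient is the duality identity between $R$ and $R^*$, which is classical but should perhaps be stated explicitly (or the dual calculation redone along the lines of Section \ref{sec:L.geometry}) so that the reader is not left to verify the sign-preserving transfer of constant curvature from $\nabla$ to $\nabla^*$. The hypothesis $d \geq 2$ enters only to make Definition \ref{def:constant.curvature} nonvacuous, since in dimension one the antisymmetrization in $(i,j)$ forces ${R_{ijk}}^{\ell}$ to vanish identically.
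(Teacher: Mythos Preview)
Your proposal is correct and matches the paper's approach: the primal case is exactly the index-matching of \eqref{eqn:primal.curvature}--\eqref{eqn:primal.curvature2} against \eqref{eqn:constant.curvature.def}, and for the dual connection the paper likewise invokes the general fact that constant sectional curvature transfers from $\nabla$ to $\nabla^*$ (citing \cite[Proposition 8.1.4]{CU14} in the paragraph following the theorem). The only difference is that you supply a short self-contained proof of this transfer via the identity $g(R(X,Y)Z,W) + g(Z, R^*(X,Y)W) = 0$, whereas the paper outsources it to the literature; your version is therefore slightly more complete but otherwise identical in strategy.
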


It is known that for a general dualistic structure $(\mathcal{M}, g, \nabla, \nabla^*)$, $\nabla$ has constant sectional curvature $k$ with respect to $g$ if and only if the same statement holds for $\nabla^*$. For a proof see \cite[Proposition 8.1.4]{CU14}. Some general properties of dualistic structures with constant curvatures are given in \cite[Theorem 9.7.2]{CU14}. For example, it is known that such a manifold is conjugate symmetric, conjugate Ricci-symmetric, and the connections $\nabla$ and $\nabla^*$ are equiaffine. In Section \ref{sec:characterize.geometry}, we show that if we assume in addition that the manifold is dually projectively flat in the sense of Theorem \ref{prop:projective.flat}, then the geometry can be characterized elegantly in terms of the $L^{(\pm \alpha)}$-divergence.

\subsection{Generalized Pythagorean theorem}
The generalized Pythagorean theorem is a fundamental result of information geometry and has numerous applications in information theory, statistics and machine learning. It was first proved for the Bregman divergence ($\mathbf{D}^{(\pm \alpha)}$ in our context) which induces a dually flat geometry. For an exposition of this beautiful result see \cite[Chapter 1]{A16}. In \cite{PW14, PW16} we introduced the $L$-divergence ($L^{(\alpha)}$-divergence when $\alpha = 1$) on the unit simplex and proved the generalized Pythagorean theorem. In \cite{PW16} the proof is quite different from that of the Bregman case and is rather involved because there we expressed the dualistic structure in terms of the exponential coordinate system. Here we give a unified and simplified treatment covering all $L^{(\pm \alpha)}$-divergences.

\begin{theorem} \label{thm:pyth}
Consider any $L^{(\pm \alpha)}$-divergence $\mathbf{D} = \mathbf{D}^{(\pm \alpha)}$ and the induced dualistic structure. Let $p, q, r \in \Omega$ and suppose that the dual geodesic from $q$ to $p$ exists  (this amounts to say that the line segment between $\eta_q$ and $\eta_p$ lies in the dual domain $\Omega'$. We need this assumption because $\Omega'$ may not be convex). Then the generalized Pythagorean relation
\begin{equation} \label{eqn:pyth.relation}
\mathbf{D}\left[q : p \right]  + \mathbf{D}\left[r : q \right]  = \mathbf{D}\left[r : p \right]
\end{equation}
holds if and only if the primal geodesic from $q$ to $r$ and the dual geodesic from $q$ to $p$ meet $g$-orthogonally at $q$.
\end{theorem}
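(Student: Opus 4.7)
The natural strategy is to exploit the self-dual representation from Theorem~\ref{thm:alpha.duality} to convert both the Pythagorean relation and the orthogonality condition into identities involving only the cost function $c^{(\alpha)}$ and the four scalars $\xi_q\cdot\eta_q$, $\xi_r\cdot\eta_q$, $\xi_q\cdot\eta_p$, $\xi_r\cdot\eta_p$. I shall treat only the $\mathbf{D}^{(\alpha)}$ case since the $\mathbf{D}^{(-\alpha)}$ case is completely analogous (the signs in the self-dual expression flip but the final four-point identity is formally the same).

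First I would use \eqref{eqn:divergence.self.dual} to write
\[
\mathbf{D}[q{:}p]+\mathbf{D}[r{:}q]-\mathbf{D}[r{:}p]=c^{(\alpha)}(\xi_q,\eta_p)+c^{(\alpha)}(\xi_r,\eta_q)-c^{(\alpha)}(\xi_r,\eta_p)-\varphi(\xi_q)-\psi(\eta_q),
\]
and then invoke the Fenchel identity $\varphi(\xi_q)+\psi(\eta_q)=c^{(\alpha)}(\xi_q,\eta_q)$, which holds because $\eta_q=D^{(\alpha)}\varphi(\xi_q)$. After taking the exponential and multiplying by $\alpha$, the Pythagorean relation becomes the single algebraic identity
\[
(1+\alpha\,\xi_q\cdot\eta_p)(1+\alpha\,\xi_r\cdot\eta_q)=(1+\alpha\,\xi_q\cdot\eta_q)(1+\alpha\,\xi_r\cdot\eta_p).
\]

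For the geometric side, Theorem~\ref{prop:projective.flat} tells us that in the $\xi$-coordinates the primal geodesic from $q$ to $r$ traces the straight segment $[\xi_q,\xi_r]$, so its tangent at $q$ is proportional to $v:=\sum_i(\xi_r^i-\xi_q^i)\partial/\partial\xi^i$; likewise the dual geodesic from $q$ to $p$ traces $[\eta_q,\eta_p]$ in $\eta$-coordinates (this is where the convexity assumption on $[\eta_q,\eta_p]\subset\Omega'$ is used), giving tangent $w:=\sum_j(\eta_p^j-\eta_q^j)\partial/\partial\eta^j$ at $q$. I would then plug $v$ and $w$ into the mixed inner product formula \eqref{eqn:inner.product} of Proposition~\ref{prop:inner.product}; after clearing the positive prefactor $\Pi(\xi_q,\eta_q)^2$, the condition $\langle v,w\rangle_g=0$ reads
\[
-\bigl(1+\alpha\,\xi_q\cdot\eta_q\bigr)(\xi_r-\xi_q)\cdot(\eta_p-\eta_q)+\alpha\bigl(\eta_q\cdot(\xi_r-\xi_q)\bigr)\bigl(\xi_q\cdot(\eta_p-\eta_q)\bigr)=0.
\]

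The final step is purely algebraic: expanding both the displayed Pythagorean identity and the displayed orthogonality identity in terms of the four inner products $a=\xi_q\cdot\eta_q$, $b=\xi_r\cdot\eta_q$, $c=\xi_q\cdot\eta_p$, $d=\xi_r\cdot\eta_p$, both collapse to the same relation $(a+d-b-c)=\alpha(bc-ad)$, proving the equivalence. The main obstacle, if any, is purely bookkeeping in this last verification and in correctly tracking the off-diagonal ``$\alpha\eta^i\xi^j/\Pi^2$'' term of the mixed metric; the key conceptual point is that this extra term, which is absent in the dually flat case, is exactly the contribution that reconciles the $\log$-form of $c^{(\alpha)}$ with Euclidean orthogonality of the affine segments in $\xi$- and $\eta$-coordinates. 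For the $\mathbf{D}^{(-\alpha)}$ case one repeats the argument using Theorem~\ref{thm:duality.minus.alpha} and \eqref{eqn:inner.product2}, and the same four-point identity emerges.
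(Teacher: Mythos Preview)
Your proposal is correct and follows essentially the same route as the paper: reduce the Pythagorean relation via the self-dual representation and the Fenchel identity to the product identity $(1+\alpha\xi_q\cdot\eta_p)(1+\alpha\xi_r\cdot\eta_q)=(1+\alpha\xi_q\cdot\eta_q)(1+\alpha\xi_r\cdot\eta_p)$, express the orthogonality condition via Proposition~\ref{prop:inner.product} using the tangent directions $\xi_r-\xi_q$ and $\eta_p-\eta_q$ given by projective flatness, and then verify algebraically that the two conditions coincide. Your introduction of the four scalars $a,b,c,d$ and explicit reduction of both sides to $(a+d-b-c)=\alpha(bc-ad)$ is slightly more streamlined than the paper's presentation, which leaves the final equivalence as an exercise, but the argument is the same.
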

\begin{proof}
Again we consider the case of $\mathbf{D}^{(\alpha)}$; the proof for $\mathbf{D}^{(-\alpha)}$ is similar. By the self-dual representation (Theorem \ref{thm:alpha.duality}), we have
\begin{equation*}
\begin{split}
\mathbf{D}\left[ q : p \right] &= \frac{1}{\alpha} \log \left(1 + \alpha \xi_q \cdot \eta_p \right) - \varphi(\xi_q) - \psi(\eta_p)
\end{split}
\end{equation*}
and similarly for $\mathbf{D}\left[r : q \right]$ and $\mathbf{D} \left[ r : p \right]$. Using these expressions and the identity \eqref{eqn:alpha.conjugate}, we see that the Pythagorean relation \eqref{eqn:pyth.relation} holds if and only if
\[
\left(1 + \alpha \xi_q \cdot \eta_p\right) \left(1 + \alpha \xi_r \cdot \eta_q\right) = \left(1 + \alpha \xi_r \cdot \eta_p\right)\left(1 + \alpha \xi_q \cdot \eta_q\right).
\]
Expanding and simplifying, we have
\begin{equation} \label{eqn:alpha.pyth.proof}
(\xi_r - \xi_q) \cdot (\eta_p - \eta_q) = \alpha ( \xi_q \cdot \eta_p) (\xi_r \cdot \eta_q) - \alpha (\xi_r \cdot \eta_p) (\xi_q \cdot \eta_q).
\end{equation}

On the other hand, consider the primal geodesic from $q$ to $r$. By projective flatness, in primal coordinates, the initial velocity vector (given by the inverse exponential map and expressed using the primal coordinate system) is proportional to $\xi_r - \xi_q$. Similarly, in dual coordinates, the initial velocity of the dual geodesic from $q$ to $p$ is proportional to $\eta_p - \eta_q$. By Proposition \ref{prop:inner.product}, we see that the two geodesics are orthogonal at $q$ if and only if
\begin{equation*}
\begin{split}
0 &= \left\langle (\xi^i_r - \xi^i_q) \frac{\partial}{\partial \xi^i}, (\eta_p^j - \eta_q^j) \frac{\partial}{\partial \eta^j} \right\rangle \\
  &= \sum_{i, j} (\xi_r^i - \xi_q^i) (\eta_p^j - \eta_q^j) \left( \frac{-1}{\Pi} \delta_{ij} + \frac{\alpha}{\Pi^2} \eta_q^i \xi_q^j \right) \\
  &= \frac{-1}{\Pi} (\xi_r - \xi_q) \cdot (\eta_p - \eta_q) + \frac{\alpha}{\Pi^2} \left( \sum_i (\xi_r^i - \xi_q^i) \eta_q^i \right) \left( \sum_j (\eta_p^j - \eta_q^j) \xi_q^j \right),
\end{split}
\end{equation*}
where $\Pi = \Pi(\xi_q, \eta_q) = 1 + \alpha \xi_q \cdot \eta_q$. Rearranging, we have
\begin{equation*}
\begin{split}
& (1 + \alpha \xi_q \cdot \eta_q) (\xi_r \cdot \eta_p - \xi_r \cdot \eta_q - \xi_q \cdot \eta_p + \xi_q \cdot \eta_q)\\
&= \alpha (\xi_r \cdot \eta_q - \xi_q \cdot \eta_q) (\xi_q \cdot \eta_p - \xi_q \cdot \eta_q).
\end{split}
\end{equation*}
The proof is completed by verifying that this and \eqref{eqn:alpha.pyth.proof} are equivalent.
\end{proof}

\subsection{The $\alpha$-divergence} \label{sec:alpha.divergence}
In this subsection we specialize to the unit simplex
\[
\mathcal{S}_d = \{p = (p^0, p^1, \ldots, p^d) \in (0, 1)^{1 + d} : p^0 + \cdots + p^d = 1\}.
\]
The following definition is taken from \cite[(3.39)]{A16}. Also see \cite[Definition 2.9]{AJLS17}.

\begin{definition} [$\alpha$-divergence]
Let $\alpha \neq \pm 1$. The $\alpha$-divergence is defined by
\begin{equation} \label{eqn:alpha.divergence}
\mathbf{D}_{\alpha} \left[ p : q \right] = \frac{4}{1 - \alpha^2} \left(1 - \sum_{i= 0}^d (p^i)^{\frac{1 - \alpha}{2}} (q^i)^{\frac{1 + \alpha}{2}}\right), \quad p, q \in \mathcal{S}_d.
\end{equation}
\end{definition}

We also consider the discrete R\'{e}nyi divergence given by
\begin{equation} \label{eqn:discrete.Renyi}
\mathbf{D}_{\tilde{\alpha}}\left( p || q\right) = \frac{1}{\tilde{\alpha} - 1} \log \left( \sum_{i = 0}^d p_i^{\tilde{\alpha}} q_i^{1 - \tilde{\alpha}} \right).
\end{equation}
Note the difference in the notations to avoid confusion. The following elementary lemma (whose proof is omitted) shows that the two divergences are monotone transformations of each other.

\begin{lemma} \label{lem:alpha.Renyi.identity}
For $\tilde{\alpha} \in (0, 1) \cup (1, \infty)$, let $\alpha = 1 - 2\tilde{\alpha} \in (-\infty, -1) \cup (-1, 1)$. Then for $p, q \in \mathcal{S}_d$ we have
\begin{equation} \label{eqn:Renyi.alpha.identity}
\mathbf{D}_{\tilde{\alpha}}\left(p || q\right) = \frac{1}{\tilde{\alpha} - 1} \log \left( 1 + \tilde{\alpha} (\tilde{\alpha} - 1) \mathbf{D}_{\alpha} \left[ p : q \right]\right).
\end{equation}
\end{lemma}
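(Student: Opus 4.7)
The identity is essentially bookkeeping: I would just substitute $\alpha = 1 - 2\tilde{\alpha}$ into the definition \eqref{eqn:alpha.divergence} of $\mathbf{D}_\alpha[p:q]$ and verify that the argument of the logarithm in \eqref{eqn:Renyi.alpha.identity} reduces to the Rényi integrand $\sum_i (p^i)^{\tilde\alpha} (q^i)^{1-\tilde\alpha}$.

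The plan is as follows. First I would record the elementary identities that follow from $\alpha = 1-2\tilde\alpha$, namely
\[
\frac{1-\alpha}{2} = \tilde\alpha, \qquad \frac{1+\alpha}{2} = 1 - \tilde\alpha, \qquad 1 - \alpha^2 = (1-\alpha)(1+\alpha) = 4\tilde\alpha(1-\tilde\alpha),
\]
so in particular $\frac{4}{1-\alpha^2} = \frac{-1}{\tilde\alpha(\tilde\alpha-1)}$. Plugging these into \eqref{eqn:alpha.divergence} gives
\[
\mathbf{D}_\alpha[p:q] = \frac{-1}{\tilde\alpha(\tilde\alpha-1)}\Bigl(1 - \sum_{i=0}^d (p^i)^{\tilde\alpha}(q^i)^{1-\tilde\alpha}\Bigr).
\]

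Next I would rearrange this into
\[
1 + \tilde\alpha(\tilde\alpha - 1)\,\mathbf{D}_\alpha[p:q] = \sum_{i=0}^d (p^i)^{\tilde\alpha}(q^i)^{1-\tilde\alpha},
\]
which is exactly the quantity whose logarithm, divided by $\tilde\alpha - 1$, defines $\mathbf{D}_{\tilde\alpha}(p\|q)$ in \eqref{eqn:discrete.Renyi}. Taking $\frac{1}{\tilde\alpha - 1}\log(\cdot)$ of both sides then yields \eqref{eqn:Renyi.alpha.identity}.

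There is no real obstacle here; the only thing to keep track of is the sign, since $\tilde\alpha(\tilde\alpha-1)$ is positive on $(1,\infty)$ but negative on $(0,1)$, while $1 - \sum_i (p^i)^{\tilde\alpha}(q^i)^{1-\tilde\alpha}$ changes sign accordingly. One should also briefly note that the argument of the logarithm is strictly positive (being a sum of positive terms), so \eqref{eqn:Renyi.alpha.identity} is well defined for all $\tilde\alpha \in (0,1) \cup (1, \infty)$ and all $p, q \in \mathcal S_d$.
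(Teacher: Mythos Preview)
Your proposal is correct and is exactly the direct verification the paper has in mind; indeed the paper omits the proof entirely, calling the lemma elementary, and your substitution $\frac{1-\alpha}{2}=\tilde\alpha$, $\frac{1+\alpha}{2}=1-\tilde\alpha$, $\frac{4}{1-\alpha^2}=\frac{-1}{\tilde\alpha(\tilde\alpha-1)}$ followed by the rearrangement $1+\tilde\alpha(\tilde\alpha-1)\mathbf{D}_\alpha[p:q]=\sum_i (p^i)^{\tilde\alpha}(q^i)^{1-\tilde\alpha}$ is the natural way to fill in the details.
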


As in Example \ref{eg:simplex}, we may express $\mathbf{D}_{\tilde{\alpha}}\left(p || q\right)$ as the $L^{(\tilde{\alpha} - 1)}$-divergence of the $\mathcal{F}^{(\tilde{\alpha} - 1)}$-potential function when $\tilde{\alpha} \in (1, \infty)$, and as the  $L^{(-(1 - \tilde{\alpha}))}$-divergence of the $\mathcal{F}^{-(1 - \tilde{\alpha})}$-potential function when $ \tilde{\alpha} \in (0, 1)$.

Let $\alpha$ and $\tilde{\alpha}$ be as in Lemma \ref{lem:alpha.Renyi.identity}. Let $(g, \nabla, \nabla^*)$ be the dualistic structure induced on $\mathcal{S}_d$ by the $\alpha$-divergence $\mathbf{D}_{\alpha} \left[ p : q \right]$, and let $(\tilde{g}, \tilde{\nabla}, \tilde{\nabla}^*)$ be that induced by the R\'{e}nyi divergence $\mathbf{D}_{\tilde{\alpha}}\left(p || q\right)$. Since the two divergences are related by a monotone transformation, from \eqref{eqn:geometry.coefficients}, \eqref{eqn:Renyi.alpha.identity} and the chain rule we have $\tilde{g}_{ij} = \tilde{\alpha} g_{ij}$, $\tilde{\Gamma}_{ijk} = \tilde{\alpha} \Gamma_{ijk}$ and $\tilde{\Gamma}_{ijk}^* = \tilde{\alpha} \Gamma_{ijk}^*$. (Here the multiplier $\tilde{\alpha}$ is the derivative of the transformation \eqref{eqn:Renyi.alpha.identity} at $0$.) This implies that ${\Gamma_{ij}}^k = {\tilde{\Gamma}_{ij}}\mathstrut^{k}$ and ${\Gamma_{ij}^*}\mathstrut^{k} = {\tilde{\Gamma}_{ij}^*}\mathstrut^{k}$, so the two dualistic structures have the same primal and dual geodesics and the same primal and dual curvature tensors (see \eqref{eqn:RC.coefficients}). 

\begin{theorem}
For $\alpha = 1 - 2\tilde{\alpha} \in (-\infty, -1) \cup (-1, 1)$, the statistical manifold $(\mathcal{S}_d, g, \nabla, \nabla^*)$ induced by the $\alpha$-divergence \eqref{eqn:alpha.divergence} is dually projectively flat with constant curvature $\frac{1 - \alpha^2}{4}$.
\end{theorem}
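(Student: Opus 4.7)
The plan is to deduce the result from what has already been established for the $L^{(\pm \alpha')}$-divergences, using Lemma~\ref{lem:alpha.Renyi.identity} to transfer the geometry of the Rényi divergence to the $\alpha$-divergence.

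First, I would invoke Example~\ref{eg:simplex} to parametrize $\mathcal{S}_d$ as an $\mathcal{F}^{(\alpha')}$-family when $\tilde{\alpha} > 1$ (with $\alpha' = \tilde{\alpha}-1 > 0$) and as an $\mathcal{F}^{(-\alpha')}$-family when $0 < \tilde{\alpha} < 1$ (with $\alpha' = 1 - \tilde{\alpha} \in (0,1)$). In both cases Theorem~\ref{thm:renyi} identifies the discrete Rényi divergence $\mathbf{D}_{\tilde{\alpha}}(\cdot\,\|\,\cdot)$ on $\mathcal{S}_d$ with an $L^{(\pm \alpha')}$-divergence of an $\alpha'$-exponentially concave/convex potential. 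Thus Theorem~\ref{prop:projective.flat} applies, showing that the Rényi structure $(\tilde{g},\tilde{\nabla},\tilde{\nabla}^*)$ is dually projectively flat, while Theorem~\ref{thm:constant.curvature} furnishes constant sectional curvature $\tilde{k} = -\alpha' = 1-\tilde{\alpha}$ in the first case and $\tilde{k} = +\alpha' = 1-\tilde{\alpha}$ in the second case. In both cases $\tilde{k} = 1-\tilde{\alpha}$.

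Next I would use the remarks made just before the theorem statement: since $\mathbf{D}_{\tilde{\alpha}}$ is a smooth monotone function of $\mathbf{D}_\alpha$ with derivative $\tilde{\alpha}$ at zero (as recorded in \eqref{eqn:Renyi.alpha.identity}), the chain rule applied to \eqref{eqn:geometry.coefficients} yields $\tilde{g}_{ij} = \tilde{\alpha} g_{ij}$ and $\tilde{\Gamma}_{ijk} = \tilde{\alpha} \Gamma_{ijk}$, hence ${\tilde{\Gamma}_{ij}}\mathstrut^{k} = {\Gamma_{ij}}^k$ and analogously in the dual. Therefore $\nabla = \tilde{\nabla}$ and $\nabla^* = \tilde{\nabla}^*$ as affine connections, and in particular dual projective flatness transfers verbatim from the Rényi structure to the $\alpha$-divergence structure.

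For the curvature, the two structures share the Riemann--Christoffel coefficients $R_{ijk}{}^\ell = \tilde{R}_{ijk}{}^\ell$ (these depend only on the ${\Gamma_{ij}}^k$ through \eqref{eqn:RC.coefficients}). Writing the constant-curvature identity \eqref{eqn:constant.curvature.def} for $\tilde{\nabla}$ and substituting $\tilde{g}_{ij} = \tilde{\alpha}g_{ij}$ gives
\[
R_{ijk}{}^{\ell} = \tilde{R}_{ijk}{}^{\ell} = \tilde{k}\,(\tilde{g}_{jk}\delta_i^{\ell} - \tilde{g}_{ik}\delta_j^{\ell}) = \tilde{\alpha}\tilde{k}\,(g_{jk}\delta_i^{\ell} - g_{ik}\delta_j^{\ell}),
\]
so $\nabla$ has constant sectional curvature $k = \tilde{\alpha}\tilde{k} = \tilde{\alpha}(1-\tilde{\alpha})$. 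Since $\tilde{\alpha} = (1-\alpha)/2$, this yields $k = \frac{1-\alpha}{2}\cdot\frac{1+\alpha}{2} = \frac{1-\alpha^2}{4}$, completing the proof. The only mild subtlety is the bookkeeping across the two ranges $\tilde{\alpha} \in (0,1)$ and $\tilde{\alpha} > 1$, where the Rényi divergence is realized by $L^{(-\alpha')}$- and $L^{(\alpha')}$-divergences respectively; the sign flip in Theorem~\ref{thm:constant.curvature} precisely cancels the case distinction, and the final formula is uniform in $\alpha \in (-\infty,-1) \cup (-1,1)$.
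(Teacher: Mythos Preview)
Your proposal is correct and follows essentially the same approach as the paper: identify the R\'{e}nyi divergence on $\mathcal{S}_d$ as an $L^{(\pm\alpha')}$-divergence via Example~\ref{eg:simplex} and Theorem~\ref{thm:renyi}, invoke Theorems~\ref{prop:projective.flat} and~\ref{thm:constant.curvature} for the R\'{e}nyi structure, and then transfer to the $\alpha$-divergence structure using $\tilde g=\tilde\alpha g$, $\tilde R=R$. Your unified bookkeeping $\tilde k=1-\tilde\alpha$ across both ranges is a slight streamlining of the paper's case-by-case computation, but the argument is otherwise identical.
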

\begin{proof}
We only indicate how the curvature is determined, as the rest follows immediately from other results of this section. We have $\tilde{\alpha} = \frac{1 - \alpha}{2}$. Suppose $\alpha \in (-\infty, -1)$. The R\'{e}nyi divergence $\mathbf{D}_{\tilde{\alpha}}\left(p || q\right)$ is then an $L^{(\alpha')}$-divergence where $\tilde{\alpha} = 1 + \alpha' \in (1, \infty)$. By Theorem \ref{thm:constant.curvature}, we have
\[
\tilde{R}(X, Y)Z = -\alpha' \left( \tilde{g}(Y, Z) - \tilde{g}(X, Z) Y \right).
\]
But $R = \tilde{R}$ and $\tilde{g} = \tilde{\alpha} g$. So we have
\[
R(X, Y)Z = -\alpha' \tilde{\alpha} \left( g(Y, Z) - g(X, Z) \right) = \frac{1 - \alpha^2}{4} \left( g(Y, Z) - g(X, Z) \right).
\]
This shows that the curvature is $\frac{1 - \alpha^2}{4}$. The case $\alpha \in (-1, 1)$ is similar.
\end{proof}

In this context, our geometry reduces to the geometry of the $\alpha$-divergence, and Theorem \ref{thm:pyth} implies the corresponding Pythagorean theorem. See in particular \cite[p.179]{AN02} where the curvature $\frac{1 - \alpha^2}{4}$ is determined using affine differential geometry. Their (8.67) is the product form of our \eqref{eqn:pyth.relation} for the R\'{e}nyi divergence \eqref{eqn:Renyi.alpha.identity}. Thus, our results provide a new approach to study the geometry (compare with \cite[Section 3.6]{AN02}). We plan to carry out a deeper study in future papers.

\section{Dually projectively flat manifolds with constant curvatures} \label{sec:characterize.geometry}
Using the expressions given in \eqref{eqn:geometry.coefficients}, any divergence $\mathbf{D}\left[ \cdot : \cdot \right]$ on a manifold $\mathcal{M}$ defines a dualistic structure $(\mathcal{M}, g, \nabla, \nabla^*)$. This operation is not one-to-one; it is easy to construct examples where two different divergences induce the same dualistic structure on $\mathcal{M}$. An interesting question is whether we can single out a divergence which is in some sense most natural  for a given dualistic structure. If so, we call it a canonical divergence. For dually flat manifolds, it is possible to show that there is a canonical divergence of Bregman type (see \cite[Section 6.6]{A16}). Thus, a dualistic structure is dually flat if and only if it is (locally) induced by a Bregman divergence.

In this section we ask and solve the same question where the manifold is dually projectively flat with constant curvature. We assume that the manifold $\mathcal{M}$ has dimension $d \geq 2$, and both $\nabla$ and $\nabla^*$ are projectively flat and have constant sectional curvature $\pm \alpha$ where $\alpha > 0$. (Note that if $\alpha = 0$ the manifold is dually flat and we reduce to the classical setting.) Our aim is to characterize the geometry based on these properties. The key idea is to use these properties to extend in a novel way the proofs of \cite[Theorem 6.2]{A16} and \cite[Theorem 4.3]{AJLS17} which address the dually flat case. Using tools of affine differential geometry, an alternative characterization is given in \cite[Corollary 1]{K90}.

\subsection{Dual coordinates and potential functions}

\begin{theorem} \label{thm:projectively.flat.characterize}
Consider a statistical manifold $(\mathcal{M}, g, \nabla, \nabla^*)$ such that both $\nabla$ and $\nabla^*$ are projectively flat and have constant negative sectional curvature $-\alpha$ where $\alpha > 0$. Then, near each point of $\mathcal{M}$, there exist local coordinate systems $\xi$ and $\eta$ such that the following statements hold:
\begin{enumerate}
\item[(i)] The primal geodesics are straight lines in the $\xi$ coordinates up to time reparameterizations, and the dual geodesics are straight lines in the $\eta$ coordinates up to time reparameterizations.
\item[(ii)] The coordinates satisfy the constraint $1 + \alpha \sum_{\ell = 1}^d \xi^{\ell} \eta^{\ell} > 0$.
\item[(iii)] There exist local $\alpha$-exponentially concave functions $\varphi(\xi)$ and $\psi(\eta)$ such that the generalized Fenchel identity
\begin{equation} \label{eqn:projectively.flat.fenchel}
\varphi(\xi) + \psi(\eta) \equiv c^{(\alpha)}(\xi, \eta) = \frac{1}{\alpha} \log \left(1 + \alpha \xi \cdot \eta \right)
\end{equation}
holds true. %This means that $- D^2 \varphi - \alpha (D \varphi)(D\varphi)^{\top} > 0$ and similarly for $\psi$.
\item[(iv)] Under the coordinate systems $\xi$ and $\eta$ respectively, the Riemannian metric is given by
\begin{equation} \label{eqn:projectively.flat.metric}
\begin{split}
g_{ij}(\xi) &= -\frac{\partial^2 \varphi}{\partial \xi^i \partial \xi^j} (\xi) - \alpha \frac{\partial \varphi}{\partial \xi^i}(\xi) \frac{\partial \varphi}{\partial \xi^j}(\xi), \\
g_{ij}(\eta) &=  -\frac{\partial^2 \psi}{\partial \eta^i \partial \eta^j} (\eta) - \alpha \frac{\partial \psi}{\partial \eta^i}(\eta) \frac{\partial \psi}{\partial \eta^j}(\eta).
\end{split}
\end{equation}
\end{enumerate}
If the constant curvature is $+\alpha > 0$, analogous statements hold where the functions $\varphi$ and $\psi$ in (iii) are $\alpha$-exponentially convex, and in \eqref{eqn:projectively.flat.metric} the negative signs are replaced by positive signs.
\end{theorem}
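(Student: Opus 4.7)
The strategy is to use projective flatness of $\nabla$ together with the constant-curvature hypothesis to construct, in affine coordinates $\xi$ for the flat connection projectively equivalent to $\nabla$, an $\alpha$-exponentially concave potential $\varphi$ whose $L^{(\alpha)}$-divergence reproduces the given primal geometry. Everything else---the dual coordinate $\eta$, the conjugate $\psi$, and the generalized Fenchel identity---then falls out of the duality theory developed in Section~\ref{sec:alpha.duality}.

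\textbf{Step 1 (projectively equivalent flat connection).} Locally $\nabla$ is projectively equivalent to a flat connection $\widetilde{\nabla}$; picking affine coordinates $\xi$ for $\widetilde{\nabla}$ we have $\widetilde{\Gamma}_{ij}\mathstrut^{k} \equiv 0$, and \eqref{eqn:projective.equivalent.condition} yields a local $1$-form $\tau = a_i \, d\xi^i$ with
\[
{\Gamma_{ij}}^{k}(\xi) = a_i(\xi)\,\delta_j^{k} + a_j(\xi)\,\delta_i^{k},
\]
so $\nabla$-geodesics are straight lines in $\xi$ up to reparameterization.

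\textbf{Step 2 (curvature forces the potential).} Substituting into \eqref{eqn:RC.coefficients} and expanding gives
\[
{R_{ijk}}^{\ell} = (\partial_i a_j - \partial_j a_i)\,\delta_k^{\ell} + (\partial_i a_k - a_i a_k)\,\delta_j^{\ell} - (\partial_j a_k - a_j a_k)\,\delta_i^{\ell}.
\]
Matching against \eqref{eqn:constant.curvature.def} with sectional curvature $-\alpha$ forces (a) $d\tau = 0$ from the $\delta_k^{\ell}$ coefficient, and (b) the scalar identity $\partial_i a_k - a_i a_k = \alpha\, g_{ik}$ from either of the other two (the symmetry of $g$ makes them equivalent). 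By the Poincar\'{e} lemma, write $\tau = d(-\alpha\varphi)$ locally; then $a_i = -\alpha\,\partial_i\varphi$ and (b) becomes
\[
g_{ij}(\xi) = -\partial_i \partial_j \varphi - \alpha\, \partial_i \varphi\, \partial_j \varphi,
\]
which is the first line of \eqref{eqn:projectively.flat.metric}. Positive-definiteness of $g$ combined with Lemma~\ref{lem:exp.concave.convex}(i) shows $\varphi$ is $\alpha$-exponentially concave; after translating $\xi$ if necessary Condition~\ref{condition:concave} holds.

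\textbf{Step 3 (dual side and assembly).} Set $\eta := D^{(\alpha)}\varphi(\xi)$ and $\psi(\eta) := c^{(\alpha)}(\xi,\eta) - \varphi(\xi)$, so \eqref{eqn:projectively.flat.fenchel} holds by construction. The coordinate formulas of Section~\ref{sec:L.geometry} for the $L^{(\alpha)}$-divergence of $\varphi$ yield exactly the $g$ and ${\Gamma_{ij}}^{k}$ found in Step~2; since $\nabla^*$ is uniquely determined by $g$ and $\nabla$ through \eqref{eqn:connection.duality}, the dualistic structure induced by this divergence coincides with the given one. Theorem~\ref{prop:alpha.duality} now supplies the rest: $\xi \mapsto \eta$ is a diffeomorphism onto an open set, $1 + \alpha\, \xi \cdot \eta > 0$ via \eqref{eqn:nice.identity} (giving (ii)), $\psi$ is locally $\alpha$-exponentially concave with the dual-coordinate metric formula, and Theorem~\ref{prop:projective.flat} applied to this divergence ensures $\nabla^*$-geodesics are straight lines in $\eta$ up to reparameterization, completing (i).

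\textbf{The $+\alpha$ case and main difficulty.} The positive-curvature case is handled identically: the sign flip in Step~2 gives $\partial_i a_k - a_i a_k = -\alpha\, g_{ik}$, the same ansatz $a_i = -\alpha\,\partial_i\varphi$ makes $\varphi$ $\alpha$-exponentially convex via Lemma~\ref{lem:exp.concave.convex}(ii), and Theorem~\ref{thm:duality.minus.alpha} replaces Theorem~\ref{prop:alpha.duality} in Step~3. I expect the main technical hurdle to sit in Step~2: doing the curvature expansion cleanly and isolating, from the $\delta^{\ell}$-decomposition, the precise pair (closedness of $\tau$, scalar relation for $g$) that characterizes the $L^{(\pm\alpha)}$-geometry; once that algebraic extraction is done, the passage to $\varphi$ and the rest of the proof are essentially a bookkeeping exercise built on results already established in the paper.
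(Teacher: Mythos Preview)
Your proposal is correct and follows essentially the same route as the paper: choose affine coordinates for the flat connection projectively equivalent to $\nabla$, expand ${R_{ijk}}^{\ell}$ from ${\Gamma_{ij}}^{k}=a_i\delta_j^k+a_j\delta_i^k$, match against the constant-curvature form to extract closedness of $\tau$ and the relation $\alpha g_{ij}=\partial_i a_j-a_i a_j$, integrate to get $\varphi$ with $a_i=-\alpha\,\partial_i\varphi$, and then invoke the $\alpha$-duality machinery of Section~\ref{sec:alpha.duality} to produce $\eta$, $\psi$, the Fenchel identity, and the dual statements. The paper's proof is organized the same way and with the same key identities, so there is nothing to add.
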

\begin{proof}
Let the curvature be $-\alpha < 0$ (the proof for positive curvature is similar). Consider the primal connection $\nabla$ which is projectively flat. By Definition \ref{def:projective.flatness}, there exists a flat connection $\widetilde{\nabla}$ and a $1$-form $\tau$ such that \eqref{eqn:projective.equivalent} holds. As $\widetilde{\nabla}$ is flat, near each point there exists an affine coordinate system $\xi = (\xi^1, \ldots, \xi^d)$ under which the $\widetilde{\nabla}$-geodesics are constant velocity straight lines. By projective equivalence, the primal geodesics are straight lines under the $\xi$ coordinates up to time reparameterizations.  Furthermore, if we write $\tau = a_i(\xi) d\xi^i$ where the $a_i(\xi)$'s are smooth functions of $\xi$, by \eqref{eqn:projective.equivalent.condition} we have
\begin{equation} \label{eqn:Christoffel.projectively.flat}
{\Gamma_{ij}}^k(\xi) = a_i(\xi) \delta_j^k + a_j(\xi) \delta_i^k.
\end{equation}

Motivated by our previous result \eqref{eqn:1-form}, we want to show that
\begin{equation} \label{eqn:differential.form.claim}
a_i(\xi) = -\alpha \frac{\partial \varphi}{\partial \xi^i}(\xi)
\end{equation}
for some $\alpha$-exponentially concave function $\varphi$. To this end we will use the assumption that $\nabla$ has constant sectional curvature $-\alpha$ with respect to $g$.

Using the representation \eqref{eqn:Christoffel.projectively.flat} of the primal Christoffel symbols, we compute
\begin{equation} \label{eqn:R.computation}
\begin{split}
{R_{ijk}}^{\ell}(\xi) &= \partial_i {\Gamma_{jk}}^{\ell} - \partial_j {\Gamma_{ik}}^{\ell} + {\Gamma_{jk}}^m {\Gamma_{im}}^{\ell} - {\Gamma_{ik}}^m {\Gamma_{jm}}^{\ell} \\
&= (\partial_i a_j - \partial_j a_i) \delta_k^{\ell} + (\partial_i a_k - a_k a_i ) \delta_j^{\ell} - (\partial_j a_k - a_k a_j) \delta_i^{\ell}.
\end{split}
\end{equation}
On the other hand, since $\nabla$ has constant curvature $-\alpha$, by \eqref{eqn:constant.curvature.def} we have
\begin{equation} \label{eqn:R.form}
{R_{ijk}}^{\ell}(\xi) = \alpha \left( g_{ik}(\xi) \delta_j^{\ell} - g_{jk}(\xi) \delta_i^{\ell}\right).
\end{equation}
Equating \eqref{eqn:R.computation} and \eqref{eqn:R.form} and using the assumption that $\dim M = d \geq 2$ (so that we can pick different indices), we see that
\begin{equation} \label{eqn:closed.form}
\frac{\partial a_j}{\partial \xi^i} (\xi) = \frac{\partial a_i}{\partial \xi^j} (\xi)
\end{equation}
and
\begin{equation} \label{eqn:g.expression}
\alpha g_{ij}(\xi) = \partial_i a_j(\xi) - a_i(\xi) a_j(\xi).
\end{equation}

By \eqref{eqn:closed.form}, the $1$-form $\tau = a_i(\xi) d\xi^i$ is closed. So, locally, it is exact. Thus, there exists a locally defined function $\varphi(\xi)$ such that our claim \eqref{eqn:differential.form.claim} holds. In particular, the $1$-form $\tau$ is locally given by $\tau(X) = -\alpha X \varphi$.

Now we may write the metric \eqref{eqn:g.expression} in the form
\begin{equation} \label{eqn:g.expression.new}
g_{ij}(\xi) = -\frac{\partial^2 \varphi}{\partial \xi^i \partial \xi^j} (\xi) - \alpha \frac{\partial \varphi}{\partial \xi^i}(\xi) \frac{\partial \varphi}{\partial \xi^j}(\xi).
\end{equation}
Since the matrix $\left( g_{ij}(\xi) \right)$ is strictly positive definite, from \eqref{eqn:Phi.D2} we have that $D^2 e^{\alpha \varphi} < 0$. In particular, $\varphi$ is locally $\alpha$-exponentially concave.

Without loss of generality we may assume $0$ belongs to the domain of $\varphi$. This implies (see \eqref{eqn:L.divergence.motivation}) that
\[
1 - \alpha D\varphi(\xi) \cdot \xi > 0
\]
and the $\alpha$-gradient
\begin{equation} \label{eqn:projectively.flat.dual.variable}
\eta := \nabla^{(\alpha)} \varphi(\xi) = \frac{1}{1 - \alpha D\varphi(\xi) \cdot \xi} \nabla \varphi (\xi)
\end{equation}
is well-defined. As in \eqref{eqn:nice.identity} we have $ 1+ \alpha \xi \cdot \eta > 0$.

With the dual coordinate $\eta$ defined by \eqref{eqn:projectively.flat.dual.variable}, we define the dual function $\psi(\eta)$ by the generalized Fenchel identity:
\begin{equation} \label{eqn:dual.alpha.function}
\psi(\eta) = c^{(\alpha)}(\xi, \eta) - \varphi(\xi).
\end{equation}
By $\alpha$-duality $\psi$ is locally $\alpha$-exponentially concave, and the Fenchel identity \eqref{eqn:projectively.flat.fenchel} holds by construction.

Since the dual connection $\nabla^*$ is uniquely determined given $g$ and $\nabla$ (see \cite[(6.6)]{A16}), with the results in Section \ref{sec:L.geometry} it is a now routine exercise to verify that $\eta$ is an affine coordinate system for the projectively flat dual connection $\nabla^*$, and the metric is also given by the second formula in \eqref{eqn:projectively.flat.metric}. This completes the proof of the theorem.

\end{proof}

\subsection{Canonical divergence}
Using Theorem \ref{thm:projectively.flat.characterize}, we can define locally a canonical divergence which generalizes the one for a dually flat manifold. It is interesting to know whether this canonical divergence coincides with the one defined by Ay and Amari in \cite{AA15} for a generic dualistic structure.

\begin{theorem} [canonical divergence] \label{thm:canonical.divergence}
Consider the setting of Theorem \ref{thm:projectively.flat.characterize} where $\alpha > 0$. Let $\xi$, $\eta$, $\varphi$ and $\psi$ be given as in the theorem.
\begin{enumerate}
\item[(i)] If the curvature is $-\alpha < 0$, we define the local $L^{(\alpha)}$-divergence
\begin{equation} \label{eqn:alpha.canonical.divergence}
\mathbf{D} \left[ q : p \right] = \frac{1}{\alpha} \log \left(1 + \alpha \xi_q \cdot \eta_p\right) - \varphi(\xi_q) - \psi(\eta_p).
\end{equation}
\item[(ii)] If the curvature is $\alpha > 0$, we define the local $L^{(-\alpha)}$-divergence
\begin{equation} \label{eqn:alpha.canonical.divergence2}
\mathbf{D} \left[ q : p \right] = \varphi(\xi_q) + \psi(\eta_p) - \frac{1}{\alpha} \log \left(1 + \alpha \xi_q \cdot \eta_p\right).
\end{equation}
\end{enumerate}
Then these divergences are independent of the choices of $\xi$, $\eta$, $\varphi$ and $\psi$. Furthermore, the canonical divergence induces the given Riemannian metric and the primal and dual connections.
\end{theorem}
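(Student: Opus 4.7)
The plan is to split the theorem into two claims: (a) the divergence \eqref{eqn:alpha.canonical.divergence}--\eqref{eqn:alpha.canonical.divergence2} induces the prescribed dualistic structure $(g, \nabla, \nabla^*)$, and (b) it is independent of the choice of data $(\xi, \eta, \varphi, \psi)$ provided by Theorem \ref{thm:projectively.flat.characterize}. I will focus on the curvature $-\alpha < 0$ case; the positive-curvature case is parallel, with Theorem \ref{thm:duality.minus.alpha} replacing Theorem \ref{thm:alpha.duality}.

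For part (a), I would use Theorem \ref{thm:alpha.duality} to identify \eqref{eqn:alpha.canonical.divergence}, written in the coordinate $\xi$, with the $L^{(\alpha)}$-divergence of the potential $\varphi$ on its chart. The general computation of Section \ref{sec:L.geometry} then gives the induced metric \eqref{eqn:concave.alpha.metric} and primal Christoffel symbols \eqref{eqn:primal.connection2}. By the construction inside the proof of Theorem \ref{thm:projectively.flat.characterize} (cf.\ \eqref{eqn:projectively.flat.metric} and \eqref{eqn:Christoffel.projectively.flat}--\eqref{eqn:differential.form.claim}), these formulas agree with the coordinate expressions of the prescribed $g$ and $\nabla$. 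The dual connection $\nabla^*$ then matches automatically by the uniqueness of the $g$-dual of $\nabla$ via \eqref{eqn:connection.duality}.

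For part (b), I would first dispatch the additive-constant ambiguity: replacing $\varphi$ by $\varphi + c$ leaves $D\varphi$ and therefore $\eta = D^{(\alpha)}\varphi(\xi)$ unchanged, while the Fenchel identity \eqref{eqn:projectively.flat.fenchel} forces $\psi \mapsto \psi - c$; the right-hand side of \eqref{eqn:alpha.canonical.divergence} is then invariant. Moreover, in a fixed $\xi$-chart this is the full ambiguity: if $(\varphi_1, \psi_1)$ and $(\varphi_2, \psi_2)$ both satisfy the hypotheses, then the coordinate formula ${\Gamma_{ij}}^k = -\alpha(\partial_i\varphi\,\delta_j^k + \partial_j\varphi\,\delta_i^k)$ derived in part (a), applied to both sets of data, yields $D\varphi_1 = D\varphi_2$, so $\varphi_1 - \varphi_2$ is constant and the divergence is unchanged.

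The main obstacle is handling a genuine change of the $\xi$-coordinate, coming from a different choice of the projectively equivalent flat connection $\widetilde\nabla$ in the proof of Theorem \ref{thm:projectively.flat.characterize}; the transitions between such coordinates form a subgroup of the projective group of $\mathbb{R}^d$. I would verify directly that the $\alpha$-gradient \eqref{eqn:alpha.gradient}, the cost $c^{(\alpha)}$, and the Fenchel identity \eqref{eqn:projectively.flat.fenchel} all transform covariantly under this action: concretely, the product $1 + \alpha\,\tilde\xi\cdot\tilde\eta$ splits as $(1+\alpha\xi\cdot\eta)$ times two factors depending separately on $\xi$ and on $\eta$, and these factors are absorbed exactly by the induced transformations of $\varphi$ and $\psi$, leaving the scalar \eqref{eqn:alpha.canonical.divergence} invariant at each pair $(q, p) \in \mathcal{M}\times \mathcal{M}$. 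This last covariance calculation is the technical heart of the argument.
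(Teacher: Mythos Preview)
Your proposal is correct and in fact more careful than the paper's own proof. The paper dispatches the theorem in three sentences: it asserts that the primal affine coordinate $\xi$ is unique up to an \emph{affine} transformation, notes that $\eta$, $\varphi$, $\psi$ are then determined by \eqref{eqn:projectively.flat.dual.variable} and \eqref{eqn:dual.alpha.function}, and says ``by a direct computation'' the divergence is invariant. The claim that the divergence induces the given $(g,\nabla,\nabla^*)$ is not argued separately there; it is implicit from the computations of Section~\ref{sec:L.geometry}, exactly as you propose in part~(a).

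The substantive difference is in the coordinate freedom. You correctly observe that Theorem~\ref{thm:projectively.flat.characterize} starts by choosing \emph{some} flat connection $\widetilde\nabla$ projectively equivalent to $\nabla$, and that two such flat representatives have affine charts related by a genuine projective (not merely affine) transformation. The paper's assertion that $\xi$ is unique up to affine maps thus understates the ambiguity, and your plan to check covariance of $c^{(\alpha)}$, the $\alpha$-gradient, and the Fenchel identity under the full projective action is the honest version of the ``direct computation'' the paper alludes to. Your factorization heuristic for $1+\alpha\,\tilde\xi\cdot\tilde\eta$ is the right mechanism; carrying it out for a general fractional-linear change $\tilde\xi = (A\xi + b)/(c^\top\xi + d)$ and tracking the induced transformation of $\eta$ via \eqref{eqn:alpha.gradient} is tedious but routine. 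In short, the paper and you take the same route, but you have identified the step that actually needs work.
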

\begin{proof}
Note that the primal affine coordinate system $\xi$ is unique up to an affine transformation. The rest are then determined by \eqref{eqn:projectively.flat.dual.variable} and \eqref{eqn:dual.alpha.function}. By a direct computation, we can verify that \eqref{eqn:alpha.canonical.divergence} and \eqref{eqn:alpha.canonical.divergence2} remain invariant.
\end{proof}

\begin{acknowledgements}
Some of the results were obtained when the author was teaching a PhD topics course on optimal transport and information geometry at the University of Southern California. He thanks his students for their interest and feedbacks. He also thanks John Man-shun Ma, Jeremy Toulisse and Soumik Pal for helpful discussions. He is grateful to Shun-ichi Amari for his insightful comments and for pointing out the connection with the $\alpha$-divergence. Finally, he thanks the anonymous referees for their helpful comments which improved the paper.
\end{acknowledgements}

\bibliographystyle{spmpsci}      % mathematics and physical sciences
\bibliography{infogeo.new}   % name your BibTeX data base

\end{document}